\def\tOmega{\widetilde{\Omega}}
\def\im{\mathop\mathrm{Im}}
\newcommand{\starsymbol}{\diamond}
\newcommand{\Eulerop}{\EE}
\newtheorem{theorem}{Theorem}[section]
\newtheorem{definition}[theorem]{Definition}
\newtheorem*{definition*}{Definition}
\newtheorem{proposition}[theorem]{Proposition}
\newtheorem{lemma}[theorem]{Lemma}
\newtheorem{remark}[theorem]{Remark}
\newtheorem*{remark*}{Remark}
\newtheorem*{remarks*}{Remarks}
\newtheorem{corollary}[theorem]{Corollary}
\newtheorem*{notation*}{Notation}
\newtheorem*{ex*}{Example}
\newtheorem*{exs*}{Examples}
\newtheorem*{app*}{Application}
\newtheorem{conjecture*}{Conjecture}
\def\ts{\thinspace}
\title{
The aromatic bicomplex for the description of divergence-free aromatic forms and volume-preserving integrators
}
\author{
Adrien Laurent\textsuperscript{1}, Robert I.\ts McLachlan\textsuperscript{2}, Hans Z.\ts Munthe-Kaas\textsuperscript{1}, and Olivier Verdier\textsuperscript{3}
}
\begin{document}
\footnotetext[1]{
Department of Mathematics, University of Bergen, Bergen, Norway.\\
adrien.laurent@uib.no. hans.munthe-kaas@uib.no.}
\footnotetext[2]{
Institute of Fundamental Sciences, Massey University, Palmerston North, New Zealand.\\
r.mclachlan@massey.ac.nz.}
\footnotetext[3]{
Department of Computing, Electrical Engineering and Mathematical Sciences,Western Norway University of Applied Sciences, Bergen, Norway.\\
olivier.verdier@hvl.no.}

\maketitle

\begin{abstract}
Aromatic B-series were introduced as an extension of standard Butcher-series for the study of volume-preserving integrators.
It was proven with their help that the only volume-preserving B-series method is the exact flow of the differential equation.
The question was raised whether there exists a volume-preserving integrator that can be expanded as an aromatic B-series.
In this work, we introduce a new algebraic tool, called the aromatic bicomplex, similar to the variational bicomplex in variational calculus.
We prove the exactness of this bicomplex and use it to describe explicitly the key object in the study of volume-preserving integrators: the aromatic forms of vanishing divergence.
The analysis provides us with a handful of new tools to study aromatic B-series, gives insights on the process of integration by parts of trees, and allows to describe explicitly the aromatic B-series of a volume-preserving integrator. In particular, we conclude that an aromatic Runge-Kutta method cannot preserve volume.

\smallskip

\noindent
{\it Keywords:\,} geometric numerical integration, volume-preservation, aromatic B-series, divergence, aromatic bicomplex, aromatic forms, solenoidal forms, Euler operator, homotopy operator, integration by parts, Euler-Lagrange complex.
\smallskip

\noindent
{\it AMS subject classification (2020):\,} 65L06, 41A58, 58J10, 58A12, 37M15, 05C05.
\end{abstract}


\section{Introduction}

Let~$f\colon \R^d \rightarrow \R^d$ be a smooth Lipschitz vector field and let~$y\colon [0,T]\rightarrow \R^d$ be the solution to the ordinary differential equation
\begin{equation}
\label{equation:ODE}
y'(t)=f(y(t)), \quad t\in ]0,T[, \quad y(0)=y_0.
\end{equation}
If the vector field~$f$ is divergence-free, that is, if~$\Div(f)=0$, it is known that the solution of the ODE~\eqref{equation:ODE} is volume-preserving, that is, for any measurable set of initial conditions~$D$ with respect to the Lebesgue measure~$\lambda$, for any~$t>0$, the flow~$\varphi_t$ of the ODE~\eqref{equation:ODE} satisfies~$\lambda(\varphi_t(D))=\lambda(D)$.
Divergence-free vector fields appear in a variety of concrete dynamical systems, for instance in fluid dynamics, meteorology or molecular dynamics.
To integrate such systems, it is fundamental to use numerical integrators that also preserve the volume.
We discretize the time interval~$[0,T]$ into~$N+1$ equidistant steps~$t_n=nh$, with~$h$ the time stepsize, and we choose a one-step integrator
\begin{equation}
\label{equation:one-step_integrator}
y_{n+1}=\Phi(y_n,h).
\end{equation}
For large classes of integrators, the standard methodology to study volume-preservation uses backward error analysis (see, for instance, the textbook~\cite{Hairer06gni}).
The integrator~\eqref{equation:one-step_integrator} can be interpreted formally as the exact solution of a modified ODE
\begin{equation}
\label{equation:modified_ODE}
\widetilde{y}'(t)=\widetilde{f}(\widetilde{y}(t)),
\end{equation}
where the modified vector field~$\widetilde{f}$ typically depends on~$f$ and its derivatives. Then, the integrator is volume-preserving if and only if~$\Div(\widetilde{f})=0$.

There is a considerable literature on volume-preserving methods, with many applications for solving a variety of dynamical systems.
The existing volume-preserving integrators rely either on a specific form of the vector field~$f$~\cite{Hairer06gni,Chartier07pfi}, on splitting methods~\cite{Kang95vpa,McLachlan02sm,McLachlan08evp,Xue13evp}, or on generating functions~\cite{Scovel91sni,Shang94cov,Quispel95vpi}.
For quadratic differential equations, we mention the works~\cite{Petrera11oio,Celledoni13gpo,Celledoni14ipo,Celledoni22dad,Bogfjellmo22uat} that study the Kahan-Hirota-Kimura discretization~\cite{Kahan93unm,Hirota00dote,Hirota00dotl} for the preservation of measures.
The splitting approach relies on the knowledge of the exact flows involved or depends on the dimension of the problem, and is limited to order two in the case of non-reversible problems~\cite{Blanes05otn}.
The methodology with generating functions works with any vector field, but its complexity increases with the dimension of the problem and the approach requires the evaluation of multiple integrals per step.
An important open question in geometric numerical integration is the creation of a volume-preserving method for solving a general ODE of the form~\eqref{equation:ODE} with a complexity independent of the dimension of the problem.
We investigate in this work the volume-preserving aromatic B-series method.

Introduced in~\cite{Butcher72aat,Hairer74otb} (see also the textbooks~\cite{Hairer06gni,Butcher16nmf,Butcher21bsa} and the review~\cite{McLachlan17bsa}), the Butcher-series formalism is an important tool in numerical analysis. Originally used for the calculation of order conditions for Runge-Kutta methods, the use of B-series was quickly extended to a variety of applications, in particular in geometric numerical integration~\cite{Hairer06gni}, and more recently in the approximation of stochastic evolutionary problems~\cite{Burrage96hso, Komori97rta, Rossler04ste, Debrabant08bsa, Laurent20eab} or in the theory of rough paths~\cite{Gubinelli10ror,Hairer15gvn}.
For a large class of integrators, such as Runge-Kutta methods, the modified vector field $\widetilde{f}$ in \eqref{equation:modified_ODE} is a B-series in $f$.
An extension of B-series, called aromatic B-series, was introduced in~\cite{Chartier07pfi,Iserles07bsm} for the study of volume-preserving integrators. They allow to compute the divergence of a B-series, and were later studied in~\cite{McLachlan16bsm,MuntheKaas16abs,Bogfjellmo19aso,Floystad20tup} for their algebraic and geometric properties.
In~\cite{Chartier07pfi,Iserles07bsm}, it is showed that no non-trivial B-series is divergence-free, so that the only volume-preserving B-series method is the exact flow. In particular, no Runge-Kutta method can preserve volume exactly (see also~\cite{Kang95vpa}).
However, the space of divergence-free aromatic B-series is infinite-dimensional, the simplest non-trivial example being:
\begin{equation}
\label{equation:first_ex_div_free}
\sum_{i,j,k=1}^d (f^j_k f^k_j f^i+f^j_{jk} f^k f^i-f^j_j f^i_k f^k- f^i_{jk} f^j f^k)\partial_i \in \Ker(\Div).
\end{equation}
In~\cite{MuntheKaas16abs}, the question whether there exists a volume-preserving integrator that has an expansion as an aromatic B-series is raised.
The present article gives a handful of tools to answer this question and describes explicitly the aromatic B-series of a volume-preserving integrator.
In particular, we prove that no aromatic Runge-Kutta method can preserve-volume exactly.

When using B-series, the study of volume-preservation translates into the study of the linear combinations of aromatic forests of vanishing divergence, that we call solenoidal forms. To the best of our knowledge, there does not exist any tool in the literature to describe these forests.
In differential geometry, the study of differential forms of vanishing divergence is done with the De Rham complex~\cite{Lee13its}.
For instance, in dimension 3, denote~$\Omega_n^{\text{(diff)}}(\R^3)$ the space of smooth differential~$n$-forms on~$\R^3$, then the De Rham complex writes
\begin{equation}
\label{equation:De_Rham}
\begin{tikzcd}
\Omega_0^{\text{(diff)}}(\R^3) \arrow{r}{d} & \Omega_1^{\text{(diff)}}(\R^3) \arrow{r}{d} & \Omega_2^{\text{(diff)}}(\R^3) \arrow{r}{d} & \Omega_3^{\text{(diff)}}(\R^3)
,
\end{tikzcd}
\end{equation}
where~$d$ is the exterior derivative.
In this particular example, the arrows correspond in order to the gradient, the curl, and the divergence operators.
The chain~\eqref{equation:De_Rham} is called a complex as the composition of two successive maps vanishes.
Moreover, the De Rham complex~\eqref{equation:De_Rham} is exact; that is, the image of a map is exactly the kernel of its successor. For instance, a divergence-free form~$\omega\in\Omega_3(\R^3)~$ is a curl~$\omega=d\eta$.
This exactness property is typically proven via the use of homotopy operators.
The analysis presented in this paper relies heavily on a generalisation of the De Rham complex, called the variational bicomplex, that we extend in the context of aromatic forests.

The variational bicomplex was originally introduced in the context of differential geometry~\cite{Vinogradov78ass,Tulczyjew80tel,Tsujishita82ovb, Vinogradov84tcsa,Vinogradov84tcsb} as a natural and general development of the variational chain.
It has a variety of applications in the areas of differential
geometry and topology, differential equations, mathematical physics and PDEs  (see, for instance, the textbooks~\cite{Anderson89tvb,Olver93aol}, the introductory article~\cite{Anderson92itt}, and references therein).
In this work, we introduce an algebraic tool on aromatic forests, that we call the aromatic bicomplex, in the spirit of the variational bicomplex.
If one considers the elementary differentials associated to the aromatic forests in the bicomplex, it yields a subcomplex of the standard variational bicomplex.
An originality of the approach is that the analysis  of the aromatic bicomplex uses simple combinatorics and graph theory and avoids the technical details of differential geometry.
We emphasize that, unlike the analysis of the variational bicomplex, the analysis of the aromatic bicomplex is completely independent of the dimension of the problem.
We will also add the extra assumption~$\Div(f)=0$, whose effect has never been studied, to the best of our knowledge, in the context of the variational bicomplex.
Thanks to the exactness of the aromatic bicomplex, we describe completely the solenoidal forms, both in the standard context and under the assumption~$\Div(f)=0$. We provide new operations on aromatic forests, such as the Euler operators and homotopy operators, and we draw links with the process of integration by parts of trees described in~\cite{Laurent20eab,Laurent21ocf}.
The main application of this work is the explicit description of the aromatic B-series of a volume-preserving integrator. In particular, we show that no aromatic Runge-Kutta method can preserve volume, and we propose a possible new ansatz for the creation of volume-preserving aromatic B-series methods.

The paper is organised as follows.
Section~\ref{section:preliminaries} introduces the aromatic forests and forms, the aromatic bicomplex, and presents the main theoretical results of this paper.
In Section~\ref{section:aromatic_bicomplex}, we introduce the Euler operators and study the exactness of the aromatic bicomplex, in the standard context and in the case of a divergence-free vector field.
In Section~\ref{section:applications}, we present different extensions and applications of the aromatic bicomplex.
More precisely, we introduce the augmented aromatic bicomplex and the Euler-Lagrange complex, we compute exactly the number of solenoidal forms, we derive bases and properties on divergences and solenoidal forms, and we draw links with the different integration by parts process of trees existing in the literature.
Finally, we apply our results to the study of volume-preserving integrators to obtain an explicit description of the B-series of an aromatic volume-preserving integrator.

%
%
%
%
%
%
%

\section{Preliminaries and main results}
\label{section:preliminaries}

This section is devoted to the presentation of the new objects and to the associated main algebraic results.
We first recall the definition of aromatic forests, and present an extension, called aromatic forms, well-suited for studying the divergence-free combination of forests.
Using grafting operations, we define the aromatic bicomplex, a similar tool to the variational bicomplex in the context of differential geometry, and we use its exactness to describe aromatic forms of vanishing divergence.
The proofs of the results of this section, as well as further tools, such as the Euler operators, are presented in Section~\ref{section:aromatic_bicomplex}, while we give more details and concrete applications in Section~\ref{section:applications}.

\subsection{Aromatic forms: definition and operations}

B-series were introduced by Hairer and Wanner in~\cite{Hairer74otb}, based on the work of Butcher~\cite{Butcher72aat}. Their applications in the numerical analysis of deterministic differential equations are numerous (see, for instance, the textbooks~\cite{Butcher21bsa,Hairer06gni}).
The aromatic extension of Butcher-series was introduced independently in the works~\cite{Chartier07pfi,Iserles07bsm} to study volume-preserving integrators.
In particular, this extension allows us to represent the divergence of standard B-series, which is a key tool in the study of volume-preserving methods (see \cite[Sect.\ts VI.9]{Hairer06gni}).
In the spirit of differential geometry, we work in this paper with an extension of aromatic B-series that is analogous to differential forms, and we follow the graph definition of aromatic B-series of~\cite{Bogfjellmo19aso}.

\begin{definition}
Let~$V$ be a finite set of nodes, and~$E\subset V\times V$ a set of edges. If~$a=(v,w)\in E$, the edge~$a$ is going from~$v$ to~$w$, and~$v$ is a predecessor of~$w$.
We split the set~$V$ into vertices~$V^{\bullet}$ and covertices~$V^{\circ}$. The covertices are numbered from~$1$ to~$p$, while the vertices are not numbered.
Each node in~$V$ is the target of at most one node. The nodes that are not the target of any node are called roots. The roots are numbered from~$1$ to~$n$.
Any connected component of such a graph either has a root, and is called a tree, or does not have a root, and is called an aroma.
We call aromatic forests such graphs, up to equivalence of graphs that preserve the numbering of the covertices and the roots. We write~$\FF_{n,p}$ the set of aromatic forests with~$n$ roots and~$p$ covertices,~$\FF_{n,p}^N$ its subset with forests of exactly~$N$ nodes, and~$\FF_{n}=\FF_{n,0}$. The number of nodes~$\abs{\gamma}$ is called the order of the forest~$\gamma$.
The elements of~$\FF_1$ are called aromatic trees, and the subset~$\TT$ of~$\FF_1$ that contains the trees without aromas is the set of Butcher trees.
\end{definition}

We draw the aromatic forests as follows. The vertices are represented as black nodes, and the covertices as circles of the form~$\Circled{i}$, where~$i$ is the associated number. The trees are drawn in the ascending order of their roots, from left to right. The aromas are placed in front and their order does not matter. The orientation of the edges goes from top to bottom and in clockwise order for loops. A loop with~$K$ nodes is called a~$K$-loop, in the spirit of~\cite{Iserles07bsm}.
For instance, the following forest~$\gamma\in\FF_{3,2}^{10}$ has two aromas, one~$1$-loop and one~$3$-loop,
\begin{equation}
\label{example:aromatic_forest}
\gamma=\atree3003 \atree2001 \atree3121 \atree1101 \atree1111.
\end{equation}
The aromatic forests with $N=2$ nodes are
$$\begin{array}{lll}
\FF^2_{2}=\{\atree1101 \atree1101\},
&\FF^2_{1}=\{\atree2101,\atree1001 \atree1101\},
&\FF^2_{0}=\{\atree2001,\atree2002,\atree1001\atree1001\},\\
\FF^2_{2,1}=\{\atree1101 \atree1111,\atree1111 \atree1101\},
&\FF^2_{1,1}=\{\atree2111,\atree2112,\atree1001 \atree1111,\atree1011 \atree1101\},
&\FF^2_{0,1}=\{\atree2012,\atree2013,\atree2011,\atree1001 \atree1011\},\\
\FF^2_{2,2}=\{\atree1111 \atree1121,\atree1121 \atree1111\},
&\FF^2_{1,2}=\{\atree2123,\atree2122,\atree1011 \atree1121,\atree1021 \atree1111\},
&\FF^2_{0,2}=\{\atree2021,\atree2022,\atree2023,\atree1011 \atree1021\}.
\end{array}$$

An aromatic forest in~$\FF_{n,p}$ represents a~$(n,p)$-tensor on the infinite jet bundle through the application of the elementary differential map\footnote{In the B-series literature, the elementary differential map is often defined on~$\FF_{1}$ and sends a vector field~$f$ to a vector field~$F(\gamma)(f)$. In agreement with the differential geometry literature~\cite{Anderson89tvb}, it proves more convenient to work with forms.}.
We refer the reader to~\cite{Anderson92itt} for details on the infinite jet bundle and the contact forms (see also the textbooks~\cite{Olver93aol,Kushner07cga,Lee13its}).
\begin{definition}
\label{definition:elementary_diff}
Let~$\gamma\in \FF_{n,p}$,~$f\colon\R^d\rightarrow\R^d$ a smooth vector field, and~$R=\{r_1, \dots, r_n\}\subset V$ the~$n$ roots of~$\gamma$, then the elementary differential~$F(\gamma)(f)$ is the tensor of type~$(n,p)$ on the infinite jet bundle given by
$$
F(\gamma)(f)=
\sum_{\underset{w\in V\setminus R}{i_w\in\{1,\dots,d\}}} \prod_{v\in V^{\bullet}} f^{i_v}_{I_{\Pi(v)}} dx^{i_{r_1}}\otimes \dots \otimes dx^{i_{r_n}} \otimes \theta^{i_{\Circled{1}}}_{I_{\Pi(\Circled{1})}} \otimes \dots \otimes \theta^{i_{\Circled{p}}}_{I_{\Pi(\Circled{p})}}
,
$$
where~$\Pi(v)$ is the set of predecessors of~$v\in V$ and the multi-index partial derivative is~$f^{j}_{I_S}=\frac{\partial f^{j}}{\partial x^{i_{s_1}}\dots \partial x^{i_{s_m}}}$ for~$S=\{s_1,\dots,s_m\}$.
The~$\theta^j_I$ are the contact one forms~$\theta^j_I=d f^j_I-\sum_k f^j_{I,k} dx^k$ and~$d$ is the exterior derivative.
We extend~$F$ on~$\Span(\FF_{n,p})$ by linearity.
\end{definition}
For instance, we find
$$F(\atree1001)(f)=\Div(f)=\sum_{i=1}^d f^i_i, \quad F(\atree2101)(f)=\sum_{i,j=1}^d f^i_j f^j dx^i, \quad F(\atree1111)(f)=\sum_{i=1}^d dx^i \otimes \theta^i.$$
The forest~$\gamma$ in~\eqref{example:aromatic_forest} represents the elementary differential
$$
F(\gamma)(f)=\Big(\sum_{j,k,l=1}^d f_j^k f_k^l f_l^j\Big)
\Big(\sum_{j,k=1}^d f_{jk}^j f^k\Big)
\sum_{i_{r_1},i_{r_2},i_{r_3},j,k=1}^d f^{i_{r_1}}_j f^k f^{i_{r_2}}
dx^{i_{r_1}}\otimes dx^{i_{r_2}} \otimes dx^{i_{r_3}} \otimes \theta^{i_{r_3}} \otimes \theta^j_k.
$$

In the following, we work with specific linear combinations of aromatic forests in~$\Span(\FF_{n,p})$. Given an elementary tensor, one can alternatize it to transform it into a differential form. The same idea gives rise to the aromatic forms.
\begin{definition}
For~$\gamma\in \FF_{n,p}$, let~$\SS_n^{\bullet}$ (resp.\ts~$\SS_p^{\circ}$) be the set of permutations of the roots of~$\gamma$ (resp.\ts the covertices of~$\gamma$). We define the roots wedge of~$\gamma$ as
$$
\wedge^{\bullet} \gamma = \frac{1}{n!} \sum_{\sigma\in \SS_{n}^{\bullet}} \varepsilon(\sigma) \sigma \gamma,
$$
where~$\varepsilon(\sigma)$ is the signature of the permutation~$\sigma$.
Similarly, the covertices wedge is 
$$
\wedge^{\circ} \gamma = \frac{1}{p!} \sum_{\sigma\in \SS_p^{\circ}} \varepsilon(\sigma) \sigma \gamma,
$$
and the total wedge is~$\wedge=\wedge^{\bullet}\wedge^{\circ}=\wedge^{\circ}\wedge^{\bullet}$.
\end{definition}

We extend the wedge operations to~$\Span(\FF_{n,p})$ by linearity and we denote the set of aromatic forms~$\Omega_{n,p}=\wedge\Span(\FF_{n,p})$, respectively~$\Omega_{n,p}^N=\wedge\Span(\FF_{n,p}^N)$, and~$\Omega_n=\Omega_{n,0}$.
As~$\wedge^2=\wedge$, the operator~$\wedge\colon \Span(\FF_{n,p})\rightarrow\Omega_{n,p}$ is a projection on~$\Omega_{n,p}$.

\begin{ex*}
Let~$\gamma=\atree1101 \atree2101\in \FF_2$, then
$$
\wedge \gamma=\frac{1}{2}(\atree1101 \atree2101-\atree2101 \atree1101) \in \Omega_2.$$
For~$\gamma$ given by~\eqref{example:aromatic_forest}, the associated aromatic form is
\begin{align*}
\wedge \gamma&=\frac{1}{12}\Big(
\atree3003 \atree2001 \atree3121 \atree1101 \atree1111
+\atree3003 \atree2001 \atree1111 \atree3121 \atree1101
+\atree3003 \atree2001 \atree1101 \atree1111 \atree3121
-\atree3003 \atree2001 \atree3121 \atree1111 \atree1101\\&
-\atree3003 \atree2001 \atree1101 \atree3121 \atree1111
-\atree3003 \atree2001 \atree1111 \atree1101 \atree3121
-\atree3003 \atree2001 \atree3113 \atree1101 \atree1121
-\atree3003 \atree2001 \atree1121 \atree3113 \atree1101\\&
-\atree3003 \atree2001 \atree1101 \atree1121 \atree3113
+\atree3003 \atree2001 \atree3113 \atree1121 \atree1101
+\atree3003 \atree2001 \atree1101 \atree3113 \atree1121
+\atree3003 \atree2001 \atree1121 \atree1101 \atree3113\Big).
\end{align*}
\end{ex*}


We now define the grafting and replacing operations on aromatic forests.
\begin{definition}
\label{definition:stitch_replace}
Let~$\gamma\in \FF_{n,p}$ be a forest,~$r$ a root of~$\gamma$, and~$u\in V$ (possibly equal to~$r$), then~$D^{r\rightarrow u} \gamma$ returns a copy of~$\gamma$ where the node~$r$ is now a predecessor of~$u$. We define the operator~$D^r \gamma=\sum_{u\in V} D^{r\rightarrow u} \gamma$.
Let~$\gamma\in \FF_{n,p}$ and~$v\in V^\bullet$, we define~$\gamma_{v\rightarrow \Circled{k}}$ as the forest obtained by replacing the node~$v$ by a new covertex~$\Circled{k}$. Similarly,~$\gamma_{\Circled{k}\rightarrow \bullet}$ is the forest obtained by replacing the covertex~$\Circled{k}$ by a vertex.
\end{definition}

\begin{ex*}
Let~$\gamma=\atree2111$ and~$r$ its root, then
$$
D^r \gamma=\atree2013+\atree2011,\quad
\gamma_{r\rightarrow \Circled{2}}=\atree2122,\quad
\gamma_{\Circled{1}\rightarrow \bullet}=\atree2101.
$$
\end{ex*}

The horizontal derivative is defined using grafting operations, while the vertical derivative uses the replacing operation.
\begin{definition}
Let~$\gamma\in\FF_{n,p}$, the horizontal and vertical derivatives are
$$d_H \gamma=D^{r_n}\gamma,\quad d_V \gamma=\wedge \sum_{v\in V^{\bullet}} \gamma_{v\rightarrow \Circled{p+1}}.$$
We extend~$d_H$ and~$d_V$ on~$\Omega_{n,p}$ by linearity into~$d_H\colon \Omega_{n,p} \rightarrow \Omega_{n-1,p}$ and~$d_V\colon \Omega_{n,p} \rightarrow \Omega_{n,p+1}$.
The aromatic forms in~$\Ker(d_H)$ are called solenoidal forms,~$\Psi=\Ker(d_H|_{\Omega_{1}})$ are the solenoidal combinations of trees, and~$\Psi^N=\Ker(d_H|_{\Omega_{1}^N})$ are the solenoidal combinations of trees of order~$N$.
\end{definition}
The operator~$d_H\colon \Omega_1\rightarrow\Omega_0$ is often called the divergence of an aromatic tree, as for~$\gamma\in \Omega_1$,~$d_H$ satisfies~$\Div(g)=F(d_H(\gamma))(f)$, where~$F(\gamma)(f)=\sum_i g^i dx^i$.
The operator~$d_H\colon \Omega_1\rightarrow\Omega_0$ is often called the divergence of an aromatic tree.

\begin{ex*}
Consider~$\gamma_1=\atree1101 \in \Omega_1$,~$\gamma_2=\wedge \atree1101 \atree2101\in \Omega_2$, and~$\gamma_3=\wedge \atree1101 \atree1111\in \Omega_{2,1}$, then
$$
d_H\gamma_1=\atree1001, \quad
d_H\gamma_2=\frac{1}{2}\Big(\atree2002 \atree1101+\atree2001 \atree1101-\atree1001 \atree2101-\atree3101\Big), \quad
d_H\gamma_3=\frac{1}{2}\Big(\atree1011 \atree1101+\atree2111-\atree1001 \atree1111-\atree2112\Big)
,$$
$$
d_V\gamma_1=\atree1111, \quad
d_V\gamma_2=\wedge \atree1111 \atree2101+\wedge \atree1101 \atree2111+\wedge \atree1101 \atree2112, \quad
d_V\gamma_3=\wedge\atree1121 \atree1111=\frac{1}{2}(\atree1121 \atree1111-\atree1111 \atree1121)
.$$
A calculation yields~$d_H^2\gamma_2=0$, so that~$d_H\gamma_2\in \Psi$ is a solenoidal form.
\end{ex*}

The following result, proven in Subsection~\ref{section:aromatic_bicomplex_theorem_proof}, allows us to define the bicomplex.
\begin{proposition}
\label{proposition:derivatives_squared}
The horizontal and vertical derivatives satisfy
$$
d_H^2=0, \quad d_V^2=0, \quad d_V d_H=d_H d_V.
$$
\end{proposition}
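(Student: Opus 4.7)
My plan is to derive all three identities from a single template: the operations $D^{r\to u}$ and $\gamma \mapsto \gamma_{v\to\Circled{k}}$ commute with themselves and with each other on the level of labelled graphs, and any symmetry that emerges from this combinatorial commutativity is killed by the antisymmetrization $\wedge$ that defines $\Omega_{n,p}$.

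For $d_H^2 = 0$, I would expand $d_H^2 \alpha = D^{r_{n-1}} D^{r_n} \alpha$ as a double sum indexed by pairs of grafting targets $(u, u') \in V \times V$. Because $D^{r_{n-1} \to u'}$ and $D^{r_n \to u}$ merely add the edges $(u', r_{n-1})$ and $(u, r_n)$ independently, they commute as graph operations. Letting $\sigma$ denote the transposition of the root labels $n-1$ and $n$ acting on $\alpha$, a bijective re-indexing $u \leftrightarrow u'$ of the double sum shows that $D^{r_{n-1}} D^{r_n} \sigma \alpha = D^{r_{n-1}} D^{r_n} \alpha$, since after both graftings the former roots are non-root nodes that no longer carry root labels. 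But $\sigma \alpha = -\alpha$ on $\Omega_{n,p}$, so $d_H^2 \alpha = -d_H^2 \alpha$ and the result vanishes.

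The identity $d_V^2 = 0$ is handled by the same template with vertex-replacement in place of grafting: the operations $\gamma \mapsto \gamma_{v_1 \to \Circled{p+1}}$ and $\gamma \mapsto \gamma_{v_2 \to \Circled{p+2}}$ at distinct vertices commute, and the resulting double sum is symmetric under the simultaneous swap of $(v_1, v_2)$ with $(\Circled{p+1}, \Circled{p+2})$. Since the outer $\wedge^{\circ}$ antisymmetrizes over covertices including this new pair, the expression vanishes.

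For $d_V d_H = d_H d_V$, I would verify that grafting (which alters only the edge set) and replacement (which alters only the $V^{\bullet}/V^{\circ}$ partition and the covertex numbering) are structurally independent modifications of a forest. When the root $r_n$ being grafted lies in $V^{\bullet}$ and the vertex being relabelled happens to be $r_n$ itself, both orderings produce a non-root covertex $\Circled{p+1}$ with an edge grafted onto it, so they agree; in all other cases the two operations act on disjoint data and trivially commute. The main bookkeeping obstacle, which I would address explicitly, is checking that the set of vertices available to $d_V$ after $d_H$ equals the original $V^{\bullet}$---which holds because grafting preserves the $V^{\bullet}/V^{\circ}$ partition---so that the summation ranges match in both orderings.
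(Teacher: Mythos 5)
Your proposal is correct and follows essentially the same route as the paper: for $d_H^2$ and $d_V^2$ it combines the commutativity of the two grafting (resp.\ replacement) operations with the antisymmetry of the form under the transposition of the last two roots (resp.\ covertices), and for $d_Hd_V=d_Vd_H$ it checks the structural independence of grafting and replacement, which the paper itself dismisses as straightforward. The only cosmetic difference is that you apply the transposition directly to the antisymmetrized form $\alpha$ rather than reindexing the explicit sum over $\SS_n^{\bullet}$ as the paper does; the content is identical.
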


\begin{remark}
The horizontal and vertical derivatives were already defined on the set of aromatic trees~$\Omega_1=\Span(\FF_1)$ respectively in~\cite{Chartier07pfi,Iserles07bsm} for~$d_H$ and in~\cite{Floystad20tup} for~$d_V$. In this last work, the trace operator~$\Trace\colon \Omega_{1,1}\rightarrow \Omega_0$ on aromatic trees is studied. For~$\gamma \in \Omega_{1,1}$, it is given with our notations by
$$\Trace \gamma = (D^{r\rightarrow\Circled{1}}\gamma)_{\Circled{1}\rightarrow \bullet},$$
and it makes the following diagram commute.
$$\begin{tikzcd}
 \Omega_{1,1} \arrow{dr}{\Trace} & \\
 \Omega_{1} \arrow{r}{d_H} \arrow{u}{d_V} & \Omega_{0}
\end{tikzcd}$$
\end{remark}

\subsection{The aromatic bicomplex: exactness and description of solenoidal forms}
\label{section:intro_bicomplex}

The variational bicomplex is a powerful tool of variational calculus~\cite{Anderson92itt}.
We introduce in the context of aromatic forms a tool in the spirit of the variational bicomplex. This new complex, that we call the aromatic bicomplex, allows us in particular to describe explicitly the solenoidal forms in the standard context and in the divergence-free case.

The aromatic bicomplex is the diagram drawn in Figure~\ref{figure:aromatic_bicomplex}. We also introduce its variant with forms of fixed order~$N$.
\begin{figure}[!ht]
$$\begin{tikzcd}
 & \vdots & \vdots & \vdots\\
    \dots \arrow{r}{d_H} & \Omega_{2,2} \arrow{r}{d_H} \arrow{u}{d_V} & \Omega_{1,2} \arrow{r}{d_H} \arrow{u}{d_V} & \Omega_{0,2} \arrow{u}{d_V}\\
    \dots \arrow{r}{d_H} & \Omega_{2,1} \arrow{r}{d_H} \arrow{u}{d_V} & \Omega_{1,1} \arrow{r}{d_H} \arrow{u}{d_V} & \Omega_{0,1} \arrow{u}{d_V}\\
    \dots \arrow{r}{d_H} & \Omega_{2} \arrow{r}{d_H} \arrow{u}{d_V} & \Omega_{1} \arrow{r}{d_H} \arrow{u}{d_V} & \Omega_{0} \arrow{u}{d_V}\\
     & 0 \arrow{u} & 0 \arrow{u} & 0 \arrow{u}
\end{tikzcd}
\qquad\qquad
\begin{tikzcd}
 & 0 &  & 0\\
0 \arrow{r} & \Omega_{N,N}^N \arrow{r}{d_H} \arrow{u} & \dots \arrow{r}{d_H} & \Omega_{0,N}^N \arrow{u}\\
& \vdots \arrow{u}{d_V} &  & \vdots \arrow{u}{d_V}\\
0 \arrow{r} & \Omega_{N}^N \arrow{r}{d_H} \arrow{u}{d_V} & \dots \arrow{r}{d_H} & \Omega_{0}^N \arrow{u}{d_V}\\
 & 0 \arrow{u} &  & 0 \arrow{u}
\end{tikzcd}$$
\caption{The aromatic bicomplex (left) and its subcomplex of order~$N$ (right).}
	\label{figure:aromatic_bicomplex}
\end{figure}
The aromatic bicomplex can be completed by an extra column on the right in order, for instance, to describe the aromatic forms in~$\Img(d_H|_{\Omega_{1,p}})$, as we will see in Subsection~\ref{section:augmented_bicomplex}.
We refer the reader to the appendix~\ref{section:examples_aromatic_bicomplex} for examples of the (augmented) aromatic bicomplex for the first values of~$N$.

\begin{remark}
The elementary differential map sends the aromatic bicomplex to a subcomplex of the variational bicomplex~\cite{Anderson92itt} in the following way.
Consider a vector field~$f\colon \R^d\rightarrow \R^d$, the space~$\Omega_{n,p}^{\text{(diff)}}$ of differential forms of type~$(n,p)$, and the Hodge star operator~$\star\colon \Omega_{n,p}^{\text{(diff)}}\rightarrow\Omega_{d-n,p}^{\text{(diff)}}$ (see, for instance,~\cite{Lee13its}).
The map~$\star F(.)(f)$ sends an aromatic form in~$\Omega_{n,p}$ to a differential form in~$\Omega_{d-n,p}^{\text{(diff)}}$. The derivatives~$d_H$,~$d_V$ on the aromatic bicomplex and~$d_H^{\text{(diff)}}$,~$d_V^{\text{(diff)}}$ on the variational bicomplex satisfy
$$d_H^{\text{(diff)}} \star F(\gamma)(f)= \star F(d_H\gamma)(f), \quad
d_V^{\text{(diff)}} F(\gamma)(f)=(-1)^{n+p} F(d_V\gamma)(f).$$
Note that the dimension~$d$ of the problem plays a role in the context of differential geometry, but not for aromatic forms.
Note also that~$d_H$ and~$d_V$ commute (see Proposition~\ref{proposition:derivatives_squared}), while their counterparts from differential geometry anticommute.
%
\end{remark}

The main property of the aromatic bicomplex is its exactness.
\begin{theorem}
\label{theorem:exact_aro_var_bi}
The horizontal and vertical sequences of the aromatic bicomplex are exact, that is, for all~$n$,~$p\geq 0$,
$$
\Img(d_H|_{\Omega_{n+1,p}^N})=\Ker(d_H|_{\Omega_{n,p}^N}), \quad \Img(d_V|_{\Omega_{n,p}^N})=\Ker(d_V|_{\Omega_{n,p+1}^N}).
$$
\end{theorem}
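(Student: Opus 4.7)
The plan is to prove exactness by constructing explicit homotopy operators in both directions and then invoking the standard $dh+hd = c\cdot\mathrm{id}$ argument. The inclusion $\Img\subseteq\Ker$ in each sequence is automatic from Proposition~\ref{proposition:derivatives_squared}, which gives $d_H^2=0$ and $d_V^2=0$, so the entire content is in the reverse inclusion.

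For the vertical sequence, I would define a homotopy $h_V:\Omega_{n,p+1}^N\to\Omega_{n,p}^N$ that reverses $d_V$ by converting the last-numbered covertex back into a vertex,
\begin{equation*}
h_V(\omega) \;=\; \omega_{\Circled{p+1}\rightarrow\bullet},
\end{equation*}
up to a normalization by the combinatorial count of vertices and covertices at order~$N$. The goal is to verify an identity of the form $h_Vd_V+d_Vh_V = c_V\cdot\mathrm{id}$ on $\Omega_{n,p+1}^N$, where $c_V=c_V(N,n,p)$ is an explicit nonzero integer. When both compositions are expanded, the off-diagonal terms, namely those in which the covertex reintroduced by $h_V$ is not the one just created by $d_V$, should cancel in pairs via the signs supplied by $\wedge^\circ$, leaving only the diagonal contribution that rebuilds $\omega$ with a multiplicity that counts the available slots.

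For the horizontal sequence, the corresponding homotopy $h_H:\Omega_{n,p}^N\to\Omega_{n+1,p}^N$ is built by reversing the grafting: given a forest $\gamma$, sum over all edges whose source is not already a root, cut such an edge, promote its source to a new $(n{+}1)$-th root, and apply $\wedge^\bullet$. The target identity is $h_Hd_H+d_Hh_H=c_H\cdot\mathrm{id}$. Unfolding $d_Hh_H\gamma$ gives a double sum over (edge cut, vertex regrafted to), and $h_Hd_H\gamma$ gives a similar double sum with the two operations in the opposite order; the diagonal terms (where the cut-then-graft, or graft-then-cut, returns to the same configuration) recombine to a count of $\gamma$, while the off-diagonal terms annihilate under the alternation of roots.

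The main obstacle will be the combinatorics of the two independent alternations $\wedge^\bullet$ on roots and $\wedge^\circ$ on covertices. Unlike the De~Rham case, where a single antisymmetric structure makes a Poincar\'e-lemma style argument transparent, here $d_H$ and $d_V$ interact nontrivially with the root-graft and covertex-replace operations and the wedges do not commute with these operations in an obvious manner. I expect the Euler operators announced for Section~\ref{section:aromatic_bicomplex} to supply the right projection framework: they should act as weighted node-counting operators that isolate the diagonal contribution in each identity, so that once $h_H$ and $h_V$ are defined, checking the homotopy identities reduces to a direct calculation on a single aromatic forest and an extension to $\Omega_{n,p}^N$ by linearity. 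With the identities in hand, exactness follows: if $d_H\omega=0$ then $c_H\,\omega = d_H(h_H\omega)$ exhibits $\omega\in\Img(d_H)$, and analogously for $d_V$.
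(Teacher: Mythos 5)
Your vertical homotopy is essentially the paper's: Proposition~\ref{proposition:vertical_homotopy} takes $h_V \gamma = \frac{p}{\abs{\gamma}}\gamma_{\Circled{p}\rightarrow\bullet}$ and verifies $d_V h_V + h_V d_V = \mathrm{id}$ by exactly the sign-cancellation over $\wedge^{\circ}$ that you describe, so that half of the plan is sound in outline.

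The horizontal half has a genuine gap: the operator you propose --- cut a single edge, promote its source to a new last root, alternate --- does not satisfy a homotopy identity, and the off-diagonal terms do not cancel under $\wedge^{\bullet}$. Concretely, take $\gamma = \atree2101 \in \Omega_{1}^2$. Your $h_H\gamma$ vanishes (cutting the unique edge produces two identical single-node roots, killed by $\wedge^{\bullet}$), while $d_H\gamma = \atree2001+\atree2002$ and your $h_H(\atree2001+\atree2002) = 3\,\atree2101 + \atree1001\,\atree1101$, so that $(d_H h_H + h_H d_H)\gamma = 3\,\atree2101 + \atree1001\,\atree1101$, which is not a multiple of $\gamma$; the spurious term $\atree1001\,\atree1101$ has no partner to cancel against because there is only one root available to alternate. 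The obstruction is structural: a node may have several predecessors, so undoing $d_H$ requires detaching \emph{all} incoming edges of a node and redistributing them with signs (the graph analogue of repeated integration by parts), not severing one edge. This is precisely what the paper's higher Euler operators $\Eulerop^q_v\gamma = (-1)^{\abs{\Pi(v)}-q}(D^{\abs{\Pi(v)}-q}\gamma_{v^{\starsymbol}})^{\starsymbol}$ encode, and the identity $\abs{\gamma}\gamma = \sum_q D^q\Eulerop^q\gamma$ (Proposition~\ref{prop:higher_Euler_operators_Omega}, resting on the Leibniz-type Lemma~\ref{lemma:Leibniz_divergence}) is what produces the diagonal contribution. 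The correct operator, $h_H\gamma = \frac{1}{\abs{\gamma}}\sum_q\frac{n+1}{q+n+1}\wedge D^q\Eulerop^{q+1}\gamma$, has outputs such as $h_H\atree3001 = \frac16\atree2001\,\atree1101+\frac16\atree1001\,\atree2101-\frac16\atree3101-\frac16\atree2002\,\atree1101$, with signed, redistributed terms that no single-edge-cut operator can generate. Your closing remark that the Euler operators should isolate the diagonal contribution points in the right direction, but they are not weighted node-counting projections; establishing the decomposition $\abs{\gamma}\gamma = \sum_q D^q\Eulerop^q\gamma$ and then threading it through the root alternation is the bulk of the paper's argument (Proposition~\ref{proposition:horizontal_homotopy}), and it is the step your proposal leaves unaddressed.
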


With Theorem~\ref{theorem:exact_aro_var_bi}, it is straightforward to generate all the solenoidal aromatic forms by considering the~$d_H \wedge \gamma$ for~$\gamma\in \FF_2$.
For example, the only basis element of~$\Psi^3$ (which corresponds to the vector field \eqref{equation:first_ex_div_free}) is
\begin{equation}
\label{equation:solenoidal_order_three}
2d_H\wedge \atree1101 \atree2101=\atree2002 \atree1101+\atree2001 \atree1101-\atree1001 \atree2101-\atree3101,
\end{equation}
and a basis of~$\Psi^4$ is given by the forests
\begin{align*}
2d_H\wedge \atree1101 \atree3102 &=\atree3001 \atree1101+\atree3002 \atree1101+\atree3003 \atree1101-\atree4103-\atree4104-\atree1001 \atree3102,\\
2d_H\wedge \atree1101 \atree3101 &=\atree3004 \atree1101+2\atree3002 \atree1101+\atree4103-2\atree4104-\atree4101-\atree1001 \atree3101,\\
2d_H\wedge \atree1001 \atree1101 \atree2101 &=\atree3001 \atree1101+\atree1001 \atree2002 \atree1101+\atree1001 \atree2001 \atree1101-\atree2001 \atree2101-\atree1001 \atree1001 \atree2101-\atree1001 \atree3101.
\end{align*}
However, the set~$\{d_H \wedge \gamma, \gamma\in \FF_2\}$ does not form a basis of the solenoidal forms in general. We give a basis of~$\Psi$ in Subsection~\ref{section:description_image_kernel_dH} alongside bases of the image and kernel of~$d_H$ and its dual~$d_H^*$.

For the study of volume-preserving integrators for solving~\eqref{equation:ODE}, it is fundamental to assume that the vector field~$f$ satisfies~$\Div(f)=0$.
With aromatic forms, it amounts to sending all forests containing an aroma with a 1-loop to~$0$.
We consider the vector space~$\AA$ spanned by aromatic forests containing at least one 1-loop,~$\AA_{n,p}=\AA\cap\Omega_{n,p}$, and~$\AA_{n,p}^N=\AA\cap\Omega_{n,p}^N$.
We write~$\widetilde{\FF}_{n,p}$ the set of aromatic forests in~$\FF_{n,p}$ without 1-loops,~$\widetilde{\Omega}_{n,p}=\Omega_{n,p}/\AA_{n,p}$,~$\widetilde{\Omega}_{n,p}^N=\Omega_{n,p}^N/\AA_{n,p}^N$,~$\widetilde{\Omega}_n=\widetilde{\Omega}_{n,0}$,~$\widetilde{\Psi}=\Ker(d_H|_{\widetilde{\Omega}_{1}})$, and~$\widetilde{\Psi}^N=\Ker(d_H|_{\widetilde{\Omega}_{1}^N})$.
The divergence-free aromatic bicomplex with~$N$ nodes is drawn in Figure~\ref{figure:div_free_bicomplex}.
\begin{figure}[!ht]
$$\begin{tikzcd}
 & \vdots & \vdots & \vdots\\
    \dots \arrow{r}{d_H} & \widetilde{\Omega}_{2,2}^N \arrow{r}{d_H} \arrow{u}{d_V} & \widetilde{\Omega}_{1,2}^N \arrow{r}{d_H} \arrow{u}{d_V} & \widetilde{\Omega}_{0,2}^N \arrow{u}{d_V}\\
    \dots \arrow{r}{d_H} & \widetilde{\Omega}_{2,1}^N \arrow{r}{d_H} \arrow{u}{d_V} & \widetilde{\Omega}_{1,1}^N \arrow{r}{d_H} \arrow{u}{d_V} & \widetilde{\Omega}_{0,1}^N \arrow{u}{d_V}\\
    \dots \arrow{r}{d_H} & \widetilde{\Omega}_{2}^N \arrow{r}{d_H} \arrow{u}{d_V} & \widetilde{\Omega}_{1}^N \arrow{r}{d_H} \arrow{u}{d_V} & \widetilde{\Omega}_{0}^N \arrow{u}{d_V}\\
    & 0 \arrow{u} & 0 \arrow{u} & 0 \arrow{u}
\end{tikzcd}
\qquad\qquad
\begin{tikzcd}
 & 0 & 0 \\
0 \arrow{r} & \widetilde{\Omega}_{1,1}^1=\Span(\atree1111) \arrow{r}{d_H} \arrow{u} & \widetilde{\Omega}_{0,1}^1=0 \arrow{u}\\
0 \arrow{r} & \widetilde{\Omega}_{1}^1=\Span(\atree1101) \arrow{r}{d_H} \arrow{u}{d_V} & \widetilde{\Omega}_{0}^1=0 \arrow{u}{d_V}\\
 & 0 \arrow{u} & 0 \arrow{u}
\end{tikzcd}$$
\caption{The divergence-free aromatic bicomplex of order~$N$ (left) and of order~$N=1$ (right).}
	\label{figure:div_free_bicomplex}
\end{figure}
In the simplest case~$N=1$ (see Figure~\ref{figure:div_free_bicomplex}), the aromatic bicomplex is not exact.
Indeed, we have~$\atree1101\in \Ker(d_H)$, but~$\atree1101\notin \Img(d_H)$.
One of the main results of this paper is that the case~$N=1$ is the only case where the divergence-free aromatic bicomplex is not exact.
\begin{theorem}
\label{theorem:exact_div_free_complex}
The divergence-free aromatic bicomplex with~$N$ nodes is exact if and only if~$N\neq 1$.
\end{theorem}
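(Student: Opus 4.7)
The backward direction is immediate from the observation just preceding the theorem: $d_H \atree1101 = \atree1001 \in \AA$, so $\atree1101$ represents a class in $\Ker(d_H|_{\widetilde{\Omega}_1^1})$, yet $\widetilde{\Omega}_2^1 = 0$ because no forest can have two roots with only one vertex, so this class is not a $d_H$-boundary.

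For the forward direction, assume $N \geq 2$ and work from the short exact sequence of bicomplexes
\[
0 \longrightarrow \AA^N \longrightarrow \Omega^N \longrightarrow \widetilde{\Omega}^N \longrightarrow 0.
\]
First I would verify that $\AA^N$ is stable under both $d_H$ and $d_V$: the unique node of any $1$-loop aroma already carries its own predecessor (via the self-edge), so no root can be grafted onto it; and replacing that node by a covertex merely converts a vertex $1$-loop aroma into a covertex $1$-loop aroma, both of which lie in $\AA^N$. Applying the long exact sequence in homology row by row and column by column, and invoking Theorem~\ref{theorem:exact_aro_var_bi} to kill the cohomology of $\Omega^N$, one obtains isomorphisms $H_\bullet(\widetilde{\Omega}^N) \cong H_{\bullet-1}(\AA^N)$; exactness of $\widetilde{\Omega}^N$ is thus equivalent to exactness of $\AA^N$.

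To prove $\AA^N$ exact, I would exploit that a $1$-loop aroma $L$ is a free-standing component with a single node. Setting $L_v = \atree1001$ and letting $L_c$ denote its covertex analogue, the insertions $\iota_v \colon \Omega^{N-1}_{n,p} \to \AA^N_{n,p}$ and $\iota_c \colon \Omega^{N-1}_{n,p-1} \to \AA^N_{n,p}$ given by $\gamma' \mapsto L \wedge \gamma'$ intertwine $d_H$ (no graft can target the node of $L$, so $d_H(L \wedge \gamma') = L \wedge d_H \gamma'$) and are injective. Using these insertions, any $d_H$-closed element of $\AA^N$ whose $1$-loop aromas include a vertex (resp.\ a covertex) type lifts to a $d_H$-exact element, provided the factored $\gamma'$ admits a preimage in $\Omega^{N-1}$---which is guaranteed by Theorem~\ref{theorem:exact_aro_var_bi} precisely because $N - 1 \geq 1$. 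A parallel argument dispatches the vertical direction, with the extra bookkeeping that $d_V L_v = L_c$.

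The main technical obstacle is that a forest may carry several $1$-loops of mixed type, so $\iota_v$ and $\iota_c$ have overlapping images; a Mayer--Vietoris-style argument, or an induction on the total count of $1$-loop aromas with $N = 2$ handled by direct inspection, is needed to combine the two insertions into a single exactness statement. The reason the whole strategy collapses at $N = 1$ is transparent: the would-be preimages live in $\Omega^{N-1} = \Omega^0$, which contains only the empty forest and cannot supply $d_H$-preimages for the trees of $\widetilde{\Omega}^1$.
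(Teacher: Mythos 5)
Your high-level strategy---the short exact sequence $0\to\AA^N\to\Omega^N\to\widetilde{\Omega}^N\to 0$ of complexes, the long exact sequence in homology, and Theorem~\ref{theorem:exact_aro_var_bi} to kill the middle term---is a legitimate reduction, and it is genuinely different from the paper, which instead constructs a modified homotopy operator $\widetilde{h}_H$ directly on the quotient $\widetilde{\Omega}_{n,p}$ and isolates the obstruction $R\gamma=\frac{1}{\abs{\gamma}}\Eulerop_r\gamma$ in the case $n=1$. However, the step where you prove $\AA^N$ exact rests on a false claim. The insertion $\iota_v\colon\gamma'\mapsto \atree1001\,\gamma'$ does \emph{not} intertwine $d_H$: by definition $d_H$ grafts the last root onto \emph{every} node of the forest, and a node carrying a self-edge can perfectly well receive further predecessors (only outgoing edges are limited to one per node, not incoming ones). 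Writing $v_L$ for the self-looped node, one has
$$d_H(\atree1001\,\gamma')=\atree1001\; d_H\gamma'\;+\;D^{r_n\rightarrow v_L}(\atree1001\,\gamma'),$$
and the second term, which attaches the last root of $\gamma'$ to the self-looped node, does not vanish. (This is also why one cannot simply restrict the homotopy operator of Proposition~\ref{proposition:horizontal_homotopy} to $\AA^N$: the Euler operators detach the self-edge and regraft it elsewhere, so they need not preserve $\AA^N$---precisely the difficulty that forces the paper to reweight $h_H$ into $\widetilde{h}_H$.) As a result the exactness of $\AA^N$, on which your whole forward direction depends, is not established. A second, acknowledged, gap is that even if the insertions were chain maps, the case of several 1-loops of mixed type is deferred to a ``Mayer--Vietoris-style argument or an induction'' that is never carried out.

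Your backward direction is correct and matches the paper's observation that $\atree1101\in\Ker(d_H|_{\widetilde{\Omega}_1^1})$ while $\widetilde{\Omega}_2^1=0$; your point that only the horizontal direction is at issue (the vertical homotopy $h_V$ descends to the quotient since it preserves 1-loops) is also fine. For comparison, the paper's Proposition~\ref{proposition:divergence-free_horizontal_homotopy} proves $(d_H\widetilde{h}_H+\widetilde{h}_Hd_H)\gamma=\gamma$ for $n>1$ and $=\gamma-R\gamma$ for $n=1$, then applies $\Eulerop_r$ to the identity and uses $\Eulerop_r^2=\Eulerop_r$ to get $\Eulerop_r\gamma=\frac{1}{N}\Eulerop_r\gamma$ whenever $d_H\gamma=0$, so the remainder vanishes exactly when $N\neq 1$. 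If you want to keep your homological-algebra route, you would need a correct proof that the subcomplex $\AA^N$ is exact, for instance via a filtration by the number of 1-loops rather than via the naive insertion maps.
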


The main consequence of the exactness of the aromatic bicomplex in both contexts is that the assumption~$\Div(f)=0$ does not create new non-trivial solenoidal forms.
\begin{theorem}
\label{theorem:Kernel_divergence_free}
For~$N\neq 1$, and all~$n\geq 1$,~$p\geq 0$, the kernel of the divergence operator~$d_H$ satisfies
$$\Ker(d_H|_{\widetilde{\Omega}_{n,p}^N})=\Ker(d_H|_{\Omega_{n,p}^N})/\AA_{n,p}^N,$$
that is, the solenoidal aromatic forms in the divergence-free context exactly correspond to the solenoidal aromatic forms in the standard context, except for the forms spanned by~$\atree1101$ and~$\atree1111$.
\end{theorem}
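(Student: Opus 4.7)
The plan is to deduce the claim formally from Theorems~\ref{theorem:exact_aro_var_bi} and~\ref{theorem:exact_div_free_complex} via a short-exact-sequence-of-complexes argument. The starting observation is that the horizontal derivative~$d_H$ restricts to a differential on the subspace~$\AA$: indeed,~$d_H\gamma=D^{r_n}\gamma$ only adds a single outgoing edge from the last root, which is itself never part of a loop, so every~$1$-loop of~$\gamma$ persists in each summand of~$d_H\gamma$. Hence, fixing~$p$ and~$N$, the inclusion~$\iota$ and the quotient~$\pi$ both intertwine~$d_H$, and one obtains the short exact sequence of chain complexes
$$0\longrightarrow \AA_{\bullet,p}^N\xrightarrow{\ \iota\ }\Omega_{\bullet,p}^N\xrightarrow{\ \pi\ }\widetilde{\Omega}_{\bullet,p}^N\longrightarrow 0,$$
with differential~$d_H$ lowering the number of roots.

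Next, I would run the long exact sequence in homology associated to this short exact sequence. Writing~$H_n(C)$ for~$\Ker(d_H)/\Img(d_H)$ at position~$n$, Theorem~\ref{theorem:exact_aro_var_bi} yields~$H_n(\Omega_{\bullet,p}^N)=0$ for every~$n\geq 0$ (at~$n=0$ the surjectivity~$d_H\colon\Omega_{1,p}^N\to\Omega_{0,p}^N$ is covered by the theorem under the natural convention~$\Ker(d_H|_{\Omega_{0,p}^N})=\Omega_{0,p}^N$), and Theorem~\ref{theorem:exact_div_free_complex}, under the hypothesis~$N\neq 1$, yields~$H_n(\widetilde{\Omega}_{\bullet,p}^N)=0$ for every~$n\geq 0$. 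Sandwiching~$H_n(\AA_{\bullet,p}^N)$ between two vanishing terms in the long exact sequence then forces~$H_n(\AA_{\bullet,p}^N)=0$ for all~$n\geq 0$, so the subcomplex of forms containing a~$1$-loop is itself exact whenever~$N\neq 1$.

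The theorem is now a short cycle-lifting argument. Given~$\bar\omega\in\Ker(d_H|_{\widetilde{\Omega}_{n,p}^N})$, pick any representative~$\omega\in\Omega_{n,p}^N$. Then~$d_H\omega\in\AA_{n-1,p}^N$ (as it vanishes under~$\pi$), and~$d_H(d_H\omega)=0$ by Proposition~\ref{proposition:derivatives_squared}, so~$d_H\omega$ is a cycle in~$\AA_{\bullet,p}^N$. By the exactness of~$\AA$, there exists~$\omega'\in\AA_{n,p}^N$ with~$d_H\omega'=d_H\omega$, and then~$\omega-\omega'\in\Ker(d_H|_{\Omega_{n,p}^N})$ is a cycle in the standard complex whose class modulo~$\AA_{n,p}^N$ coincides with that of~$\omega$. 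This proves that the natural map~$\Ker(d_H|_{\Omega_{n,p}^N})\to\Ker(d_H|_{\widetilde{\Omega}_{n,p}^N})$ is surjective with kernel~$\Ker(d_H|_{\Omega_{n,p}^N})\cap\AA_{n,p}^N$, which is the content of the stated identity.

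The main obstacle lies entirely in Theorem~\ref{theorem:exact_div_free_complex} itself; once it is in hand, the rest is a formal diagram chase. The breakdown at~$N=1$ is also transparent from this angle: for~$N=1$ the subspace~$\AA_{n,p}^1$ is zero for~$n\geq 1$ (a single node cannot simultaneously be a root and carry a self-loop), so~$H_0(\AA_{\bullet,p}^1)\neq 0$, and the classes of~$\atree1101$ and~$\atree1111$ surface exactly as the elements of~$\Ker(d_H|_{\widetilde{\Omega}^1})$ that fail to lift to cycles of~$\Omega^1$.
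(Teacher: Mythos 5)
Your proof is correct, and it reaches the statement by a genuinely different formal route than the paper. The paper's own proof is two lines: it observes, exactly as you do, that $d_H$ cannot destroy a $1$-loop, but uses this only to conclude that the projection $\pi\colon\Omega_{n,p}^N\to\widetilde{\Omega}_{n,p}^N$ intertwines $d_H$, whence $\Img(d_H|_{\widetilde{\Omega}_{n+1,p}^N})=\pi\bigl(\Img(d_H|_{\Omega_{n+1,p}^N})\bigr)$; combining this single identity with Theorems~\ref{theorem:exact_aro_var_bi} and~\ref{theorem:exact_div_free_complex}, which convert all the kernels in sight into images, finishes the argument immediately. You instead package the same two inputs as a short exact sequence of complexes, extract the acyclicity of the subcomplex $\AA_{\bullet,p}^N$ from the long exact sequence, and then lift cycles to prove surjectivity of $\Ker(d_H|_{\Omega_{n,p}^N})\to\Ker(d_H|_{\widetilde{\Omega}_{n,p}^N})$. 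Both arguments rest on identical ingredients (the two exactness theorems plus the persistence of $1$-loops under $d_H$), so neither is more general, but yours is longer than necessary for the theorem as stated; in exchange it yields the additional fact, not recorded in the paper, that the $1$-looped subcomplex is itself exact for $N\neq 1$, and it locates the $N=1$ obstruction cleanly in $H_0(\AA_{\bullet,p}^1)$. Your reading of the quotient notation as $\Ker(d_H|_{\Omega_{n,p}^N})/\bigl(\Ker(d_H|_{\Omega_{n,p}^N})\cap\AA_{n,p}^N\bigr)$ is the intended one.
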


\begin{proof}[Proof of Theorem~\ref{theorem:Kernel_divergence_free}]
As~$d_H$ does not decrease the number of~$1$-loops in a forest, the image of~$d_H$ satisfies
$$\Img(d_H|_{\widetilde{\Omega}_{n+1,p}^N})=\Img(d_H|_{\Omega_{n+1,p}^N})/\AA_{n,p}^N.$$
Theorem~\ref{theorem:exact_aro_var_bi} and Theorem~\ref{theorem:exact_div_free_complex} yield the desired identity.
\end{proof}

A generating set of the solenoidal forms~$\widetilde{\Psi}^N$ of order~$N>1$ is obtained by deleting the~$1$-loops in the generating set~$\{d_H \wedge \gamma, \gamma\in \FF_2^N\}$ of~$\Psi^N$.
For instance, for order~$N=3$, we derive from the element~\eqref{equation:solenoidal_order_three} that the solenoidal forms in the divergence-free context are given by
$$\widetilde{\Psi}^3=\Span(\atree2002 \atree1101-\atree3101).$$
We give generators of the solenoidal forms~$\widetilde{\Psi}^N$ for the first orders in Appendix~\ref{section:solenoidal_forms}.
This explicit description of solenoidal forms is especially useful in the numerical study of volume-preserving integrators, as discussed in Subsection~\ref{section:application_vp_integrators}.

We enumerate in Subsection~\ref{section:combinatorics_Robert} the dimensions of the~$\Omega_{n,p}^N$ and~$\widetilde{\Omega}_{n,p}^N$ in the first two rows of the bicomplex and deduce the dimension of~$\Psi^N$ in Theorem~\ref{theorem:counting_standard} and of~$\widetilde{\Psi}^N$ in Theorem~\ref{theorem:counting_div_free}.
A surprising fundamental result is that the solenoidal forms in~$\Psi^N$ are enumerated by the difference between the number of aromatic trees in~$\Omega_{1}^N$ and the number of aromas with 1-loops~$\mathring{\Omega}_{0}^N$ (see Table~\ref{table:dimensions_solenoidal} for examples).

\begin{table}[!htb]
	\setcellgapes{3pt}
	\centering
	\begin{tabular}{|c|c|c|c|c|c|c|c|c|c|c|c|c|c|c|}
	\hline
	$N$ &$1$ &$2$ &$3$ &$4$ &$5$ &$6$ &$7$ &$8$ &$9$ &$10$ &$11$ &$12$ &$13$ &$14$\\
	\hline
	$|\Omega_{1}^N|$ &$1$ &$2$ &$6$ &$16$ &$45$ &$121$ &$338$ &$929$ &$2598$ &$7261$ &$20453$ &$57738$ &$163799$ &$465778$\\
	\hline
	$|\mathring{\Omega}_{0}^N|$ &$1$ &$2$ &$5$ &$13$ &$34$ &$90$ &$243$ &$660$ &$1818$ &$5045$ &$14102$ &$39639$ &$111982$ &$317533$\\
	\hline
	$|\Psi^N|$ &$0$ &$0$ &$1$ &$3$ &$11$ &$31$ &$95$ &$269$ &$780$ &$2216$ &$6351$ &$18099$ &$51817$ &$148245$\\
	\hline
	$|\widetilde{\Psi}^N|$ &$1$ &$0$ &$1$ &$2$ &$7$ &$16$ &$48$ &$123$ &$346$ &$937$ &$2626$ &$7284$ &$20533$ &$57804$\\
	\hline
	\end{tabular}
	\caption{Dimensions of the space of solenoidal forms in both contexts for the first orders~$N$ (see Theorems~\ref{theorem:counting_standard} and~\ref{theorem:counting_div_free}).
	Note how~$|\Psi^N|=|\Omega_{1}^N|-|\mathring{\Omega}_{0}^N|$.}
	\label{table:dimensions_solenoidal}
	\setcellgapes{1pt}
\end{table}

\section{The aromatic bicomplex}
\label{section:aromatic_bicomplex}

This section is devoted to the study of the aromatic bicomplex.
First, we introduce the Euler operators and prove the exactness of the variational chain in Subsection~\ref{section:Euler_operators}.
We define the horizontal and vertical homotopy operators and prove the exactness of the aromatic bicomplex in Subsection~\ref{section:aromatic_bicomplex_theorem_proof}.
We study the aromatic bicomplex in the divergence-free context in Subsection~\ref{section:divergence-free_bicomplex}.

\subsection{The Euler operators and the variational complex}
\label{section:Euler_operators}

To define the Euler operators, we extend Definition~\ref{definition:stitch_replace} to allow one to detach edges and graft them back.
\begin{definition}
\label{definition:general_stitch}
A graph~$\gamma$ is a marked aromatic forest if it is an aromatic forest with exactly one of its node that holds the symbol~$\diamond$. We write~$\FF_{n,p}^\diamond$ the set of marked aromatic forests with~$n$ roots and~$p$ covertices.
Given~$\gamma\in \FF_{n,p}^\diamond$,~$\gamma^\diamond\in \FF_{n,p}$ is the same forest~$\gamma$ where the symbol~$\diamond$ is removed.

Let~$\gamma\in \FF_{n,p}$ or~$\gamma\in \FF_{n,p}^\diamond$, where~$v_\diamond\in V$ is the node with the symbol~$\diamond$. Let~$R_0\subset R$ be a given subset of roots that we call the set of detached nodes.
For~$\gamma\in \FF_{n,p}^\diamond$,~$r\in R$ and~$u\in V$ with~$u\neq v_\diamond$,~$D^{r\rightarrow u} \gamma\in \FF_{n-1,p}^\diamond$ is the marked aromatic forest obtained by adding the edge linking~$r$ to~$u$ to~$\gamma$. The set of detached roots of~$D^{r\rightarrow u} \gamma$ is~$R_0$ if~$r\notin R_0$ and~$R_0\setminus \{r\}$ else.
We define~$$D^r \gamma=\sum_{u\in V\setminus \{v_\diamond\}} D^{r\rightarrow u} \gamma.$$
Let~$q$ be a non-negative integer,~$u\in V$ with~$u\neq v_\diamond$ if~$\gamma\in \FF_{n,p}^\diamond$, and~$I\subset R_0$. We define~$D^I$ and~$D^{I\rightarrow u}$ by 
$$D^I \gamma=\sum_{\varphi\colon I\rightarrow V} \prod_{w\in I} D^{w\rightarrow \varphi(w)} \gamma,
\quad
D^{I\rightarrow u} \gamma=\prod_{w\in I} D^{w\rightarrow u} \gamma,
$$
and~$D^q$ and~$D^{q\rightarrow u}$ by
$$D^q \gamma=\sum_{I\subset R_0, \abs{I}=q} D^I \gamma,
\quad
D^{q\rightarrow u} \gamma=\sum_{I\subset R_0, \abs{I}=q} D^{I\rightarrow u}\gamma.
$$

Let~$\gamma\in \FF_{n,p}$ and~$v\in V$, then~$\gamma_{v^{\starsymbol}}\in \FF_{n+\abs{\Pi(v)},p}$ is the graph obtained by cutting off the edges of~$\gamma$ pointing to~$v$ and placing the newly obtained roots after the roots in~$R$ (in an arbitrary order). In addition, we add a symbol~$\starsymbol$ on the node~$v$ and we fix~$R_0=\Pi(v)$ as the set of detached nodes.
\end{definition}

The tools from Definition~\ref{definition:general_stitch} are extended by linearity. In simple words, the~$\diamond$ detaches the predecessors of a given node~$v_\diamond$ and put them in a set~$R_0$. As long as the symbol~$\diamond$ is present, the grafting operators~$D^I$ and~$D^q$ graft the detached nodes on every node except~$v_\diamond$.
In particular, the following operation is trivial:
$$D^{\abs{\Pi(v)}\rightarrow v}(\gamma_{v^{\starsymbol}})^{\starsymbol}=\gamma.$$

\begin{ex*}
Consider the forest~$\gamma=\,\includegraphics[scale=0.48]{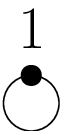}\, \,\includegraphics[scale=0.48]{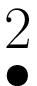}\,$ that we label for the sake of clarity.
The marked forest~$\gamma_{1^{\starsymbol}}$ has the set of detached nodes~$R_0=\{1\}$ and satisfies
$$
\gamma_{1^{\starsymbol}}=\,\includegraphics[scale=0.48]{Aromatic_forms/Labtree_1101.eps}\,\,\includegraphics[scale=0.48]{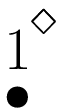}\,  ,\quad
(D\gamma_{1^{\starsymbol}})^{\diamond}=\,\includegraphics[scale=0.48]{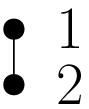}\,  ,\quad
D(\gamma_{1^{\starsymbol}})^{\diamond}=\,\includegraphics[scale=0.48]{Aromatic_forms/Labtree_2101.eps}\,+\,\includegraphics[scale=0.48]{Aromatic_forms/Labtree_1001.eps}\, \,\includegraphics[scale=0.48]{Aromatic_forms/Labtree_1101.eps}\,  .
$$
\end{ex*}

For simplicity, we write~$\gamma_{v\rightarrow \Circled{k}^{\starsymbol}}$ for~$(\gamma_{v\rightarrow \Circled{k}})_{\Circled{k}^{\starsymbol}}$, and~$D=D^1$ in the rest of the paper.
Note that, in general,~$D^2$ and~$D\circ D$ are different operators. Instead, straightforward combinatorics yield
\begin{equation}
\label{equation:identity_composition_derivatives}
\binom{n}{p} D^n= D^p D^{n-p}.
\end{equation}
However, for~$u$,~$v\in R$, the operations~$D^u$ and~$D^v$ commute.

\begin{remark}
\label{remark:clarification_notation}
The notations introduced in Definition~\ref{definition:general_stitch} closely depend on the forest~$\gamma$ they are applied to. For the sake of clarity, in the rest of the paper, the notations shall always relate to the original forest denoted~$\gamma$.
For instance, in~$(-1)^{\abs{\Pi(v)}} \gamma_{v^{\starsymbol}}$,~$\Pi(v)$ denotes the number of predecessors of~$v$ in~$\gamma$, not in~$\gamma_{v^{\starsymbol}}$.
\end{remark}

In the context of differential geometry, the Euler operator describes the differential forms that are divergences~\cite{Olver93aol}. It has a variety of applications such as the computation of conservation laws for PDEs~\cite{Hereman05cad,Hereman07cad,Poole10tho}.
We define a similar operator for aromatic forms.
\begin{definition}
For~$\gamma\in \FF_{n,p}$ and~$v\in V$, the Euler operators~$\Eulerop_v \colon \FF_{n,p}\rightarrow \Omega_{n,p}$ and~$\Eulerop_v^{\circ}\colon \FF_{0}\rightarrow \Omega_{0,1}$ are given by
$$
\Eulerop_v \gamma=(-1)^{\abs{\Pi(v)}} (D^{\Pi(v)} \gamma_{v^{\starsymbol}})^{\starsymbol},
\quad
\Eulerop_v^{\circ} \gamma=(-1)^{\abs{\Pi(v)}} (D^{\Pi(v)} \gamma_{v\rightarrow \Circled{1}^{\starsymbol}})^{\starsymbol}
$$
and are extended by linearity on~$\Omega_{n,p}$.
The Euler operator~$\Eulerop \colon \Omega_{n,p}\rightarrow \Omega_{n,p}$ and its variant~$\Eulerop^{\circ}\colon \Omega_{0}\rightarrow \Omega_{0,1}$ are given by
$$
\Eulerop\gamma=\sum_{v\in V} \Eulerop_v \gamma, \quad
\Eulerop^{\circ}\gamma=\sum_{v\in V} \Eulerop_v^{\circ} \gamma.
$$
\end{definition}

The output of~$\Eulerop^{\circ}$ on the forms of first orders is
$$\Eulerop^{\circ} \atree1001=0,\quad
\Eulerop^{\circ} \atree2001=2\atree2013,\quad
\Eulerop^{\circ} \atree2002=-2\atree2013,\quad
\Eulerop^{\circ} \atree1001\atree1001=-2\atree2013.$$
From these examples, we observe that the Euler operator vanishes on aromatic forms that are divergences:
$$\Eulerop^{\circ} d_H \atree1101=0,\quad
\Eulerop^{\circ} d_H \atree2101=0,\quad
\Eulerop^{\circ} d_H \atree1001\atree1101=0.$$

\begin{proposition}
\label{proposition:L_d_vanish}
Let~$\gamma\in \FF_{n,p}$ and~$v\in V$, the Euler operators satisfy
$$\Eulerop_v d_H\gamma=0,\quad \Eulerop d_H\gamma=0$$
and for~$p=0$,~$\Eulerop^{\circ}$ satisfies~$\Eulerop^{\circ} d_H\gamma=0.$
\end{proposition}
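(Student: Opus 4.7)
The plan is to prove $\Eulerop_v d_H\gamma = 0$ for each node $v$ individually via a sign-reversing pairing of the defining terms; summing over $v$ will then yield $\Eulerop d_H\gamma=0$, and the claim for $\Eulerop^{\circ}$ follows from essentially the same pairing after the cosmetic replacement of $v$ by the covertex $\Circled{1}$.

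First I would expand $d_H\gamma = \sum_{u\in V}D^{r_n\to u}\gamma$ and apply $\Eulerop_v$ termwise. The crucial observation is that inside $D^{r_n\to u}\gamma$ the predecessor set of $v$ equals $P := \Pi_\gamma(v)$ whenever $u\neq v$, whereas if $u=v$ it becomes $P\cup\{r_n\}$, since grafting $r_n$ onto $v$ has just turned $r_n$ into a fresh predecessor of $v$. Consequently the sign $(-1)^{|\Pi(v)|}$ in the definition of $\Eulerop_v$ picks up an extra factor $-1$ in passing from the case $u\neq v$ to $u=v$.

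Next I would exhibit the bijection. A term with $u\neq v$ is indexed by a pair $(u,\varphi)$, where $u\in V\setminus\{v\}$ records where $r_n$ has been grafted and $\varphi\colon P\to V\setminus\{v\}$ records where each detached element of $P$ is regrafted; the resulting forest is obtained from $\gamma$ by detaching $P$ from $v$, regrafting it via $\varphi$, and attaching $r_n$ at $u$. A term with $u=v$ is indexed by a single map $\psi\colon P\cup\{r_n\}\to V\setminus\{v\}$ and produces the forest in which $r_n$ is attached at $\psi(r_n)$ and each element of $P$ at $\psi|_P$. The correspondence $(u,\varphi)\longleftrightarrow\psi$ with $\psi(r_n)=u$ and $\psi|_P=\varphi$ is a bijection, and both paired terms yield the same underlying aromatic forest. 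Since the associated signs differ by $-1$, every such pair cancels, so $\Eulerop_v d_H\gamma=0$. Summing over $v$ proves $\Eulerop d_H\gamma=0$, and $\Eulerop^{\circ}d_H\gamma=0$ is obtained by running the same pairing after first replacing $v$ by a fresh covertex $\Circled{1}$; the regrafting step already avoids the marked node by the very definition of $D$ on marked forests, so nothing else changes.

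The delicate point to verify is that the two paired constructions genuinely produce identical graphs after removal of the $\starsymbol$; this requires some bookkeeping, notably in the edge case $v=r_n$, where grafting $r_n$ onto itself creates a transient self-loop that is removed again when the enlarged set $P\cup\{r_n\}$ is detached. Compatibility of the whole argument with the projection onto $\Omega_{n,p}$ is automatic, since the pairing already lives at the level of $\Span(\FF_{n,p})$, prior to any antisymmetrisation by $\wedge$.
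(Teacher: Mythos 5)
Your argument is correct and is essentially the paper's own proof: the paper likewise expands $d_H\gamma=\sum_{u}D^{r_n\to u}\gamma$, splits off the term $u=v$ (whose sign flips because $r_n$ becomes an extra predecessor of $v$), and cancels it against the sum over $u\neq v$, which it writes compactly as the operator identity $\sum_{u\neq v}(D^{\Pi(v)}D^{r_n\to u}\gamma_{v^{\starsymbol}})^{\starsymbol}=(D^{\Pi(v)}D^{r_n}\gamma_{v^{\starsymbol}})^{\starsymbol}$ rather than as your explicit bijection $(u,\varphi)\leftrightarrow\psi$. Your handling of the edge case $v=r_n$ and of the covertex variant $\Eulerop^{\circ}$ is consistent with the definitions, so no gap remains.
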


\begin{proof}
Let~$\gamma\in \FF_{n,p}$ and~$r_n$ be its last root, then
\begin{align*}
\Eulerop_v  d_H\gamma&=\sum_{u\in V} \Eulerop_v D^{r_n\rightarrow u} \gamma\\
&=(-1)^{\abs{\Pi(v)}+1} (D^{\Pi(v)} D^{r_n} \gamma_{v^{\starsymbol}})^{\starsymbol}
+\sum_{u\neq v} (-1)^{\abs{\Pi(v)}} (D^{\Pi(v)} D^{r_n\rightarrow u} \gamma_{v^{\starsymbol}})^{\starsymbol}
=0,
\end{align*}
where the two terms correspond to the cases~$u=v$ and~$u\neq v$, and where we used the convention of Remark~\ref{remark:clarification_notation}. The identities with~$\Eulerop$ and~$\Eulerop^{\circ}$ are direct consequences.
\end{proof}

We know that the composition of the two maps~$\Eulerop^{\circ}$ and~$d_H$ vanishes. One is then interested in a necessary and sufficient condition for an aromatic form~$\gamma\in \Omega_0$ to be a divergence. The following chain is called the variational complex.
\begin{equation}
\label{equation:variational_complex}
\begin{tikzcd}
	\Omega_{1} \arrow{r}{d_H} & \Omega_{0}\arrow{r}{\Eulerop^{\circ}} & \Omega_{0,1}
\end{tikzcd}
\end{equation}
The complex~\eqref{equation:variational_complex} is exact if~$\Img(d_H)=\Ker(\Eulerop^{\circ})$; that is, if~$\gamma\in \Omega_0$ is a divergence if and only if~$\Eulerop^{\circ} \gamma=0$.
A fundamental question in variational calculus~\cite{Olver93aol} is the exactness of this chain (in the context of differential forms).
We prove in the rest of this subsection the exactness of the variational complex~\eqref{equation:variational_complex} in the context of aromatic forms.
We rely on the use of the Euler operators of higher orders and homotopy operators.
To the best of our knowledge, the use of such operators on aromatic forests is completely new.
\begin{definition}
For~$\gamma\in\FF_{n,p}$, the Euler operator~$\Eulerop^q_v \gamma$ of order~$q\geq 0$ on~$v\in V$ is
$$
\Eulerop^q_v \gamma=(-1)^{\abs{\Pi(v)}-q} (D^{\abs{\Pi(v)}-q} \gamma_{v^{\starsymbol}})^{\starsymbol},
$$
that we extend on~$\Omega_{n,p}$ by linearity.
The higher Euler operators are
$$
\Eulerop^q \gamma=\sum_{v\in V} \Eulerop^q_v \gamma.
$$
\end{definition}

A fundamental result for our analysis is that any aromatic forest can be rewritten with the Euler operators. Note that all appearing series are finite, as~$\Eulerop^q_v \gamma=0$ if~$q> \abs{\gamma}$. Note also that~$\Eulerop^0_v=\Eulerop_v$.
\begin{proposition}
\label{prop:higher_Euler_operators_Omega}
The Euler operators satisfy for all~$\gamma\in\FF_{n,p}$ and all vertices~$v\in V$,
\begin{equation}
\label{equation:decomposition_Euler_op_v}
\gamma=\sum_{q=0}^\infty D^{q} \Eulerop^q_v \gamma.
\end{equation}
In particular, we have
\begin{equation}
\label{equation:decomposition_Euler_op}
\abs{\gamma}\gamma=\sum_{q=0}^\infty D^q \Eulerop^q \gamma.
\end{equation}
In addition, if~$\gamma$ satisfies~$\Eulerop^{p}\gamma=0$ for~$p=0,\dots,n-1$, then~$\gamma=D^n(h^{(n)}\gamma)$, where
$$
h^{(n)}\gamma=\frac{1}{\abs{\gamma}}\sum_{p=n}^\infty \binom{p}{n}^{-1} D^{p-n} \Eulerop^{p} \gamma.
$$
\end{proposition}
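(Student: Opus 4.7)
The plan is to prove the three identities in sequence, with the first doing the real combinatorial work and the other two following essentially by algebraic manipulation.

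For the pointwise identity~\eqref{equation:decomposition_Euler_op_v}, fix a node $v$ and write $k=|\Pi(v)|$. I would unfold $D^q\Eulerop_v^q\gamma$ in full. The factor $\Eulerop_v^q\gamma=(-1)^{k-q}(D^{k-q}\gamma_{v^{\starsymbol}})^{\starsymbol}$ begins by cutting every edge into $v$, creating $k$ new detached roots $R_0=\Pi(v)$ and marking $v$ with a diamond; the inner $D^{k-q}$ then grafts $k-q$ of these back to arbitrary targets in $V\setminus\{v\}$. Removing the diamond is precisely what permits the outer $D^q$ to graft the remaining $q$ detached roots onto \emph{any} node of $V$, including $v$. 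Every summand of $\sum_q D^q\Eulerop_v^q\gamma$ is thus indexed by a partition $R_0=I\sqcup J$ with $|I|=k-q$ together with maps $\varphi\colon I\to V\setminus\{v\}$ and $\psi\colon J\to V$, carries the sign $(-1)^{k-q}=(-1)^{|I|}$, and produces the forest in which every $w\in R_0$ is reattached to $(\varphi\sqcup\psi)(w)$.

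The key step is to regroup by the total grafting function $\chi=\varphi\sqcup\psi\colon R_0\to V$. For fixed $\chi$, the admissible decompositions are exactly those with $I\subset R_0\setminus\chi^{-1}(v)$, so the signed count reduces to
$$
\sum_{I\subset R_0\setminus\chi^{-1}(v)}(-1)^{|I|}=(1-1)^{|R_0\setminus\chi^{-1}(v)|},
$$
which vanishes unless $\chi^{-1}(v)=R_0$. The one surviving $\chi$ sends every cut predecessor back onto $v$, recovering $\gamma$ and hence proving~\eqref{equation:decomposition_Euler_op_v}.

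Identity~\eqref{equation:decomposition_Euler_op} follows at once by summing~\eqref{equation:decomposition_Euler_op_v} over $v\in V$: the left side becomes $|V|\gamma=|\gamma|\gamma$, and on the right the sum over $v$ passes inside each $D^q$ to form $\Eulerop^q$. For the homotopy identity, I would apply $D^n$ to $h^{(n)}\gamma$, invoke~\eqref{equation:identity_composition_derivatives} in the form $D^n D^{p-n}=\binom{p}{n}D^p$ to cancel the binomial coefficient in the definition of $h^{(n)}$, and obtain
$$
D^n h^{(n)}\gamma=\frac{1}{|\gamma|}\sum_{p=n}^\infty D^p\Eulerop^p\gamma.
$$
The hypothesis $\Eulerop^p\gamma=0$ for $p<n$ lets the sum start at $p=0$, and~\eqref{equation:decomposition_Euler_op} then delivers $D^n h^{(n)}\gamma=\gamma$. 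The main obstacle is the bookkeeping in~\eqref{equation:decomposition_Euler_op_v}, in particular tracking which roots remain in $R_0$ through the removal of the diamond and recognising that the asymmetry between the inner $D^{k-q}$ (which forbids $v$ as a target) and the outer $D^q$ (which allows it) is exactly what produces the sign-cancelling binomial factor; once this accounting is set up the rest is formal.
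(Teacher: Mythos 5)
Your proof is correct, and the argument for the central identity \eqref{equation:decomposition_Euler_op_v} takes a genuinely different route from the paper's. The paper first proves a Leibniz-type rule (Lemma~\ref{lemma:Leibniz_divergence}) for commuting the grafting operator $D$ past the graft-onto-$v$ operators, iterates it to expand $\Eulerop_v\gamma$ into the telescoping sum \eqref{equation:intermediate_expression_decomposition_Euler_op}, and then resums via the composition formula \eqref{equation:composition_derivatives} and binomial identities. You instead expand $\sum_q D^q\Eulerop^q_v\gamma$ in one shot as a signed sum over all regraftings of the detached predecessors $R_0=\Pi(v)$, indexed by a partition $R_0=I\sqcup J$ (where $I$ is grafted while the mark still forbids $v$ as a target, contributing the sign $(-1)^{\abs{I}}$) together with a total regrafting map $\chi$; collecting terms by $\chi$ reduces the coefficient to $\sum_{I\subset R_0\setminus\chi^{-1}(v)}(-1)^{\abs{I}}=(1-1)^{\abs{R_0\setminus\chi^{-1}(v)}}$, which annihilates everything except the unique regrafting $D^{\abs{\Pi(v)}\rightarrow v}(\gamma_{v^{\starsymbol}})^{\starsymbol}=\gamma$. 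This inclusion--exclusion argument bypasses Lemma~\ref{lemma:Leibniz_divergence} and the binomial resummation entirely; what it requires instead, and what you correctly flag, is the bookkeeping that the summand depends only on $\chi$ and not on the split $(I,J)$ (true because graftings of distinct detached roots commute) and that the asymmetry in allowed targets before and after removing the mark is exactly what creates the vanishing alternating sum. Your treatment of \eqref{equation:decomposition_Euler_op} (summing over $v\in V$) and of the final claim (cancelling the binomial via $D^nD^{p-n}=\binom{p}{n}D^p$ from \eqref{equation:identity_composition_derivatives}) coincides with the paper's.
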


The proof shares similarities with the approach of~\cite{Hereman07cad} and uses the following intermediate result in the spirit of the Leibniz rule.
\begin{lemma}
\label{lemma:Leibniz_divergence}
Let~$\gamma\in \FF_{n,p}$ and~$v\in V$, let three integers~$p$,~$q$,~$k$ such that~$p+q+k=\abs{\Pi(v)}-1$. Then, the following holds:
\begin{align*}
(q+1)D^{q+1}D^{k\rightarrow v}(D^p\gamma_{v^{\starsymbol}})^{\starsymbol}
&=(k+1)D^{q}D^{k+1\rightarrow v}(D^p\gamma_{v^{\starsymbol}})^{\starsymbol}
+(p+1)D^{q}D^{k\rightarrow v}(D^{p+1}\gamma_{v^{\starsymbol}})^{\starsymbol}. \nonumber
\end{align*}
More precisely, we have
\begin{equation}
\label{equation:composition_derivatives}
D^q D^{k\rightarrow v} (D^{p} \gamma_{v^{\starsymbol}})^{\starsymbol}
=\sum_{n=0}^k (-1)^{k-n} \binom{q+n}{q}\binom{p+k-n}{p} D^{q+n} (D^{p+k-n} \gamma_{v^{\starsymbol}})^{\starsymbol}.
\end{equation}
\end{lemma}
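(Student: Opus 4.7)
The plan is to prove the two claims in order: first the Leibniz-like identity, which admits a clean combinatorial interpretation, and then the closed form~\eqref{equation:composition_derivatives}, which follows by induction on~$k$ from the Leibniz identity together with elementary binomial manipulations.

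For the first identity I would unfold the composite operators according to Definition~\ref{definition:general_stitch} so that each side becomes a sum over choices of disjoint subsets $A,B,C\subseteq R_0 = \Pi(v)$ with prescribed sizes together with target assignments in~$V$. The LHS $(q+1)D^{q+1}D^{k\to v}(D^p\gamma_{v^{\starsymbol}})^{\starsymbol}$ chooses $|A|=p$ grafted onto non-$v$ vertices (the $\starsymbol$ forbids $v$ at that stage), $|B|=k$ grafted onto $v$, and $|C|=q+1$ grafted onto any vertex. Fix a resulting forest and let $m$ (respectively $n$) denote the number of originally-detached roots eventually attached to $v$ (respectively to non-$v$ vertices), so that $m+n = p+k+q+1$. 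The multiplicity of the configuration in the LHS is
\[
(q+1)\binom{n}{p}\binom{m}{k},
\]
where $\binom{n}{p}$ counts how the label "$A$" is assigned among the $n$ non-$v$ grafts and $\binom{m}{k}$ counts the distribution of "$B$" among the $m$ grafts to $v$. The two RHS terms have multiplicities $(k+1)\binom{n}{p}\binom{m}{k+1}$ and $(p+1)\binom{n}{p+1}\binom{m}{k}$. Dividing through by $\binom{n}{p}\binom{m}{k}$ the identity collapses to $(q+1) = (m-k)+(n-p)$, which is exactly $m+n = p+k+q+1$.

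For~\eqref{equation:composition_derivatives} I would induct on~$k$, with the trivial base $k=0$. Rearranging the Leibniz identity at level~$k$ gives
\[
(k+1)D^q D^{k+1\to v}(D^p\gamma_{v^{\starsymbol}})^{\starsymbol} = (q+1)D^{q+1}D^{k\to v}(D^p\gamma_{v^{\starsymbol}})^{\starsymbol} - (p+1)D^q D^{k\to v}(D^{p+1}\gamma_{v^{\starsymbol}})^{\starsymbol}.
\]
Applying the induction hypothesis to each term, shifting the summation index in the first sum by $m=n+1$ and using the binomial identities $(q+1)\binom{q+m}{q+1} = m\binom{q+m}{q}$ and $(p+1)\binom{p+k+1-n}{p+1} = (k+1-n)\binom{p+k+1-n}{p}$, the coefficient of $D^{q+n}(D^{p+k+1-n}\gamma_{v^{\starsymbol}})^{\starsymbol}$ for $1\le n\le k$ becomes $(-1)^{k+1-n}\bigl[n + (k+1-n)\bigr]\binom{q+n}{q}\binom{p+k+1-n}{p} = (-1)^{k+1-n}(k+1)\binom{q+n}{q}\binom{p+k+1-n}{p}$, while the boundary contributions at $n=0$ and $n=k+1$ already carry the factor~$k+1$. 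Dividing by $k+1$ yields~\eqref{equation:composition_derivatives} at level $k+1$.

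The main obstacle is purely bookkeeping: tracking the index shifts, the alternating signs $(-1)^{k-n}$, and the convention of Remark~\ref{remark:clarification_notation} that $\Pi(v)$ in every intermediate expression refers to the original forest $\gamma$. The combinatorial content reduces to the trivial balance $(m-k)+(n-p) = q+1$, which makes transparent the meaning of the Leibniz identity: the $q+1$ "free" grafts on the LHS must end up either on $v$ (contributing to the $(k+1)$-term on the RHS) or off $v$ (contributing to the $(p+1)$-term).
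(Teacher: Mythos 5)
Your proof is correct. For the closed formula~\eqref{equation:composition_derivatives} you and the paper do the same thing: the paper only says ``by induction on~$k$'' and your inductive step (rearranging the Leibniz identity, applying the hypothesis at parameters $(q+1,k,p)$ and $(q,k,p+1)$, shifting the index, and using $(q+1)\binom{q+m}{q+1}=m\binom{q+m}{q}$ and $(p+1)\binom{p+k+1-n}{p+1}=(k+1-n)\binom{p+k+1-n}{p}$ to make the factor $k+1$ appear) is exactly the computation the paper leaves implicit. For the Leibniz identity itself, however, your route is organized differently. The paper uses the composition identity $\binom{n}{p}D^n=D^pD^{n-p}$ to peel a single distinguished root $u$ off the block of $q+1$ free grafts, splits $D^u$ according to whether $u$ lands on $v$ or not (the latter case being absorbed into the pre-de-starring derivative $D^{S_3\cup u}\gamma_{v^{\starsymbol}}$), and then recombines the partitions to produce the factors $k+1$ and $p+1$. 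You instead fix a final grafting configuration $\varphi\colon\Pi(v)\to V$, count its multiplicity on each side as a product of binomial coefficients, and reduce the identity to the balance $(m-k)+(n-p)=q+1$ with $m+n=|\Pi(v)|$. Both arguments carry the same combinatorial content, but yours makes the meaning of the identity more transparent (the $q+1$ unconstrained grafts split according to whether they land on $v$ or not), while the paper's one-step peeling has the advantage of staying entirely at the level of the operator calculus it has just set up, which is the language reused in the homotopy computations of Propositions~\ref{proposition:horizontal_homotopy} and~\ref{proposition:divergence-free_horizontal_homotopy}. The only point to watch in your version is that the multiplicity count should be read at the level of labelled grafting assignments (two different $\varphi$ may yield isomorphic forests), but since the grouping by $\varphi$ is the same on both sides this causes no loss of rigour.
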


\begin{proof}[Proof of Proposition~\ref{prop:higher_Euler_operators_Omega}]
Let~$\gamma\in\FF_{n,p}$. With the help of Lemma~\ref{lemma:Leibniz_divergence}, we get
\begin{align*}
\Eulerop_v\gamma
&=(-1)^{\abs{\Pi(v)}} (D^{\abs{\Pi(v)}} \gamma_{v^{\starsymbol}})^{\starsymbol}\\
&= (\gamma_{v^{\starsymbol}})^{\starsymbol} \ind_{\abs{\Pi(v)}=0}
- (-1)^{\abs{\Pi(v)}-1} (D^{\abs{\Pi(v)}} \gamma_{v^{\starsymbol}})^{\starsymbol} \ind_{\abs{\Pi(v)}\geq 1}\\
&=\gamma \ind_{\abs{\Pi(v)}=0}
-\frac{(-1)^{\abs{\Pi(v)}-1}}{\abs{\Pi(v)}} D (D^{\abs{\Pi(v)}-1} \gamma_{v^{\starsymbol}})^{\starsymbol}
+\frac{(-1)^{\abs{\Pi(v)}-1}}{\abs{\Pi(v)}} D^{1\rightarrow v}(D^{\abs{\Pi(v)}-1} \gamma_{v^{\starsymbol}})^{\starsymbol}
\end{align*}
We apply the same reasoning to the last term
\begin{align*}
\frac{(-1)^{\abs{\Pi(v)}-1}}{\abs{\Pi(v)}} D^{1\rightarrow v}(D^{\abs{\Pi(v)}-1} \gamma_{v^{\starsymbol}})^{\starsymbol}
&=\gamma \ind_{\abs{\Pi(v)}= 1}
-\frac{(-1)^{\abs{\Pi(v)}-2}}{\abs{\Pi(v)}(\abs{\Pi(v)}-1)} D D^{1\rightarrow v}(D^{\abs{\Pi(v)}-2} \gamma_{v^{\starsymbol}})^{\starsymbol}\\
&+\frac{2(-1)^{\abs{\Pi(v)}-2}}{\abs{\Pi(v)}(\abs{\Pi(v)}-1)} D^{2\rightarrow v} (D^{\abs{\Pi(v)}-2} \gamma_{v^{\starsymbol}})^{\starsymbol}.
\end{align*}
Iterating this reasoning, we find
\begin{equation}
\label{equation:intermediate_expression_decomposition_Euler_op}
\Eulerop_v\gamma=\gamma
-\sum_{p=1}^\infty \frac{(-1)^{\abs{\Pi(v)}-p}}{p\binom{\abs{\Pi(v)}}{p}} D D^{p-1\rightarrow v} (D^{\abs{\Pi(v)}-p} \gamma_{v^{\starsymbol}})^{\starsymbol}.
\end{equation}
Using the formula~\eqref{equation:composition_derivatives} in~\eqref{equation:intermediate_expression_decomposition_Euler_op}, standard combinatorics~\cite{Merris03com} yield
\begin{align*}
\gamma&=\Eulerop_v\gamma + \sum_{p=1}^\infty \frac{(-1)^{\abs{\Pi(v)}-p}}{p\binom{\abs{\Pi(v)}}{p}} 
\sum_{n=0}^{p-1} (-1)^{p-n-1} (n+1) \binom{\abs{\Pi(v)}-n-1}{\abs{\Pi(v)}-p} D^{n+1} (D^{\abs{\Pi(v)}-n-1} \gamma_{v^{\starsymbol}})^{\starsymbol}\\
&=\Eulerop_v\gamma + 
\sum_{n=0}^\infty
\sum_{p=n+1}^\infty \frac{n+1}{p\binom{\abs{\Pi(v)}}{p}}  \binom{\abs{\Pi(v)}-n-1}{\abs{\Pi(v)}-p} D^{n+1} \Eulerop_v^{n+1}\gamma\\
&=\Eulerop_v\gamma + 
\sum_{n=1}^{\abs{\Pi(v)}}
\sum_{p=n}^{\abs{\Pi(v)}} \frac{n(\abs{\Pi(v)}-n)!(p-1)!}{\abs{\Pi(v)}!(p-n)!} D^{n} \Eulerop_v^{n}\gamma\\
&=\Eulerop_v\gamma + 
\sum_{n=1}^{\abs{\Pi(v)}} \binom{\abs{\Pi(v)}}{n}^{-1}
\sum_{p=0}^{\abs{\Pi(v)}-n} \binom{p+n-1}{p} D^{n} \Eulerop_v^{n}\gamma
=\sum_{n=0}^\infty D^{n} \Eulerop^n_v \gamma.
\end{align*}
We sum~\eqref{equation:decomposition_Euler_op_v} on all the nodes~$v\in V$ to obtain~\eqref{equation:decomposition_Euler_op}.
The last claim of Proposition~\ref{prop:higher_Euler_operators_Omega} is obtained by using the formula~\eqref{equation:identity_composition_derivatives} in~\eqref{equation:decomposition_Euler_op}.
\end{proof}

\begin{proof}[Proof of Lemma~\ref{lemma:Leibniz_divergence}]
Using the identity~\eqref{equation:identity_composition_derivatives}, we distribute the derivatives on~$v$ and the other nodes.
\begin{align*}
D^{q+1}D^{k\rightarrow v}(D^p\gamma_{v^{\starsymbol}})^{\starsymbol}
&= \frac{1}{q+1} D D^q D^{k\rightarrow v}(D^p\gamma_{v^{\starsymbol}})^{\starsymbol}\\
&=\frac{1}{q+1} \sum_{\underset{\abs{S_1}=q,\abs{S_2}=k,\abs{S_3}=p}{S_1\sqcup S_2\sqcup S_3\sqcup \{u\}=\Pi(v)}}  D^{u} D^{S_1} D^{S_2\rightarrow v} (D^{S_3}\gamma_{v^{\starsymbol}})^{\starsymbol}\\
&=\frac{1}{q+1} \sum_{\underset{\abs{S_1}=q,\abs{S_2}=k,\abs{S_3}=p}{S_1\sqcup S_2\sqcup S_3\sqcup \{u\}=\Pi(v)}} D^{S_1} \bigg[ D^{S_2\cup u \rightarrow v} (D^{S_3}\gamma_{v^{\starsymbol}})^{\starsymbol}
+  D^{S_2\rightarrow v} (D^{S_3\cup u}\gamma_{v^{\starsymbol}})^{\starsymbol}\bigg]\\
&=\frac{k+1}{q+1} \sum_{\underset{\abs{S_1}=q,\abs{S_2}=k+1,\abs{S_3}=p}{S_1\sqcup S_2\sqcup S_3=\Pi(v)}}  D^{S_1} D^{S_2 \rightarrow v} (D^{S_3}\gamma_{v^{\starsymbol}})^{\starsymbol}\\
&+\frac{p+1}{q+1} \sum_{\underset{\abs{S_1}=q,\abs{S_2}=k,\abs{S_3}=p+1}{S_1\sqcup S_2\sqcup S_3=\Pi(v)}}  D^{S_1} D^{S_2\rightarrow v} (D^{S_3}\gamma_{v^{\starsymbol}})^{\starsymbol}\\
&=\frac{k+1}{q+1} D^{q}D^{k+1\rightarrow v}(D^p\gamma_{v^{\starsymbol}})^{\starsymbol} +\frac{p+1}{q+1} D^{q}D^{k\rightarrow v}(D^{p+1}\gamma_{v^{\starsymbol}})^{\starsymbol}.
\end{align*}
We obtain the formula~\eqref{equation:composition_derivatives} by induction on~$k$.
\end{proof}

A direct consequence of Proposition~\ref{prop:higher_Euler_operators_Omega} is the exactness of the variational complex~\eqref{equation:variational_complex}.
\begin{theorem}
\label{theorem:exact_variational_chain}
For~$\gamma\in \FF_{0,1}$, let the variational homotopy operator~$h_V$ be
$$
h_V \gamma=\frac{1}{\abs{\gamma}} \gamma_{\Circled{1}\rightarrow \bullet},
$$
and for~$\gamma\in \FF_0$, let the horizontal homotopy operator~$h_H$ be
$$
h_H \gamma=\frac{1}{\abs{\gamma}} \sum_{q=1}^\infty \frac{1}{q} D^{q-1} \Eulerop^{q} \gamma.
$$
We extend the definition on~$\Omega_{0,1}$ and~$\Omega_0$ by linearity.
These operators satisfy for all~$\gamma\in\Omega_0$,
\begin{equation}
\label{equation:homotopy_equality_variational}
(d_H h_H +h_V \Eulerop^{\circ})\gamma=\gamma.
\end{equation}
In particular, the variational complex~\eqref{equation:variational_complex} is exact.
\end{theorem}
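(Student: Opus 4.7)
The plan is to derive~\eqref{equation:homotopy_equality_variational} by splitting the pointwise decomposition
\[|\gamma|\gamma=\sum_{q=0}^\infty D^q \Eulerop^q \gamma\]
from Proposition~\ref{prop:higher_Euler_operators_Omega} into two pieces and matching the $q=0$ term with $|\gamma|\,h_V\Eulerop^{\circ}\gamma$ and the $q\geq 1$ tail with $|\gamma|\,d_H h_H\gamma$. Exactness of the variational complex~\eqref{equation:variational_complex} then follows automatically: Proposition~\ref{proposition:L_d_vanish} already supplies $\Img(d_H)\subseteq\Ker(\Eulerop^{\circ})$, and~\eqref{equation:homotopy_equality_variational} forces every $\gamma\in\Ker(\Eulerop^{\circ})$ to equal $d_H h_H\gamma$.

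For the $q=0$ piece I work forest by forest. Fix $\gamma\in\FF_0$ and $v\in V^{\bullet}$. Unpacking definitions, $\Eulerop_v^{\circ}\gamma$ is built from $\gamma_{v\to\Circled{1}^{\starsymbol}}$ by grafting the detached predecessors of $v$ onto every node other than the starred $\Circled{1}$ and then erasing $\starsymbol$. Because that exclusion leaves $\Circled{1}$ without predecessors in the resulting element of $\Omega_{0,1}$, the subsequent replacement $\Circled{1}\to\bullet$ carried out by $h_V$ returns precisely the forest $(D^{|\Pi(v)|}\gamma_{v^{\starsymbol}})^{\starsymbol}$ appearing in $\Eulerop_v\gamma$. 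Since $|\Eulerop_v^{\circ}\gamma|=|\gamma|$, the definition of $h_V$ gives $h_V\Eulerop_v^{\circ}\gamma=\tfrac{1}{|\gamma|}\Eulerop_v\gamma$, and summing over $v$ yields $h_V\Eulerop^{\circ}\gamma=\tfrac{1}{|\gamma|}\Eulerop\gamma$ after linear extension.

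For the tail, when $\gamma\in\FF_0$ the form $\Eulerop^q\gamma$ has exactly $q$ roots, so $D^{q-1}\Eulerop^q\gamma\in\Omega_1$ has a single root. On such a one-root form $D$ coincides with $d_H$, and the combinatorial identity~\eqref{equation:identity_composition_derivatives} with $n=q$, $p=1$ rewrites $D\,D^{q-1}=qD^q$. Consequently $d_H D^{q-1}\Eulerop^q\gamma=qD^q\Eulerop^q\gamma$, the factors $1/q$ in the definition of $h_H$ cancel, and $d_H h_H\gamma=\tfrac{1}{|\gamma|}\sum_{q\geq 1}D^q\Eulerop^q\gamma$. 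Adding this to the first piece and invoking Proposition~\ref{prop:higher_Euler_operators_Omega} yields
\[h_V\Eulerop^{\circ}\gamma+d_H h_H\gamma=\frac{1}{|\gamma|}\Bigl(\Eulerop\gamma+\sum_{q=1}^\infty D^q\Eulerop^q\gamma\Bigr)=\gamma,\]
which is exactly~\eqref{equation:homotopy_equality_variational}. The only delicate step is the bookkeeping in the first piece: one must verify that the $\starsymbol$-exclusion really leaves $\Circled{1}$ childless, so that the replacement $\Circled{1}\to\bullet$ commutes with the preceding grafting. Once that is in place the rest is a direct application of Proposition~\ref{prop:higher_Euler_operators_Omega} together with the Leibniz-type identity~\eqref{equation:identity_composition_derivatives}.
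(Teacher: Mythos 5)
Your proposal is correct and follows essentially the same route as the paper's proof: both start from the decomposition $\abs{\gamma}\gamma=\sum_{q\geq 0}D^q\Eulerop^q\gamma$ of Proposition~\ref{prop:higher_Euler_operators_Omega}, identify the $q=0$ term with $\abs{\gamma}\,h_V\Eulerop^{\circ}\gamma$, and absorb the tail into $\abs{\gamma}\,d_Hh_H\gamma$ via $DD^{q-1}=qD^q$ from~\eqref{equation:identity_composition_derivatives}, with exactness then following from Proposition~\ref{proposition:L_d_vanish}. You merely make explicit two identifications that the paper leaves implicit (that $h_V\Eulerop_v^{\circ}\gamma=\tfrac{1}{\abs{\gamma}}\Eulerop_v\gamma$ and that $D$ agrees with $d_H$ on the one-rooted forms $D^{q-1}\Eulerop^q\gamma$), and both checks are carried out correctly.
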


\begin{proof}
Let~$\gamma\in \Omega_0$. From Proposition~\ref{prop:higher_Euler_operators_Omega}, we obtain 
$$
\gamma=\frac{1}{\abs{\gamma}} \sum_{q=0}^\infty D^q \Eulerop^q \gamma
=\frac{1}{\abs{\gamma}} \Eulerop \gamma
+\frac{1}{\abs{\gamma}} \sum_{q=1}^\infty \frac{1}{q} D D^{q-1} \Eulerop^q \gamma
=h_V \Eulerop^{\circ}\gamma +d_H h_H \gamma.
$$
Proposition~\ref{proposition:L_d_vanish} yields that~$\Img(d_H)\subset \Ker(\Eulerop)$.
If~$\gamma\in \Ker(\Eulerop)$, then the identity~\eqref{equation:homotopy_equality_variational} becomes~$\gamma=d_H (h_H \gamma)\in \Img(d_H)$. The exactness of the chain follows straightforwardly.
\end{proof}

We present an alternative horizontal homotopy operator on~$\Omega_0$ in Subsection~\ref{section:IBP}, with some examples of the outputs of both operators in Table~\ref{table:comparison_homotopy}.

\subsection{Exactness of the aromatic bicomplex}
\label{section:aromatic_bicomplex_theorem_proof}

This section is devoted to the proof of Theorem~\ref{theorem:exact_aro_var_bi}. We start by showing the aromatic bicomplex is indeed a bicomplex.
\begin{proof}[Proof of Proposition~\ref{proposition:derivatives_squared}]
Let~$\gamma\in\FF_{n,p}$, and~$(r_1,\dots,r_n)$ its roots. The horizontal derivative satisfies
\begin{align*}
d_H^2 \wedge \gamma
&=\frac{1}{n!}\wedge^{\circ} \sum_{\sigma\in \SS_{n}^{\bullet}} \varepsilon(\sigma) D^{r_{n-1}}D^{r_n} \sigma\gamma\\
&=\frac{1}{n!}\wedge^{\circ}\sum_{\sigma\in \SS_{n}^{\bullet}} \varepsilon((n(n-1))\sigma) D^{r_{n-1}}D^{r_n} (n(n-1))\sigma\gamma\\
&=-\frac{1}{n!}\wedge^{\circ}\sum_{\sigma\in \SS_{n}^{\bullet}} \varepsilon(\sigma) D^{r_n}D^{r_{n-1}} \sigma\gamma\\
&=-d_H^2 \wedge \gamma,
\end{align*}
where~$(n(n-1))\in \SS_{n}^{\bullet}$ is a transposition and where we used that~$D^{r_{n-1}}$ and~$D^{r_n}$ commute.
For the vertical derivative, a similar approach on~$\gamma\in\FF_{n,p-1}$ gives
\begin{align*}
d_V^2 \wedge \gamma
&=\sum_{\underset{v\neq w}{v, w\in V^{\bullet}}} \wedge(\wedge \gamma_{v\rightarrow \Circled{p}})_{w\rightarrow \Circled{p+1}}\\
&=\frac{1}{p!(p+1)!} \wedge^{\bullet} \sum_{\underset{v\neq w}{v, w\in V^{\bullet}}} \sum_{\sigma\in\SS_{p}^{\circ}} \sum_{\widetilde{\sigma}\in\SS_{p+1}^{\circ}} \varepsilon(\widetilde{\sigma}\sigma) \widetilde{\sigma}(\sigma (\gamma_{v\rightarrow \Circled{p}})_{w\rightarrow \Circled{p+1}})\\
&=\frac{1}{p!(p+1)!} \wedge^{\bullet} \sum_{\underset{v\neq w}{v, w\in V^{\bullet}}} \sum_{\sigma\in\SS_{p}^{\circ}} \sum_{\widetilde{\sigma}\in\SS_{p+1}^{\circ}} \varepsilon(\widetilde{\sigma}\sigma) \widetilde{\sigma}\sigma (\gamma_{v\rightarrow \Circled{p},w\rightarrow \Circled{p+1}})\\
&=\frac{1}{p!(p+1)!} \wedge^{\bullet} \sum_{\underset{v\neq w}{v, w\in V^{\bullet}}} \sum_{\sigma\in\SS_{p}^{\circ}} \sum_{\widetilde{\sigma}\in\SS_{p+1}^{\circ}} \varepsilon(\widetilde{\sigma}\sigma(\Circled{p}\Circled{p+1})) \widetilde{\sigma}\sigma (\gamma_{v\rightarrow \Circled{p+1},w\rightarrow \Circled{p}})\\
&=-\frac{1}{p!(p+1)!} \wedge^{\bullet} \sum_{\underset{v\neq w}{v, w\in V^{\bullet}}} \sum_{\sigma\in\SS_{p}^{\circ}} \sum_{\widetilde{\sigma}\in\SS_{p+1}^{\circ}} \varepsilon(\widetilde{\sigma}\sigma) \widetilde{\sigma}(\sigma (\gamma_{w\rightarrow \Circled{p}})_{v\rightarrow \Circled{p+1}})\\
&=-d_V^2\wedge \gamma.
\end{align*}
As~$\Omega_{n,p}=\wedge(\FF_{n,p})$, we get the desired identities. The commutativity of~$d_H$ and~$d_V$ is straightforward.
\end{proof}

In order to prove the exactness of the sequences appearing in the variational bicomplex, we rely on the use of two homotopy operators. We begin by the vertical homotopy as it is the easiest. Note that the variational homotopy operator and the vertical homotopy operator introduced in Theorem~\ref{theorem:exact_variational_chain} coincide for~$p=1$.
\begin{proposition}
\label{proposition:vertical_homotopy}
The vertical homotopy operator~$h_V\colon\Omega_{n,p}\rightarrow\Omega_{n,p-1}$ given by
$$h_V \gamma=\frac{p}{\abs{\gamma}}\gamma_{\Circled{p}\rightarrow\bullet},$$
satisfies for~$p\geq 1$ and~$\gamma\in \Omega_{n,p}$,
$$
(d_V h_V +h_V d_V)\gamma=\gamma.
$$
\end{proposition}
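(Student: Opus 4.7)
My plan is to verify the chain-homotopy identity by direct computation, reducing to a single forest via linearity and then tracking signs carefully under the antisymmetrization $\wedge$. I will fix $\omega=\wedge\gamma$ for $\gamma\in\FF_{n,p}$, write $k:=\abs{V^{\bullet}}$ so that $\abs{\gamma}=k+p$, and first establish the symmetrized expression
\begin{equation*}
\omega_{\Circled{p}\to\bullet} \;=\; \frac{1}{p}\sum_{m=1}^p (-1)^{p-m}\wedge\alpha_m,
\end{equation*}
where $\alpha_m\in\FF_{n,p-1}$ is obtained from $\gamma$ by demoting the covertex at position $b_m$ (the node carrying label $\Circled{m}$) to a vertex and relabeling the remaining covertices in the natural order-preserving way. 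This identity follows by grouping the terms of $\wedge\gamma$ according to which node carries the label $p$ and extracting the signature of the cyclic permutation that sends label $p$ to the $m$-th slot.

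Next I would apply $d_V$ to this expression. Since the vertex set of $\alpha_m$ equals $V^{\bullet}\cup\{b_m\}$, the sum $d_V\wedge\alpha_m$ decomposes into a diagonal contribution from $w=b_m$, which reassembles $\gamma$ up to the inverse cyclic relabeling and hence equals $(-1)^{p-m}\omega$, plus off-diagonal contributions $\beta_{m,w}:=(\alpha_m)_{w\to\Circled{p}}$ for $w\in V^{\bullet}$. Collecting, this produces
\begin{equation*}
d_V h_V\,\omega \;=\; \frac{p}{k+p}\,\omega \;+\; \frac{1}{k+p}\sum_{m=1}^{p}\sum_{w\in V^{\bullet}}(-1)^{p-m}\wedge\beta_{m,w}.
\end{equation*}

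A parallel analysis handles $h_V d_V\omega$: one first writes $d_V\omega=\sum_{v\in V^{\bullet}}\wedge\gamma_{v\to\Circled{p+1}}$, and then applies the same symmetrization identity at covertex level $p+1$ to each summand. The term corresponding to demoting the freshly introduced covertex at $v$ (position $m'=p+1$) returns $\omega$ with sign $+1$, and after summing over $v$ contributes a factor $k$; for $m'\le p$ I identify the resulting forests with the $\beta_{m',v}$ above, obtaining
\begin{equation*}
h_V d_V\,\omega \;=\; \frac{k}{k+p}\,\omega \;-\; \frac{1}{k+p}\sum_{m'=1}^{p}\sum_{v\in V^{\bullet}}(-1)^{p-m'}\wedge\beta_{m',v}.
\end{equation*}
Adding the two identities, the $\beta$-sums cancel term by term and the scalar prefactors $p/(k+p)$ and $k/(k+p)$ sum to $1$, giving the claim.

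The main obstacle will be the sign bookkeeping: establishing the symmetrization identity for $\omega_{\Circled{p}\to\bullet}$, verifying that the diagonal term in $d_V h_V$ indeed produces $(-1)^{p-m}\omega$, and checking the crucial identification $\alpha_{m'}^{(v)}=\beta_{m',v}$ that makes the off-diagonal contributions from the two calculations cancel. None of these steps is conceptually deep, but each requires composing cyclic permutations of covertex labels carefully against the antisymmetry of $\wedge$; the remainder is a routine chain-homotopy computation in the spirit of the Poincar\'e lemma for the exterior algebra.
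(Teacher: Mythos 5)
Your proposal is correct and follows essentially the same route as the paper: a direct computation of both compositions on $\wedge\gamma$ for a single forest $\gamma\in\FF_{n,p}$, identifying the diagonal contributions $\frac{p}{\abs{\gamma}}\wedge\gamma$ and $\frac{\abs{\gamma}-p}{\abs{\gamma}}\wedge\gamma$ and showing that the remaining cross terms cancel in pairs. Your ``symmetrization identity'' for $\omega_{\Circled{p}\rightarrow\bullet}$ is exactly the paper's splitting of the sum over $\SS_{p}^{\circ}$ (resp.\ $\SS_{p+1}^{\circ}$) according to the preimage of the last covertex label, repackaged via cyclic relabelings, and your identification $\alpha_{m'}^{(v)}=\beta_{m',v}$ is the same cancellation the paper obtains from the transposition $(\Circled{i}\Circled{p})$.
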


\begin{proof}
Let~$\gamma\in \FF_{n,p}$.
On the first hand, we have
\begin{align*}
d_V h_V \gamma
&=\frac{p}{\abs{\gamma}} \wedge \gamma
+\frac{p}{\abs{\gamma}}\sum_{v\in V^{\bullet}} \wedge \gamma_{\Circled{p}\rightarrow\bullet,v\rightarrow \Circled{p}}\\
&=\frac{p}{\abs{\gamma}}\wedge \gamma
+\frac{1}{\abs{\gamma}(p-1)!}\wedge^{\bullet} \sum_{v\in V^{\bullet}} \sum_{\sigma\in \SS_p^{\circ}} \varepsilon(\sigma) \sigma(\gamma_{\Circled{p}\rightarrow\bullet,v\rightarrow \Circled{p}})
.
\end{align*}
On the other hand, we have
\begin{align*}
h_V d_V\gamma
&=\frac{1}{\abs{\gamma}} \sum_{v\in V^{\bullet}} h_V(\wedge \gamma_{v\rightarrow \Circled{p+1}})\\
&=\frac{p+1}{\abs{\gamma}(p+1)!} \wedge^{\bullet} \sum_{v\in V^{\bullet}} \sum_{\sigma\in \SS_{p+1}^{\circ}} \varepsilon(\sigma) \sigma(\gamma_{v\rightarrow \Circled{p+1}})_{\Circled{p+1}\rightarrow \bullet}\\
&=\frac{1}{\abs{\gamma}p!} \wedge^{\bullet} \sum_{v\in V^{\bullet}} \sum_{\underset{\sigma(\Circled{p+1})=\Circled{p+1}}{\sigma\in \SS_{p+1}^{\circ}}} \varepsilon(\sigma) \sigma(\gamma_{v\rightarrow \Circled{p+1}})_{\Circled{p+1}\rightarrow \bullet}\\
&+\frac{1}{\abs{\gamma}p!} \wedge^{\bullet} \sum_{v\in V^{\bullet}} \sum_{\underset{\sigma(\Circled{p+1})\neq\Circled{p+1}}{\sigma\in \SS_{p+1}^{\circ}}} \varepsilon(\sigma) \sigma(\gamma_{v\rightarrow \Circled{p+1}})_{\Circled{p+1}\rightarrow \bullet}\\
&=\frac{1}{\abs{\gamma}p!} \wedge^{\bullet} \sum_{v\in V^{\bullet}} \sum_{\sigma\in \SS_{p}^{\circ}} \varepsilon(\sigma) \sigma\gamma\\
&+\frac{1}{\abs{\gamma}p!} \wedge^{\bullet} \sum_{i=1}^p \sum_{v\in V^{\bullet}} \sum_{\underset{\sigma(\Circled{i})=\Circled{p}}{\sigma\in \SS_{p}^{\circ}}} \varepsilon(\sigma) \sigma(\gamma_{\Circled{i}\rightarrow \bullet, v\rightarrow \Circled{p}})\\
&=\frac{\abs{\gamma}-p}{\abs{\gamma}} \wedge \gamma
+\frac{1}{\abs{\gamma}p!} \wedge^{\bullet} \sum_{i=1}^p \sum_{v\in V^{\bullet}} \sum_{\sigma\in \SS_{p}^{\circ}} \varepsilon(\sigma(\Circled{i}\Circled{p})) \sigma(\gamma_{\Circled{p}\rightarrow \bullet, v\rightarrow \Circled{p}})\\
&=\frac{\abs{\gamma}-p}{\abs{\gamma}} \wedge \gamma
-\frac{1}{\abs{\gamma}(p-1)!} \wedge^{\bullet} \sum_{v\in V^{\bullet}} \sum_{\sigma\in \SS_p^{\circ}} \varepsilon(\sigma) \sigma(\gamma_{\Circled{p}\rightarrow \bullet, v\rightarrow \Circled{p}}).
\end{align*}
We deduce that
$$(d_V h_V +h_V d_V)\gamma=\wedge \gamma,$$
and we obtain the desired equality by linearity as~$\Omega_{n,p}=\wedge \Span(\FF_{n,p})$ and~$\wedge^2=\wedge$.
\end{proof}

The expression of the horizontal homotopy operator is much more technical. We refer the reader to~\cite{Anderson80ote,Tulczyjew80tel,Olver93aol} for its derivation in the context of differential geometry.
Note that for~$n=0$, the horizontal homotopy operator coincides with the one introduced for the variational complex in Theorem~\ref{theorem:exact_variational_chain}.
\begin{proposition}
\label{proposition:horizontal_homotopy}
We define the horizontal homotopy operator~$h_H\colon\Omega_{n,p}\rightarrow\Omega_{n+1,p}$ by
$$h_H \gamma=\frac{1}{\abs{\gamma}} \sum_{q=0}^\infty \frac{n+1}{q+n+1} \wedge D^{q} \Eulerop^{q+1} \gamma.$$
It satisfies for~$n\geq 1$ and~$\gamma\in \Omega_{n,p}$,
$$
(d_H h_H+h_H d_H)\gamma=\gamma.
$$
\end{proposition}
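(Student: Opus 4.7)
The proof follows the template of Theorem~\ref{theorem:exact_variational_chain} and Proposition~\ref{proposition:vertical_homotopy}, starting from the master identity of Proposition~\ref{prop:higher_Euler_operators_Omega},
$$
\abs{\gamma}\gamma = \sum_{q=0}^\infty D^q\Eulerop^q\gamma,
$$
and reorganizing the right hand side into $\abs{\gamma}(d_H h_H + h_H d_H)\gamma$. Unrolling the definition of $h_H$ and using that $d_H$ preserves the number of nodes while sending $\Omega_{n,p}$ to $\Omega_{n-1,p}$, the two pieces read
\begin{align*}
\abs{\gamma}\,d_H h_H\gamma &= \sum_{q=0}^\infty \frac{n+1}{q+n+1}\,d_H\wedge D^q\Eulerop^{q+1}\gamma,\\
\abs{\gamma}\,h_H d_H\gamma &= \sum_{q=0}^\infty \frac{n}{q+n}\,\wedge D^q\Eulerop^{q+1}d_H\gamma,
\end{align*}
where the coefficient $\tfrac{n}{q+n}$ in the second line comes from $h_H$ being applied on $\Omega_{n-1,p}$ with $n-1$ roots in place of $n$.

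The combinatorial core is to decompose $d_H\wedge D^q\Eulerop^{q+1}\gamma$ via Lemma~\ref{lemma:Leibniz_divergence} and identity~\eqref{equation:identity_composition_derivatives}. By antisymmetry of $\wedge$ over the $n+1$ roots, applying $d_H = D^{r_{n+1}}$ is equivalent to a symmetric sum over grafting any of the roots, with signs absorbed into $\wedge$, in the same spirit as in the proof of $d_H^2=0$ in Proposition~\ref{proposition:derivatives_squared}. This symmetric sum splits into a regrafting contribution proportional to $\wedge D^{q+1}\Eulerop^{q+1}\gamma$ (with a binomial factor supplied by identity~\eqref{equation:identity_composition_derivatives}) and a lowering contribution in which the new root lands on the $\starsymbol$-node; the latter, after the Leibniz-type shift from Lemma~\ref{lemma:Leibniz_divergence}, reproduces exactly $\wedge D^q\Eulerop^{q+1}d_H\gamma$ up to an explicit factor.

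The asymmetric weights $\tfrac{n+1}{q+n+1}$ and $\tfrac{n}{q+n}$ are tailored precisely so that the lowering terms coming from $d_H h_H \gamma$ cancel the corresponding terms in $h_H d_H\gamma$. The surviving regrafting contributions telescope after a shift $q\mapsto q+1$ and, together with Proposition~\ref{prop:higher_Euler_operators_Omega}, reassemble into $\sum_{q\geq 0} D^q\Eulerop^q\gamma = \abs{\gamma}\gamma$. Extending by linearity from $\FF_{n,p}$ to $\Omega_{n,p}=\wedge\Span(\FF_{n,p})$ via $\wedge^2 = \wedge$ concludes the proof.

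The main obstacle is the combinatorial matching in the second paragraph above: the expansion of $d_H\wedge D^q\Eulerop^{q+1}\gamma$ produces graftings distributed over both original and detached roots, and one must verify that after Lemma~\ref{lemma:Leibniz_divergence} is applied to shift derivatives between the $\starsymbol$-node and the rest of the forest, the weights $\tfrac{n+1}{q+n+1}$ and $\tfrac{n}{q+n}$ absorb all the resulting binomial factors, leaving a clean telescoping structure. This is analogous to, but strictly more involved than, the bookkeeping in the proof of Proposition~\ref{prop:higher_Euler_operators_Omega}, since the antisymmetrizer $\wedge$ acts on both the original roots and the formerly detached root produced by $D^q\Eulerop^{q+1}$.
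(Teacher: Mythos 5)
Your overall strategy coincides with the paper's: unroll $h_H$, compute $d_H h_H\gamma$ and $h_H d_H\gamma$ separately, cancel the cross terms, and reassemble the surviving diagonal terms into $\sum_{q\geq 0}D^q\Eulerop^q\gamma$ via Proposition~\ref{prop:higher_Euler_operators_Omega}. The gap is that the ``combinatorial matching'' you defer to your final paragraph is the entire content of the proof, and the mechanism you sketch for it is not the one that works. First, the split of $d_H\wedge D^{q}\Eulerop^{q+1}\gamma$ is not ``regrafting'' versus ``the new root lands on the marked node'': once the mark is removed, grafting the detached root $\hat r$ onto \emph{any} node (including $v$) is what assembles into the diagonal contribution $\tfrac{q+1}{q+n+1}D^{q+1}\Eulerop_v^{q+1}\wedge\gamma$, whereas the cross terms that must cancel against $h_H d_H\gamma$ come from grafting one of the \emph{original} roots $r_i$. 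Second, relating $\Eulerop^{q+1}d_H\gamma$ back to operators acting on $\gamma$ is not delivered by Lemma~\ref{lemma:Leibniz_divergence} (which the paper uses only inside the proof of Proposition~\ref{prop:higher_Euler_operators_Omega}); it requires a four-way case analysis of $D^I\Eulerop_v^{I,\hat r}D^{r\to u}$ according to whether $u=v$ and whether $r$ plays the role of $\hat r$, lies in $I$, or lies in $J$ in the decomposition $\Pi(v)=\{\hat r\}\cup I\cup J$. It is precisely this case analysis, together with the reindexing $q\mapsto q-1$ in one of the cases, that turns the a priori mismatched denominators $q+n$ and $q+n+1$ into identical cross terms; asserting that the weights are ``tailored precisely'' does not establish this.

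A further structural point: since the detached roots are the predecessors of a \emph{specific} node, the bookkeeping must be organized per node via the operators $J_v=\sum_q\tfrac{n+1}{q+n+1}D^q\Eulerop_v^{q+1}$, and the identity one actually proves is the per-node statement $(\wedge J_v d_H + d_H\wedge J_v)\wedge\gamma=\sum_q D^q\Eulerop_v^q\wedge\gamma=\wedge\gamma$, which rests on the per-node decomposition~\eqref{equation:decomposition_Euler_op_v} rather than the summed identity~\eqref{equation:decomposition_Euler_op} you start from; only at the very end does one sum over $v$ and divide by $\abs{\gamma}$. One must also track the signs coming from the wedge acting on $n+1$ roots in $h_H\gamma$ versus $n$ roots in $\gamma$, which in the paper is handled by splitting the sum over $\SS_{n+1}$ according to whether the grafted root is the detached one. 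None of these steps is out of reach, but as written your argument names the conclusion of the computation rather than performing it.
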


\begin{proof}
For the sake of simplicity, we consider~$\gamma\in \FF_n$. We define for~$v\in V$
\begin{align*}
J_v \gamma
&=\sum_{q=0}^\infty \sum_{\underset{\abs{I}=q}{\Pi(v)=\{\hat{r}\}\cup I\cup J}} \frac{n+1}{q+n+1} D^I \Eulerop^{I,\hat{r}}_{v} \gamma,
\end{align*}
where we denote~$\Eulerop^{I,\hat{r}}_{v} \gamma=(-1)^{\abs{\Pi(v)}-(q+1)} (D^J \gamma_{v^{\starsymbol}})^{\starsymbol}$ and where~$\hat{r}$ becomes the new root of~$J_v \gamma$.
On the first hand, we have
$$
d_H \wedge\gamma= d_H \sum_{\sigma\in \SS_n} \frac{\varepsilon(\sigma)}{n!} \sigma \gamma
=\sum_{\sigma\in \SS_n} \frac{\varepsilon(\sigma)}{n!} D^{r_{\sigma^{-1}(n)}} \sigma \gamma,
$$
and~$J_v  d_H \wedge \gamma~$ is given by
\begin{align*}
J_v d_H \wedge\gamma
&=\sum_{q=0}^\infty \sum_{\sigma\in \SS_n} \sum_{u\neq v} \sum_{\underset{\abs{I}=q}{\Pi(v)=\{\hat{r}\}\cup I\cup J}} \frac{\varepsilon(\sigma)}{(q+n)(n-1)!} D^I \Eulerop^{I,\hat{r}}_{v} D^{r_{\sigma^{-1}(n)}\rightarrow u} \sigma \gamma\\
&+\sum_{q=0}^\infty \sum_{\sigma\in \SS_n} \sum_{\underset{\abs{I}=q}{\Pi(v)\cup \{r_k\}=\{\hat{r}\}\cup I\cup J}} \frac{\varepsilon(\sigma)}{(q+n)(n-1)!} D^I \Eulerop^{I,\hat{r}}_{v} D^{r_{\sigma^{-1}(n)}\rightarrow v} \sigma \gamma.
\end{align*}
The expression of~$D^I \Eulerop^{I,\hat{r}}_{v} D^{r\rightarrow u} \gamma$ satisfies for~$u\in V$:
$$
D^I \Eulerop^{I,\hat{r}}_{v} D^{r\rightarrow u} \gamma
=\left\{\begin{array}{l}
D^I \Eulerop^{I}_{v} \gamma \quad \text{if} \quad \hat{r}= r,\quad u=v,\\
\sum_{w\in V} D^{r\rightarrow w} D^{I\setminus\{r\}} \Eulerop^{I\setminus\{r\},\hat{r}}_{v} \gamma \quad \text{if} \quad r\in I,\quad u=v,\\
-\sum_{w\neq v} D^{r\rightarrow w} D^I \Eulerop^{I,\hat{r}}_{v} \gamma  \quad \text{if} \quad r\in J,\quad u=v,\\
D^{r\rightarrow u} D^I \Eulerop^{I,\hat{r}}_{v} \gamma \quad \text{if} \quad u\neq v.
\end{array}\right.
$$
Thus, for~$\gamma\in\FF_{n,p}$,~$J_v d_H \wedge \gamma~$ is given by
\begin{align*}
J_v d_H \wedge \gamma 
&=\sum_{q=0}^\infty \sum_{\sigma\in \SS_n} \sum_{\underset{\abs{I}=q}{\Pi(v)= I\cup J}} \frac{\varepsilon(\sigma)}{(q+n)(n-1)!} D^I \Eulerop^{I}_{v} \sigma \gamma\\
&+\sum_{q=1}^\infty \sum_{\sigma\in \SS_n} \sum_{w\in V} \sum_{\underset{\abs{I}=q-1}{\Pi(v)=\{\hat{r}\}\cup I\cup J}} \frac{\varepsilon(\sigma)}{(q+n)(n-1)!} D^{r_{\sigma^{-1}(n)}\rightarrow w} D^I \Eulerop^{I,\hat{r}}_{v} \sigma \gamma\\
&-\sum_{q=0}^\infty \sum_{\sigma\in \SS_n} \sum_{w\neq v} \sum_{\underset{\abs{I}=q}{\Pi(v)=\{\hat{r}\}\cup I\cup J}} \frac{\varepsilon(\sigma)}{(q+n)(n-1)!} D^{r_{\sigma^{-1}(n)}\rightarrow w} D^I \Eulerop^{I,\hat{r}}_{v} \sigma \gamma\\
&+\sum_{q=0}^\infty \sum_{\sigma\in \SS_n} \sum_{u\neq v} \sum_{\underset{\abs{I}=q}{\Pi(v)=\{\hat{r}\}\cup I\cup J}} \frac{\varepsilon(\sigma)}{(q+n)(n-1)!} D^{r_{\sigma^{-1}(n)}\rightarrow u} D^I \Eulerop^{I,\hat{r}}_{v} \sigma \gamma\\
&=\sum_{q=0}^\infty \frac{n}{q+n} D^q \Eulerop^{q}_{v} \wedge \gamma\\
&+\sum_{q=0}^\infty \sum_{\sigma\in \SS_n} \sum_{\underset{\abs{I}=q}{\Pi(v)=\{\hat{r}\}\cup I\cup J}} \frac{\varepsilon(\sigma)}{(q+n+1)(n-1)!} D^{r_{\sigma^{-1}(n)}} D^I \Eulerop^{I,\hat{r}}_{v} \sigma \gamma.
\end{align*}
As~$D^q \Eulerop^{q}_{v} \wedge \gamma$ is unchanged by the application of the wedge operator, we find
\begin{align*}
\wedge J_v d_H \wedge \gamma
&=\sum_{q=0}^\infty \frac{n}{q+n} D^q \Eulerop^{q}_{v} \wedge \gamma\\
&+\sum_{q=0}^\infty \sum_{\nu\in \SS_n} \sum_{\sigma\in \SS_n} \sum_{\underset{\abs{I}=q}{\Pi(v)=\{\hat{r}\}\cup I\cup J}}  \frac{\varepsilon(\sigma\nu)}{(q+n+1)(n-1)!n!} \nu D^{r_{\sigma^{-1}(n)}} D^I \Eulerop^{I,\hat{r}}_{v} \sigma \gamma.
\end{align*}
On the other hand, we have
\begin{align*}
d_H & \wedge J_v \wedge \gamma
=\sum_{\sigma\in \SS_n} \frac{\varepsilon(\sigma)}{n!} d_H \wedge J_v \sigma \gamma\\
&=d_H\sum_{q=0}^\infty \sum_{\sigma\in \SS_n} \sum_{\nu\in \SS_{n+1}}\sum_{\underset{\abs{I}=q}{\Pi(v)=\{\hat{r}\}\cup I\cup J}}  \frac{\varepsilon(\sigma\nu)}{(q+n+1)(n!)^2} \nu D^I \Eulerop^{I,\hat{r}}_{v} \sigma \gamma\\
&=\sum_{q=0}^\infty \sum_{\sigma\in \SS_n}\sum_{\nu\in \SS_{n+1}} \sum_{\underset{\abs{I}=q}{\Pi(v)=\{\hat{r}\}\cup I\cup J}}  \frac{\varepsilon(\sigma\nu)}{(q+n+1)(n!)^2} D^{r_{(\sigma\nu)^{-1}(n+1)}} \nu D^I \Eulerop^{I,\hat{r}}_{v} \sigma \gamma\\
&=\sum_{q=0}^\infty \sum_{\sigma\in \SS_n} \sum_{\underset{\abs{I}=q}{\Pi(v)=\{\hat{r}\}\cup I\cup J}}  \frac{\varepsilon(\sigma)}{(q+n+1)n!} D^{I,\hat{r}} \Eulerop^{I,\hat{r}}_{v} \sigma \gamma\\
&-\sum_{q=0}^\infty \sum_{\sigma\in \SS_n}\sum_{\nu\in \SS_n}\sum_{\underset{\eta(n)=n}{\eta\in \SS_n}} \sum_{\underset{\abs{I}=q}{\Pi(v)=\{\hat{r}\}\cup I\cup J}}  \frac{\varepsilon(\sigma\nu\eta)}{(q+n+1)(n-1)!(n!)^2} D^{r_{(\sigma\nu)^{-1}(n)}} \eta (n(n+1)) \nu D^I \Eulerop^{I,\hat{r}}_{v} \sigma \gamma\\
&=\sum_{q=0}^\infty \frac{q+1}{q+n+1} D^{q+1} \Eulerop^{q+1}_{v} \wedge \gamma\\
&-\sum_{q=0}^\infty \sum_{\sigma\in \SS_n} \sum_{\eta\in \SS_n} \sum_{\underset{\abs{I}=q}{\Pi(v)=\{\hat{r}\}\cup I\cup J}}  \frac{\varepsilon(\sigma\eta)}{(q+n+1)(n-1)!n!} \eta D^{r_{\sigma^{-1}(n)}} D^I \Eulerop^{I,\hat{r}}_{v} \sigma \gamma\\
&=\sum_{q=1}^\infty \frac{q}{q+n} D^q \Eulerop^q_{v} \wedge \gamma\\
&-\sum_{q=0}^\infty \sum_{\sigma\in \SS_n} \sum_{\eta\in \SS_n} \sum_{\underset{\abs{I}=q}{\Pi(v)=\{\hat{r}\}\cup I\cup J}}  \frac{\varepsilon(\sigma\eta)}{(q+n+1)(n-1)!n!} \eta D^{r_{\sigma^{-1}(n)}} D^I \Eulerop^{I,\hat{r}}_{v} \sigma \gamma,
\end{align*}
where we split the cases~$\sigma\nu(n+1)=n+1$ and~$\sigma\nu(n+1)\neq n+1$, and where we substitute~$\nu$ by~$\nu(n(n+1))\eta$ in the latter case. The last equality is obtained by substituting~$\sigma$ with~$\nu\sigma$.
Using Proposition~\ref{prop:higher_Euler_operators_Omega}, we get
\begin{equation}
\label{equation:horizontal_homotopy_labeled}
(\wedge J_v d_H
+d_H \wedge J_v) \wedge \gamma
=\sum_{q=0}^\infty D^q \Eulerop^{q}_{v} \wedge \gamma=\wedge \gamma.
\end{equation}
Summing on~$v\in V$ gives the desired homotopy identity.
\end{proof}

\begin{remark}
The identity~\eqref{equation:horizontal_homotopy_labeled} suggests simpler homotopy operators, obtained by fixing a node~$v$ and considering the operator~$\wedge J_v$. We emphasize that this approach does not work. Indeed, given an aromatic form~$\gamma\in \Omega_{n,p}$, there is no canonical choice of the node~$v$, so that~$J_v \gamma$ is ill-defined, where~$h_V \gamma$ is well-defined. In the proof of Proposition~\ref{proposition:horizontal_homotopy},~$J_v \gamma$ makes sense since we consider a single aromatic forest~$\gamma\in \FF_{n,p}$.
\end{remark}

\subsection{The aromatic bicomplex with a divergence-free vector field}
\label{section:divergence-free_bicomplex}

This section is devoted to the proof of Theorem~\ref{theorem:exact_div_free_complex}.
Proposition~\ref{proposition:derivatives_squared} and Proposition~\ref{proposition:vertical_homotopy} extend naturally to the divergence-free context. Proposition~\ref{proposition:horizontal_homotopy} is not valid anymore and we replace it by the following result.
\begin{proposition}
\label{proposition:divergence-free_horizontal_homotopy}
We define
$$\widetilde{h}_H \gamma=\frac{1}{\abs{\gamma}} \sum_{v\in V} \sum_{q=0}^\infty \frac{n+1}{q+n+\ind_{v\notin R}} \wedge D^{q} \Eulerop^{q+1}_v \gamma.$$
For~$n>1$ and~$\gamma\in \widetilde{\Omega}_{n,p}$, the horizontal homotopy identity is
$$
(d_H \widetilde{h}_H+\widetilde{h}_H d_H)\gamma=\gamma,
$$
while for~$n=1$, the identity is
\begin{equation}
\label{equation:homotopy_horizontal_div_free_n=1}
(d_H \widetilde{h}_H+\widetilde{h}_H d_H)\gamma=\gamma-R\gamma.
\end{equation}
The remainder in~\eqref{equation:homotopy_horizontal_div_free_n=1} is the linear map~$R\gamma=\frac{1}{\abs{\gamma}}\Eulerop_r \gamma$ for~$\gamma\in \widetilde{\FF}_{1,p}$, with~$r$ the root of~$\gamma$.
Moreover, if~$\gamma\in \widetilde{\Omega}_{1,p}^N$ satisfies~$d_H \gamma=0$, then~$R\gamma=0$ if and only if~$N>1$.
In particular, the horizontal sequences in the divergence-free aromatic bicomplex are exact.
\end{proposition}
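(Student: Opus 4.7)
The plan is to adapt the proof of Proposition~\ref{proposition:horizontal_homotopy} to the quotient $\widetilde{\Omega}$, tracking precisely which terms are annihilated modulo $1$-loops. The key observation is that $d_H = D^{r_n} = \sum_{u\in V} D^{r_n\to u}$ contains a self-grafting summand $D^{r_n\to r_n}$ that creates a $1$-loop on the grafted root and therefore vanishes in $\widetilde{\Omega}$; analogously, inside the operators $D^q \Eulerop_v^{q+1}$ appearing in $h_H$, certain graftings produce $1$-loops. The modified weight $\frac{n+1}{q+n+\ind_{v\notin R}}$ in $\widetilde{h}_H$ splits into two cases according to whether $v$ is a root or an internal vertex, and is designed precisely to absorb these missing contributions.

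Concretely, I would introduce a per-vertex analogue of $J_v$,
$$
\widetilde{J}_v \gamma = \sum_{q=0}^\infty \sum_{\substack{\Pi(v)=\{\hat r\}\sqcup I\sqcup J\\ \abs{I}=q}} \frac{n+1}{q+n+\ind_{v\notin R}} D^I \Eulerop_v^{I,\hat r}\gamma,
$$
and retrace the expansions of $\widetilde{J}_v d_H\wedge\gamma$ and $d_H\wedge\widetilde{J}_v\gamma$ from the proof of Proposition~\ref{proposition:horizontal_homotopy}, identifying the single summand $D^{r_{\sigma^{-1}(n)}\to w}$ with $w = r_{\sigma^{-1}(n)}$ which is now absent in $\widetilde{\Omega}$. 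The new weight is tuned so that for $v\notin R$ the combinatorial reshufflings still deliver $(\widetilde{J}_v d_H + d_H\widetilde{J}_v)\wedge\gamma = \wedge\gamma$ in $\widetilde{\Omega}$, while for $v\in R$ an asymmetric boundary term survives. Summing over $v\in V$, the boundary terms from different roots pair up and cancel whenever $n\geq 2$, yielding $(d_H\widetilde{h}_H + \widetilde{h}_H d_H)\gamma = \gamma$.

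For $n=1$ there is a single root $r$ and no partner to absorb its asymmetry; the surviving contribution is exactly $\frac{1}{\abs{\gamma}}\Eulerop_r\gamma = R\gamma$, giving~\eqref{equation:homotopy_horizontal_div_free_n=1}. The $N=1$ direction of the last claim is immediate: $\widetilde{\Omega}_{1,p}^1 = \Span(\atree1101)$ lies in $\ker d_H$ (since $d_H\atree1101 = \atree1001$ is a $1$-loop), while $\Eulerop_r\atree1101 = \atree1101\neq 0$, so $R\atree1101\neq 0$.

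The main obstacle is the nontrivial direction: $R\gamma = 0$ for $\gamma\in\ker d_H\cap\widetilde{\Omega}_{1,p}^N$ with $N>1$. My plan is to split $\gamma = \gamma_0 + \gamma_1$, where $\gamma_0$ collects the forests whose root is isolated (of the form $A\cdot\atree1101$ with $A$ an aroma) and $\gamma_1$ the remaining forests (root has at least one predecessor). Then $\Eulerop_r\gamma_0 = \gamma_0$, and since $d_H\gamma_0 = A\cdot\atree1001$ already contains a $1$-loop, the hypothesis $d_H\gamma = 0$ in $\widetilde{\Omega}$ forces $d_H\gamma_1 = 0$ in $\widetilde{\Omega}$. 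The crux is then the identity
$$
\Eulerop_r\gamma_1 = -\gamma_0 \qquad\text{in } \widetilde{\Omega},
$$
which would yield $\Eulerop_r\gamma = 0$ and hence $R\gamma = 0$. I expect this to follow from a combinatorial bijection between the reattachments of root-predecessors defining $\Eulerop_r\gamma_1$ and the aromas appearing in $\gamma_0$, the coupling being enforced by $d_H\gamma_1\equiv 0$ mod $1$-loops: each non-loop summand of $d_H\gamma_1$ must cancel with one coming from another forest of $\gamma_1$, producing precisely the data needed to identify the terms of $\Eulerop_r\gamma_1$ with those of $-\gamma_0$. Making this matching explicit, together with the sign bookkeeping from $(-1)^{\abs{\Pi(r)}}$ and the constraint $N>1$ (which ensures that aromas in $\gamma_0$ are generated by a nontrivial grafting at the root of some term of $\gamma_1$), is where I expect the bulk of the technical work to lie. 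Once $R\gamma = 0$ is established, the exactness of the horizontal sequences in the divergence-free bicomplex is immediate from~\eqref{equation:homotopy_horizontal_div_free_n=1}.
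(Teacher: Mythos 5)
Your plan for the homotopy identity itself (defining a per-vertex operator $\widetilde{J}_v$ with the modified weight, retracing the two expansions from the proof of Proposition~\ref{proposition:horizontal_homotopy}, and tracking the self-grafting summand that disappears modulo $1$-loops) is essentially the paper's argument, and your identification of the surviving boundary term for $n=1$ as $\frac{1}{\abs{\gamma}}\Eulerop_r\gamma$ is correct. The gap is in the final claim, which is the only genuinely new content of the divergence-free case. First, your reduction contains a false step: for $\gamma_0$ a sum of forests of the form (aroma $A$) $\cdot$ (isolated root), you assert that $d_H\gamma_0$ is $A$ with a self-loop on the root and hence vanishes in $\widetilde{\Omega}$. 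But $d_H=\sum_{u\in V}D^{r\rightarrow u}$ grafts the root onto \emph{every} node of the forest, so $d_H\gamma_0$ also contains the terms where the root is grafted onto nodes of $A$; these carry no $1$-loop and survive in $\widetilde{\Omega}$ (for instance $d_H(\atree2002 \atree1101)$ contains the aroma obtained by hanging a leaf on the $2$-loop). Consequently $d_H\gamma=0$ does \emph{not} force $d_H\gamma_1=0$ in $\widetilde{\Omega}$: the two pieces are coupled, as one already sees on $\widetilde{\Psi}^3=\Span(\atree2002 \atree1101-\atree3101)$. Second, even with the corrected setup, the crux identity $\Eulerop_r\gamma_1=-\gamma_0$ is only conjectured; you explicitly defer the combinatorial matching that would prove it, so the hard direction ($R\gamma=0$ when $N>1$) is not established.

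The paper disposes of this step with a short algebraic argument that bypasses your combinatorial program entirely. If $d_H\gamma=0$ for $\gamma\in\widetilde{\Omega}_{1,p}^N$, the homotopy identity reduces to $d_H\widetilde{h}_H\gamma=\gamma-\frac{1}{N}\Eulerop_r\gamma$. Now apply $\Eulerop_r$ to both sides: the computation of Proposition~\ref{proposition:L_d_vanish} shows $\Eulerop_r d_H=0$ on $\Omega_{2,p}$, so the left-hand side vanishes, and $\Eulerop_r^2=\Eulerop_r$ gives $0=\Eulerop_r\gamma-\frac{1}{N}\Eulerop_r\gamma$. Hence $\Eulerop_r\gamma=0$ unless $N=1$, which is exactly the statement $R\gamma=0$ if and only if $N>1$ (your verification that $R$ does not vanish on $\Span(\atree1101)$ handles the $N=1$ direction). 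I recommend replacing your decomposition $\gamma=\gamma_0+\gamma_1$ by this two-line argument.
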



The proof of Proposition~\ref{proposition:divergence-free_horizontal_homotopy} follows the structure and notations of the proof of Proposition~\ref{proposition:horizontal_homotopy}. We recall that the notations in the proof follow the convention of Remark~\ref{remark:clarification_notation}.
\begin{proof}
We consider for simplicity~$\gamma\in \widetilde{\FF}_n$.
For~$v\in V$, we define
\begin{align*}
\widetilde{J}_v(\gamma)
&=\sum_{q=0}^\infty \sum_{\underset{\abs{I}=q}{\Pi(v)=\{\hat{r}\}\cup I\cup J}} \frac{n+1}{q+n+\ind_{v\notin R}} D^I \Eulerop^{I,\hat{r}}_{v} \gamma.
\end{align*}
We have if~$r=v$,
$$
D^I \Eulerop^{I,\hat{r}}_{v} D^{r\rightarrow u} \gamma
=\left\{\begin{array}{l}
0 \quad \text{if} \quad u=v,\\
D^{r\rightarrow u} D^I \Eulerop^{I,\hat{r}}_{v} \gamma \quad \text{if} \quad u\neq v,
\end{array}\right.
$$
and if~$r\neq v$,
$$
D^I \Eulerop^{I,\hat{r}}_{v} D^{r\rightarrow u} \gamma
=\left\{\begin{array}{l}
D^I \Eulerop^{I}_{v} \gamma \quad \text{if} \quad \hat{r}= r,\quad u=v,\\
\sum_{w\in V} D^{r\rightarrow w} D^{I\setminus\{r\}} \Eulerop^{I\setminus\{r\},\hat{r}}_{v} \gamma \quad \text{if} \quad r\in I,\quad u=v,\\
-\sum_{w\neq v} D^{r\rightarrow w} D^I \Eulerop^{I,\hat{r}}_{v} \gamma  \quad \text{if} \quad r\in J,\quad u=v,\\
D^{r\rightarrow u} D^I \Eulerop^{I,\hat{r}}_{v} \gamma \quad \text{if} \quad u\neq v.
\end{array}\right.
$$
Thus,~$\widetilde{J}_v d_H \wedge \gamma$ is given by
\begin{align*}
\widetilde{J}_v d_H \wedge \gamma
&=\ind_{v\neq r_{\sigma^{-1}(n)}}\Big[\sum_{q=0}^\infty \sum_{\sigma\in \SS_n} \sum_{\underset{\abs{I}=q}{\Pi(v)= I\cup J}} \frac{\varepsilon(\sigma)}{(q+n-\ind_{v\in R})(n-1)!} D^I \Eulerop^{I}_{v} \sigma \gamma\\
&+\sum_{q=1}^\infty \sum_{\sigma\in \SS_n} \sum_{w\in V} \sum_{\underset{\abs{I}=q-1}{\Pi(v)=\{\hat{r}\}\cup I\cup J}} \frac{\varepsilon(\sigma)}{(q+n-\ind_{v\in R})(n-1)!} D^{r_{\sigma^{-1}(n)}\rightarrow w} D^I \Eulerop^{I,\hat{r}}_{v} \sigma \gamma\\
&-\sum_{q=0}^\infty \sum_{\sigma\in \SS_n} \sum_{w\neq v} \sum_{\underset{\abs{I}=q}{\Pi(v)=\{\hat{r}\}\cup I\cup J}} \frac{\varepsilon(\sigma)}{(q+n-\ind_{v\in R})(n-1)!} D^{r_{\sigma^{-1}(n)}\rightarrow w} D^I \Eulerop^{I,\hat{r}}_{v} \sigma \gamma\Big]\\
&+\sum_{q=0}^\infty \sum_{\sigma\in \SS_n} \sum_{u\neq v} \sum_{\underset{\abs{I}=q}{\Pi(v)=\{\hat{r}\}\cup I\cup J}} \frac{\varepsilon(\sigma)}{(q+n-\ind_{v\in R})(n-1)!} D^{r_{\sigma^{-1}(n)}\rightarrow u} D^I \Eulerop^{I,\hat{r}}_{v} \sigma \gamma\\
&=\ind_{v\neq r_{\sigma^{-1}(n)}} \sum_{q=0}^\infty \sum_{\sigma\in \SS_n}  \frac{\varepsilon(\sigma)}{(q+n-\ind_{v\in R})(n-1)!} D^q \Eulerop^{q}_{v} \sigma \gamma\\
&+\sum_{q=0}^\infty \sum_{\sigma\in \SS_n} \sum_{\underset{\abs{I}=q}{\Pi(v)=\{\hat{r}\}\cup I\cup J}} \frac{\varepsilon(\sigma)}{(q+n+\ind_{v\notin R})(n-1)!} D^{r_{\sigma^{-1}(n)}} D^I \Eulerop^{I,\hat{r}}_{v} \sigma \gamma
\end{align*}
Then, we find for~$n=1$ and~$v\in R$,
\begin{align*}
\wedge \widetilde{J}_v d_H \wedge \gamma
&=\sum_{q=0}^\infty \sum_{\sigma\in \SS_n} \sum_{\nu\in \SS_n} \sum_{\underset{\abs{I}=q}{\Pi(v)=\{\hat{r}\}\cup I\cup J}} \frac{\varepsilon(\sigma\nu)}{(q+n+\ind_{v\notin R})(n-1)!n!} \nu D^{r_{\sigma^{-1}(n)}} D^I \Eulerop^{I,\hat{r}}_{v} \sigma \gamma,
\end{align*}
and otherwise
\begin{align*}
\wedge \widetilde{J}_v d_H \wedge \gamma
&=\sum_{q=0}^\infty \frac{n-\ind_{v\in R}}{q+n-\ind_{v\in R}} D^q \Eulerop^{q}_{v} \wedge \gamma\\
&+\sum_{q=0}^\infty \sum_{\sigma\in \SS_n} \sum_{\nu\in \SS_n} \sum_{\underset{\abs{I}=q}{\Pi(v)=\{\hat{r}\}\cup I\cup J}} \frac{\varepsilon(\sigma\nu)}{(q+n+\ind_{v\notin R})(n-1)!n!} \nu D^{r_{\sigma^{-1}(n)}} D^I \Eulerop^{I,\hat{r}}_{v} \sigma \gamma,
\end{align*}
where for~$v\in R$, the wedge adds the missing term~$v= r_{\sigma^{-1}(n)}$, and rescales the expression with a coefficient~$\frac{n-1}{n}$.
On the other hand, we have
\begin{align*}
d_H \wedge \widetilde{J}_v \wedge \gamma
&=d_H \sum_{q=0}^\infty \sum_{\sigma\in \SS_n} \sum_{\nu\in \SS_{n+1}}\sum_{\underset{\abs{I}=q}{\Pi(v)=\{\hat{r}\}\cup I\cup J}}  \frac{\varepsilon(\sigma\nu)}{(q+n+\ind_{v\notin R})(n!)^2} \nu D^I \Eulerop^{I,\hat{r}}_{v} \sigma \gamma\\
&=\sum_{q=0}^\infty \sum_{\sigma\in \SS_n}\sum_{\nu\in \SS_{n+1}} \sum_{\underset{\abs{I}=q}{\Pi(v)=\{\hat{r}\}\cup I\cup J}}  \frac{\varepsilon(\sigma\nu)}{(q+n+\ind_{v\notin R})(n!)^2} D^{r_{(\sigma\nu)^{-1}(n+1)}} \nu D^I \Eulerop^{I,\hat{r}}_{v} \sigma \gamma\\
&=\sum_{q=0}^\infty \sum_{\sigma\in \SS_n} \sum_{\underset{\abs{I}=q}{\Pi(v)=\{\hat{r}\}\cup I\cup J}}  \frac{\varepsilon(\sigma)}{(q+n+\ind_{v\notin R})n!} D^{I,\hat{r}} \Eulerop^{I,\hat{r}}_{v} \sigma \gamma\\
&-\sum_{q=0}^\infty \sum_{\sigma\in \SS_n}\sum_{\nu\in \SS_n}\sum_{\underset{\eta(n)=n}{\eta\in \SS_n}} \sum_{\underset{\abs{I}=q}{\Pi(v)=\{\hat{r}\}\cup I\cup J}}  \frac{\varepsilon(\sigma\nu\eta)}{(q+n+\ind_{v\notin R})(n-1)!(n!)^2} \\&\cdot D^{r_{(\sigma\nu)^{-1}(n)}} \eta (n(n+1)) \nu D^I \Eulerop^{I,\hat{r}}_{v} \sigma \gamma\\
&=\sum_{q=0}^\infty \frac{q+1}{q+n+\ind_{v\notin R}} D^{q+1} \Eulerop^{q+1}_{v} \wedge \gamma\\
&-\sum_{q=0}^\infty \sum_{\sigma\in \SS_n} \sum_{\eta\in \SS_n} \sum_{\underset{\abs{I}=q}{\Pi(v)=\{\hat{r}\}\cup I\cup J}}  \frac{\varepsilon(\sigma\eta)}{(q+n+\ind_{v\notin R})(n-1)!n!} \eta D^{r_{\sigma^{-1}(n)}} D^I \Eulerop^{I,\hat{r}}_{v} \sigma \gamma\\
&=\sum_{q=1}^\infty \frac{q}{q+n-\ind_{v\in R}} D^q \Eulerop^q_{v} \wedge \gamma\\
&-\sum_{q=0}^\infty \sum_{\sigma\in \SS_n} \sum_{\eta\in \SS_n} \sum_{\underset{\abs{I}=q}{\Pi(v)=\{\hat{r}\}\cup I\cup J}}  \frac{\varepsilon(\sigma\eta)}{(q+n+\ind_{v\notin R})(n-1)!n!} \eta D^{r_{\sigma^{-1}(n)}} D^I \Eulerop^{I,\hat{r}}_{v} \sigma \gamma.
\end{align*}
Using Proposition~\ref{prop:higher_Euler_operators_Omega}, we get for~$n>1$,
$$
(\wedge \widetilde{J}_v d_H +d_H \wedge \widetilde{J}_v) \wedge \gamma
=\sum_{q=0}^\infty D^q \Eulerop^{q}_{v} \wedge \gamma=\wedge \gamma,
$$
and for~$n=1$,
$$
(\wedge \widetilde{J}_v d_H +d_H \wedge \widetilde{J}_v) \wedge \gamma
=\sum_{q=0}^\infty D^q \Eulerop^{q}_{v} \wedge \gamma - \Eulerop_{r} \wedge \gamma \ind_{v\in R}=\wedge \gamma- \Eulerop_{r} \wedge \gamma \ind_{v\in R}.
$$
Summing on~$v\in V$ gives the desired homotopy identities.

Following the proof of Proposition~\ref{proposition:L_d_vanish}, we observe that~$\Eulerop_r d_H=0$ on~$\Omega_{2,p}$. Thus, we deduce from the identity~\eqref{equation:homotopy_horizontal_div_free_n=1} that~$\gamma\in \Img(d_H)$ if and only if~$d_H \gamma=0$ and~$\Eulerop_r \gamma=0$ in the case~$n=1$.
Assume that~$d_H\gamma=0$ for~$\gamma\in\Omega_{1,p}^N$, and apply~$\Eulerop_r$ to the homotopy identity. As~$\Eulerop_r^2=\Eulerop_r$, it yields
$$
\Eulerop_r\gamma=\frac{1}{N}\Eulerop_r \gamma.
$$
We deduce that~$\Eulerop_r \gamma=0$ or~$N=1$. Thus, the bicomplex of order~$N$ is exact if and only if~$N\neq 1$.
\end{proof}

\begin{remark}
Let~$\gamma\in \widetilde{\Omega}_1^N$ with~$N>1$, define the modified homotopy operators by
$$\widetilde{h}_H^1=\widetilde{h}_H\Big(1+\frac{1}{N-1}\Eulerop\Big), \quad \widetilde{h}_H^2=\widetilde{h}_H\Big(1+\frac{1}{N-1}\Eulerop_r\Big).$$
Then, the homotopy identity~\eqref{equation:homotopy_horizontal_div_free_n=1} on~$\widetilde{\Omega}_1^N$ with~$N>1$ is replaced by the simpler identity
\begin{equation}
\label{equation:simpler_div_free_homotopy}
(d_H \widetilde{h}_H^2 + \widetilde{h}_H^1 d_H )\gamma=\gamma.
\end{equation}
The proof of~\eqref{equation:simpler_div_free_homotopy} relies on the identities~$\Eulerop_r^2=\Eulerop_r$ and~$\Eulerop d_H=d_H \Eulerop_r$ on~$\widetilde{\Omega}_1$.
\end{remark}

\section{Applications and extensions}
\label{section:applications}

The study of the aromatic bicomplex brings a variety of new theoretical results, as presented in Section~\ref{section:preliminaries}, but also direct applications in numerical analysis.
Subsection~\ref{section:augmented_bicomplex} is devoted to the study of the generalised aromatic bicomplex, a natural extension of the aromatic bicomplex that includes the Euler-Lagrange complex.
In Subsection~\ref{section:combinatorics_Robert}, we deduce from the exactness of the aromatic bicomplex the dimensions of the spaces in the first two rows of the bicomplex, as well, as the number of solenoidal forms.
We describe further the divergences and the solenoidal forms in Subsection~\ref{section:description_image_kernel_dH}.
In Subsection~\ref{section:IBP}, we draw a bridge between the existing notions of integration by parts of Butcher trees by defining an alternative horizontal homotopy operator.
In Subsection~\ref{section:application_vp_integrators}, we give an explicit description of the B-series of an aromatic volume-preserving integrator and we prove that an aromatic Runge-Kutta method cannot be volume-preserving.

\subsection{The augmented aromatic bicomplex}
\label{section:augmented_bicomplex}

Following~\cite{Anderson89tvb,Anderson92itt} and the work on the variational complex~\eqref{equation:variational_complex} of Subsection~\ref{section:Euler_operators}, we augment the aromatic bicomplex with the aromatic equivalent of the Euler-Lagrange complex.

For~$p\geq 1$, define the interior Euler operator~$I\colon \Omega_{0,p} \rightarrow \Omega_{0,p}$ by
$$
I\gamma=\wedge \Eulerop_{\Circled{p}} \gamma= (-1)^{\abs{\Pi(\Circled{p})}} \wedge (D^{\abs{\Pi(\Circled{p})}} \gamma_{\Circled{p}^{\starsymbol}})^{\starsymbol},
$$
write~$\II_p=I(\Omega_{0,p})$,~$\II_p^N=I(\Omega_{0,p}^N)$, and the variational derivative~$\delta_V=I\circ d_V$.
The augmented aromatic bicomplex is drawn in Figure~\ref{figure:augmented_bicomplex}.
\begin{figure}[!ht]
$$\begin{tikzcd}
 & \vdots & \vdots & \vdots & \vdots & \\
    \dots \arrow{r}{d_H} & \Omega_{2,2} \arrow{r}{d_H} \arrow{u}{d_V} & \Omega_{1,2} \arrow{r}{d_H} \arrow{u}{d_V} & \Omega_{0,2} \arrow{u}{d_V} \arrow{r}{I} & \II_2 \arrow{u}{\delta_V} \arrow{r} & 0\\
    \dots \arrow{r}{d_H} & \Omega_{2,1} \arrow{r}{d_H} \arrow{u}{d_V} & \Omega_{1,1} \arrow{r}{d_H} \arrow{u}{d_V} & \Omega_{0,1} \arrow{u}{d_V} \arrow{r}{I} & \II_1 \arrow{u}{\delta_V} \arrow{r} & 0\\
    \dots \arrow{r}{d_H} & \Omega_{2} \arrow{r}{d_H} \arrow{u}{d_V} & \Omega_{1} \arrow{r}{d_H} \arrow{u}{d_V} & \Omega_{0} \arrow{u}{d_V} \arrow{ru}{\delta_V} & & \\
     & 0 \arrow{u} & 0 \arrow{u} & 0 \arrow{u} & &
\end{tikzcd}$$
\caption{The augmented aromatic bicomplex.}
	\label{figure:augmented_bicomplex}
\end{figure}
The edge complex~\eqref{equation:Euler_Lagrange_complex} is called the Euler Lagrange complex, and it is the object of ultimate interest here. Note that the variational complex~\eqref{equation:variational_complex} is a subcomplex of the Euler Lagrange complex as~$\delta_V=\Eulerop^{\circ}$ on~$\Omega_0$.
\begin{equation}
\label{equation:Euler_Lagrange_complex}
\begin{tikzcd}
\dots \arrow{r}{d_H} & \Omega_{2} \arrow{r}{d_H} & \Omega_{1} \arrow{r}{d_H} & \Omega_{0} \arrow{r}{\delta_V} & \II_1 \arrow{r}{\delta_V}& \II_2 \arrow{r}{\delta_V}& \dots
\end{tikzcd}
\end{equation}

\begin{theorem}
\label{theorem:exact_generalized_bicomplex}
Define the augmented homotopy operators as
$$\mathfrak{h}_H\gamma=\wedge \sum_{q=1}^\infty \frac{1}{q} D^{q-1} \Eulerop^q_{\Circled{p}} \gamma,\quad \mathfrak{h}_V \gamma=I\circ h_V.$$
Then, the maps~$I$ and~$\delta_V$ satisfy
$$I^2=I,\quad I d_H=0,\quad \delta_V^2=0,
$$
and the following identities hold
\begin{align*}
\gamma&=(I+d_H \mathfrak{h}_H) \gamma, \quad \gamma \in \Omega_{0,p}, \quad p\geq 1,\\
\gamma&=(\delta_V  h_V+\mathfrak{h}_V  \delta_V) \gamma, \quad \gamma \in \II_1,\\
\gamma&=(\delta_V  \mathfrak{h}_V+\mathfrak{h}_V  \delta_V) \gamma, \quad \gamma \in \II_p, \quad p>1.
\end{align*}
In particular, the horizontal and vertical sequences of the augmented aromatic bicomplex are exact, and the Euler-Lagrange complex~\eqref{equation:Euler_Lagrange_complex} is exact.
\end{theorem}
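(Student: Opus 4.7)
The plan is to deduce every claim from the identities already established for the aromatic bicomplex, namely Propositions~\ref{proposition:derivatives_squared},~\ref{proposition:L_d_vanish},~\ref{prop:higher_Euler_operators_Omega},~\ref{proposition:vertical_homotopy}, and~\ref{proposition:horizontal_homotopy}. First, $I d_H = 0$ is immediate: for $\gamma \in \Omega_{1,p}$, the definition $I = \wedge \Eulerop_{\Circled{p}}$ together with Proposition~\ref{proposition:L_d_vanish} (valid for any node $v \in V$, in particular the covertex $v = \Circled{p}$) gives $I d_H \gamma = \wedge \Eulerop_{\Circled{p}} d_H \gamma = 0$. The central step is the horizontal identity $\gamma = (I + d_H \mathfrak{h}_H) \gamma$ on $\Omega_{0,p}$, $p \geq 1$: apply the decomposition of Proposition~\ref{prop:higher_Euler_operators_Omega} with $v = \Circled{p}$, split off the $q = 0$ summand (which wedges to $I \gamma$), and use the factorization $D^q = \frac{1}{q} D D^{q-1}$ from~\eqref{equation:identity_composition_derivatives} on the remaining terms; after wedging, the outer $D$ converts into $d_H$ acting on $\mathfrak{h}_H \gamma \in \Omega_{1,p}$. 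From this identity, $I^2 = I$ is immediate: apply $I$ to both sides and invoke $I d_H = 0$.

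The identity $\delta_V^2 = 0$ follows by substituting $I = \mathrm{id} - d_H \mathfrak{h}_H$ on $\Omega_{0,p}$:
$$
\delta_V^2 \gamma = I d_V I d_V \gamma = I d_V^2 \gamma - I d_V d_H \mathfrak{h}_H d_V \gamma = -I d_H d_V \mathfrak{h}_H d_V \gamma = 0,
$$
using $d_V^2 = 0$, the commutation $d_V d_H = d_H d_V$ from Proposition~\ref{proposition:derivatives_squared}, and $I d_H = 0$. For the vertical homotopy identities on $\II_p$, apply $I$ to Proposition~\ref{proposition:vertical_homotopy} to get $I = \delta_V h_V + I h_V d_V$ on $\Omega_{0,p}$; for $\gamma \in \II_p$, since $I\gamma = \gamma$, this yields $\gamma = \delta_V h_V \gamma + I h_V d_V \gamma$. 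To match the theorem's statement, convert $I h_V d_V \gamma$ into $\mathfrak{h}_V \delta_V \gamma = I h_V I d_V \gamma$: the discrepancy $I h_V d_H \mathfrak{h}_H d_V \gamma$ vanishes because $h_V$ and $d_H$ commute (covertex demotion and root grafting affect disjoint structural features), and $I d_H = 0$ finishes it. For $p > 1$, the analogous manipulation converts $\delta_V h_V \gamma$ into $\delta_V \mathfrak{h}_V \gamma$, the residual being $\delta_V d_H \mathfrak{h}_H h_V \gamma = I d_H d_V \mathfrak{h}_H h_V \gamma = 0$. Exactness of the augmented bicomplex and of the Euler-Lagrange complex~\eqref{equation:Euler_Lagrange_complex} is then a standard formal consequence of these four homotopy identities.

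The main obstacle is establishing the horizontal identity $\gamma = (I + d_H \mathfrak{h}_H) \gamma$: while morally it is the specialization of Proposition~\ref{proposition:horizontal_homotopy} to the covertex $\Circled{p}$, one must carry out a careful combinatorial bookkeeping converting the generic grafting operator $D$ into the horizontal derivative $d_H$ after wedging, along the lines of what is done in the proof of Proposition~\ref{proposition:horizontal_homotopy}. Once this identity is secured, every remaining assertion of Theorem~\ref{theorem:exact_generalized_bicomplex} reduces to $I d_H = 0$, the commutation $d_V d_H = d_H d_V$, and the already-established vertical homotopy.
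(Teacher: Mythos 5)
Your proposal is correct and follows essentially the same route as the paper's proof: the horizontal identity is obtained from Proposition~\ref{prop:higher_Euler_operators_Omega} specialized to $v=\Circled{p}$, $I^2=I$ and $\delta_V^2=0$ follow by applying $I$ and using $I d_H=0$ together with the commutations $d_Hd_V=d_Vd_H$ and $d_Hh_V=h_Vd_H$, and the vertical identities on $\II_p$ come from inserting the augmented horizontal homotopy identity (applied to $d_V\gamma$, and to $h_V\gamma$ when $p>1$) into Proposition~\ref{proposition:vertical_homotopy} before projecting with $I$. The only cosmetic difference is the order of operations in the $\II_p$ step (you apply $I$ first and then correct the discrepancy, the paper substitutes first and applies $I$ last), which changes nothing.
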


We follow the approach of \cite[Chap.\ts 4]{Anderson89tvb} for the proof of Theorem~\ref{theorem:exact_generalized_bicomplex}.
\begin{proof}
We first observe that for all~$\gamma\in\FF_{1,p}$,
$$
I d_H \gamma=\wedge \sum_{v\neq \Circled{p}} (-1)^{\abs{\Pi(\Circled{p})}} (D^{\abs{\Pi(\Circled{p})}} D^{r_1\rightarrow v} \gamma_{\Circled{p}^{\starsymbol}})^{\starsymbol}
+ \wedge(-1)^{\abs{\Pi(\Circled{p})}+1} (D^{\Pi(\Circled{p})}D^{r_1} \gamma_{\Circled{p}^{\starsymbol}})^{\starsymbol}=0.
$$
Using Equation~\eqref{equation:decomposition_Euler_op_v} with~$v=\Circled{p}$ gives the augmented horizontal homotopy identity
$$
(I+d_H \mathfrak{h}_H)\wedge \gamma=\wedge\gamma.
$$
Applying~$I$ to this last equality yields~$I\circ I=I$.
Let us now look at the Euler-Lagrange complex.
Let~$\gamma\in\Omega_{0,p}$, then~$d_V \gamma\in\Omega_{0,p+1}$. We apply the augmented horizontal homotopy identity to~$d_V \gamma$,
$$(I d_V +d_H \mathfrak{h}_H d_V)\gamma=d_V \gamma.$$
We apply~$d_V$ and use that~$d_V$ and~$d_H$ commute:
$$(d_V I d_V +d_H d_V \mathfrak{h}_H d_V)\gamma=0.$$
Since~$I d_H=0$, applying~$I$ yields 
$$\delta_V^2 \gamma=0.$$
Let~$\gamma\in \II_p$, the augmented horizontal homotopy applied to~$d_V\gamma$ gives
\begin{equation}
\label{equation:proof_augmented_homotopy_1}
d_V\gamma=\delta_V \gamma+d_H \mathfrak{h}_H d_V\gamma.
\end{equation}
If~$p=1$, then we use the identity~\eqref{equation:proof_augmented_homotopy_1} in the vertical homotopy identity to get
$$
\gamma=d_V h_V \gamma+h_V \delta_V \gamma +d_H \widetilde{\gamma},
$$
where we used that~$h_V$ and~$d_H$ commute and where~$\widetilde{\gamma}\in \Omega_{1,1}$.
Applying~$I$ yields
$$
\gamma=(\delta_V h_V+\mathfrak{h}_V \delta_V) \gamma.
$$
If~$p> 1$, we apply the augmented horizontal homotopy identity to~$h_V\gamma$,
$$h_V \gamma=\mathfrak{h}_V \gamma+d_H \mathfrak{h}_H h_V \gamma.$$
Applying the vertical derivative~$d_V$ gives
\begin{equation}
\label{equation:proof_augmented_homotopy_2}
d_V h_V \gamma=d_V \mathfrak{h}_V\gamma+d_H d_V \mathfrak{h}_H h_V \gamma,
\end{equation}
where we used that~$d_H$ and~$d_V$ commute.
We use the identities~\eqref{equation:proof_augmented_homotopy_1} and~\eqref{equation:proof_augmented_homotopy_2} in the vertical homotopy identity to get
$$
\gamma=d_V \mathfrak{h}_V \gamma+h_V  \delta_V \gamma +d_H \widetilde{\gamma},
$$
where~$\widetilde{\gamma}\in \Omega_{1,p}$.
As~$I\circ d_H=0$ and~$I \gamma=\gamma$, we find
$$
\gamma=(\delta_V \mathfrak{h}_V +\mathfrak{h}_V \delta_V) \gamma.
$$
The exactness of the augmented aromatic bicomplex is a straighforward consequence of Theorem~\ref{theorem:exact_aro_var_bi} and of the augmented homotopy identities.
\end{proof}

\subsection{Combinatorics on the aromatic bicomplex}
\label{section:combinatorics_Robert}

In this subsection, we determine the dimensions of the bottom two rows of the augmented aromatic bicomplex in the standard and in the divergence-free case. The primary motivation was to compute the dimension of the space of solenoidal (i.e.,  divergence-free) aromatic trees of each order. However, the result revealed a surprisingly simple connection with another combinatorial object, the self-looped scalar aromas, which allowed the construction of the fundamental spaces associated with the divergence operator.

We recall the fundamental generating functions associated with graphical enumeration. The number~$\TT^N$  of rooted trees of order~$N$ (sequence A000081 in the OEIS~\cite{OEIS}) has generating function
$$ t(z) = \sum_{N=1}^\infty \TT^N z^n = z + z^2 + 2 z^3 + 4 z^4 + 9 z^5 + 20 z^6 + \dots$$
and satisfies the functional equation
\begin{equation}
\label{eq:tid}
 t(z) = z \exp\left( \sum_{k=1}^\infty \frac{1}{k} t(z^k)\right).
 \end{equation}
Considering a rooted tree as a directed graph, each node except the root has a single outgoing edge. Thus, rooted trees are equivalent to the class of ``mapping patterns'' of functions~$\{1,\dots,N-1\}\to\{0,\dots,N-1\}$ modulo the symmetric group~$\SS_{N-1}$ (node 0 is the root, and we ``forget the labels'').

The number~$|\Omega^N_{0}|$ of scalar aromas in addition to the empty aroma (sequence A001372) has generating function
$$ a(z) := 1+\sum_{N=1}^\infty |\Omega^N_{0}| z^n = 1+ z + 3 z^2 + 7 z^3 + 19 z^4 + 47 z^5 + 130 z^6 + \dots$$
and is related to~$t(z)$ by the equation
$$ a(z) = \prod_{k=1}^\infty \left(1-t(z^k)\right)^{-1}.$$
The scalar aromas are mapping patterns of functions~$\{1,\dots,N\}\to\{1,\dots,N\}$ mod~$\SS_{N}$.

We introduce two new spaces, the aromatic forms with a~$1$-loop, called the self-looped aromatic forms~$\mathring{\Omega}_{n,k}$, and their complements, the non-self-looped aromatic forms~$\overline{\Omega}_{n,k}$, and generating functions~$\mathring{a}(z)$ of~$\mathring{\Omega}_{0}$ and~$\overline{a}(z)$ of~$\overline{\Omega}_{0}$.
As for mapping patterns on~$\{1,\dots,N\}$, the mappings in~$\mathring{\Omega}_{0}$ (enumerated by sequence A217896) have at least one fixed point  and those in~$\overline{\Omega}_{0}$ (also known as the ``functional digraphs'', enumerated by sequence A001373) have no fixed points.

Each pair consisting of one non-self-looped scalar aroma and one rooted tree generates a scalar aroma of one lower degree: cut off the root and replace each new root by a self-loop. The process is invertible (redirect all self-loops in an arbitrary scalar aroma to a new root). Expressed in terms of generating functions, it writes
$$ z a(z) = t(z) \overline{a}(z).$$
Therefore
\begin{equation}
\label{eq:aring}
 \mathring{a}(z) = a(z)-1-\overline{a}(z) = \frac{a(z)(t(z)-z)}{t(z)}-1 = z +2 z^2 + 5 z^3 + 13 z^4 + 34 z^5 + 90 z^6 + \dots.
\end{equation}

The following result enumerates the first two rows of the augmented bicomplex and the solenoidal forms. We refer to Table~\ref{table:dimensions_solenoidal} and Table~\ref{tab:dims} for the dimensions for the first orders~$N$.
\begin{theorem}
\label{theorem:counting_standard}
Let 
$$b_p(u,z) = \sum_{k=0}^\infty \sum_{N=1}^\infty \left| \Omega^N_{k,p}\right| u^k z^N$$
be the bivariate generating function for row~$p$ of the aromatic bicomplex, let
$$ c_p(z) = \sum_{N=1}^\infty \left| \mathcal{I}_p^N\right| z^N$$
be the generating function of the type-$p$ functional forms, and let
$$s(z) = \sum_{N=1}^\infty \left| \Psi^N\right| z^N$$
be the generating function of the solenoidal aromatic trees.
Then
\begin{align*}
b_0(u,z)&= a(z) \exp\left( \sum_{k=1}^\infty \frac{(-1)^{k-1}}{k}u^k t(z^k)\right),\\
b_1(u,z) &= b_0(u,z) \frac{t(z)(1 + u - u t(z))}{(1-t(z))^2},\\
c_1(z) &= z b_1(0,z) =  \frac{z a(z) t(z)}{(1-t(z))^2},\\
s(z) &= a(z)t(z)-\mathring{a}(z).
\end{align*}
\end{theorem}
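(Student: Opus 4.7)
The plan is to prove the four formulas in sequence, leaning throughout on the exactness theorems already established for the (augmented) aromatic bicomplex.

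For $b_0(u,z)$, I first observe that since the root wedge kills any aromatic forest containing two isomorphic trees, a basis of $\Omega^N_{n,0}$ is obtained by pairing a scalar aroma with an unordered collection of pairwise non-isomorphic rooted trees. The aroma factor contributes $a(z)$, and the antisymmetrized collection of trees gives the Euler product $\prod_{\tau\in\TT}(1+u z^{|\tau|})$. Expanding $\log(1+x)=\sum_{k\geq 1}(-1)^{k-1}x^k/k$ and interchanging the sums produces $\exp\bigl(\sum_{k\geq 1}(-1)^{k-1}u^k t(z^k)/k\bigr)$, which combined with $a(z)$ yields the claim.

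For $b_1(u,z)$, I would decompose elements of $\Omega^N_{n,1}$ by the connected component containing the unique covertex into three cases: (i)~the covertex lies inside a scalar aroma, (ii)~it sits strictly inside a vertex-rooted tree, or (iii)~it is itself the root of its own tree component, thereby contributing an extra factor $u$ to the root count. In cases (i) and (ii) one describes the structure via the path of outgoing edges leaving the covertex and running either into the aromatic cycle or down to the vertex root, with rooted subtrees branching off at each spinal node; these spinal configurations produce factors built from $1/(1-t(z))$. Assembling the three contributions, factoring $b_0(u,z)$ out, and simplifying using the bijection $za(z)=t(z)\overline{a}(z)$ to rewrite aroma contributions in terms of trees gives the stated factor $t(z)(1+u-ut(z))/(1-t(z))^2$. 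I expect this case analysis to be the main obstacle: one must keep careful track of when the covertex contributes a new root and of whether its spinal node has already been counted as part of the aromatic cycle.

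For the remaining two formulas I would exploit exactness. Fix $N$: the row $\dots\to\Omega^N_{2,1}\to\Omega^N_{1,1}\to\Omega^N_{0,1}\to\mathcal{I}_1^N\to 0$ of the augmented bicomplex (Theorem~\ref{theorem:exact_generalized_bicomplex}) is a bounded exact sequence, since covertices count among the $N$ nodes and all but finitely many terms vanish; its Euler characteristic therefore gives $|\mathcal{I}_1^N|=\sum_{k\geq 0}(-1)^k|\Omega^N_{k,1}|=[z^N]b_1(-1,z)$. Substituting $u=-1$ into the formula for $b_1$ and using the functional equation~\eqref{eq:tid} in the form $b_0(-1,z)=za(z)/t(z)$ gives $c_1(z)=\frac{za(z)t(z)}{(1-t(z))^2}$; the identity $c_1(z)=zb_1(0,z)$ then follows by evaluating the formula for $b_1$ at $u=0$. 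Similarly, telescoping the exact horizontal sequence of the aromatic bicomplex at the bottom row (Theorem~\ref{theorem:exact_aro_var_bi}) yields $|\Psi^N|=|\Ker(d_H|_{\Omega_1^N})|=\sum_{k\geq 2}(-1)^k|\Omega_k^N|=[z^N]\bigl(b_0(-1,z)-b_0(0,z)+[u]b_0(u,z)\bigr)$. Inserting $b_0(0,z)=a(z)$, $[u]b_0(u,z)=a(z)t(z)$, $b_0(-1,z)=za(z)/t(z)$, and the consequence $\mathring{a}(z)=a(z)(1-z/t(z))$ of~\eqref{eq:aring} reduces this to $s(z)=a(z)t(z)-\mathring{a}(z)$.
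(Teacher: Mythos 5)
Your treatment of $b_0$ and of $s$ coincides with the paper's: the power-set/Euler-product construction $a(z)\prod_{\tau\in\TT}\bigl(1+uz^{|\tau|}\bigr)$ for $b_0$, and, for $s$, the alternating sum over the exact bottom row combined with $b_0(-1,z)=za(z)/t(z)$ from~\eqref{eq:tid} and with~\eqref{eq:aring}. (Your constants differ from the paper's by the empty-forest bookkeeping --- you take $b_0(0,z)=a(z)$ rather than $a(z)-1$ and drop the $-1$ in~\eqref{eq:aring} --- but the two conventions cancel and the final identity is unaffected; the paper has the mirror-image pair of slips.) For $c_1$ you take a genuinely different route: the paper proves $|\mathcal{I}_1^N|=|\Omega_{0,1}^{N-1}|$ by an explicit bijection (delete the marked leaf and mark its target), and relegates the vanishing alternating sum of row $1$ to a consistency check in a remark; you instead read $c_1(z)=b_1(-1,z)$ off the exact augmented row and evaluate. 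This is legitimate given Theorem~\ref{theorem:exact_generalized_bicomplex} and is arguably cleaner, but it makes $c_1$ entirely dependent on first getting $b_1$ right, whereas the paper's bijection is independent of $b_1$.

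The gap is in $b_1$. The exponent of $u$ counts roots, and every tree component contributes exactly one root whether that root is a vertex or the covertex. So when a form whose covertex lies in a tree component is built as (an element of $\Omega_{n-1}$) times (one rooted tree containing the covertex), the marked tree always adds one root, and the factor $u$ must multiply the \emph{entire} marked-tree contribution $t(z)/(1-t(z))$; this is why the paper writes $u\,b_0(u,z)\,t(z)/(1-t(z))$ for its type (ii). Your prescription attaches the extra $u$ only to the sub-case in which the covertex is itself the root. Taken literally, your three cases assemble to
$$b_0(u,z)\left(\frac{t(z)}{(1-t(z))^2}+\frac{t(z)^2}{1-t(z)}+u\,t(z)\right),$$
which is not the stated $b_0(u,z)\,t(z)(1+u-ut(z))/(1-t(z))^2=b_0(u,z)\bigl(t(z)/(1-t(z))^2+u\,t(z)/(1-t(z))\bigr)$. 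It also fails the very check you rely on for $c_1$: at $u=-1$ your factor is $(3t^2-2t^3)/(1-t)^2$ rather than $t^2/(1-t)^2$, so you would not recover $c_1(z)=za(z)t(z)/(1-t(z))^2$. The spine decompositions you describe (marked connected aromas as one or two sequences of rooted trees, giving $t/(1-t)^2$; marked rooted trees as sequences, giving $t/(1-t)$) are exactly the paper's and are fine, and the identity $za(z)=t(z)\overline{a}(z)$ plays no role in this part. The fix is simply to merge your cases (ii) and (iii) into the single case ``covertex in a tree component'' and give that merged case the factor $u$.
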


\begin{proof}
We first note that the aromatic trees~$\Omega_{1}$ are given by the product of a scalar aroma (enumerated by~$a(z)$) with a rooted tree
(enumerated by~$t(z)$). This is the {\em Cartesian product} construction of enumerative combinatorics. Therefore the aromatic trees are enumerated\footnote{This is a right-shift of sequence A126285, the partial mapping patterns. From a partial mapping pattern on~$N-1$ nodes, add a new node (the root of the tree) and point any nodes without outgoing edges to it to get an aromatic vector field; to invert, cut off the root.} by~$a(z)t(z)$.

An element of~$\Omega^N_{k}$ is given by the product of a scalar (enumerated by~$a(z)$) with a wedge product of~$k$ rooted trees, enumerated by~$t(z)$. Since the rooted trees are unordered and distinct, this is the {\em power set} construction; the expression for~$b_0(u,z)$ then follows from \cite[Proposition III.5]{Flajolet09ac}.

The bottom row of the bicomplex is exact (Theorem~\ref{theorem:exact_aro_var_bi}), so the dimension of~$\Psi$ is given by the alternating sum
\begin{align*}
s(z) &= \sum_{N=1}^\infty \left(| \Omega_{2}^N | -  | \Omega_{3}^N | +  | \Omega_{4}^N | -  | \Omega_{5}^N | +\dots\right) z^N \\
&= \sum_{N=1}^\infty \left(|\Omega_{1}^N| - |\Omega_{0}^N|+\sum_{k=0}^\infty (-1)^k |\Omega_{k}^N| \right) z^N \\
&= a(z) t(z) - a(z)+1 +b_0(-1,z)\\
&= a(z) t(z) - a(z)+1 + a(z) \exp\left(-\sum_{k=1}^\infty \frac{1}{k} t(z^k))\right)  \\
&= a(z) t(z) - a(z)+1 + \frac{a(z)z}{t(z)} && \text{using Eq.~\eqref{eq:tid}}\\
&= a(z)t(z)-\mathring{a}(z) &&\text{using Eq.~\eqref{eq:aring}}.
\end{align*}

An element of~$\Omega_{n,1}$ is obtained from an element of~$\Omega_{n}$ by marking a node with the symbol~$\Circled{1}$. The marked node can be in one of the scalar aroma components or in one of the rooted tree components. Therefore, an element of~$\Omega_{n,1}^N$ is either
\begin{itemize}
\item[(i)] A scalar aroma with one marked node, times a wedge product of~$n$ distinct rooted trees; or
\item[(ii)] An unmarked scalar aroma times the wedge product of~$n-1$ distinct rooted trees times a single rooted tree with a marked node. The marked rooted tree can coincide with one of the unmarked ones.
\end{itemize}
For type (i), we first enumerate the scalar aromas with one marked node in terms of~$t(z)$ as follows. Consider the connected component containing the marked node. The marked node lies either on the cycle or on one of the trees attached to the cycle. Those in the first group are enumerated by the rooted trees with one marked node: delete the edge of the cycle that points to the marked node (the construction is invertible). Those in the second group are enumerated by sequences of {\em two} rooted trees, each with a marked node: add edges from the first root to the first marked node and from the second root to the first root, and remove the mark from the first marked node. The rooted trees with one marked node are enumerated by sequences (of any length) of rooted trees: delete the outgoing edges from the nodes on the path from the root to the marked node.

Putting this together, the rooted trees with one marked node are enumerated by the following (sequence A000107),
$$w(z):=t(z)+t(z)^2 + t(z)^3+\dots = t(z)/(1-t(z)).$$
The connected scalar aromas with one marked node are enumerated by $w(z) + w(z)^2 = t(z)/(1-t(z))^2$ (sequence A038002).
The scalar aromas with one marked node are enumerated by $a(z)t(z)/(1-t(z))^2$ (sequence A027853).
Including the unmarked scalar aroma component and the wedge product of rooted trees gives the contribution from type (i) forms as~$b_0(u,z)t(z)/(1-t(z))^2$.

For type (ii), combining the components of an unmarked form with one fewer trees (i.e. an element of~$\Omega_{n-1}$), enumerated by~$u b_0(u,z)$, and a marked rooted tree, enumerated by~$t(z)/(1-t(z))$, gives the contribution from type (ii) forms as~$u b_0(u,z)t(z)/(1-t(z))$.

Summing the results for type (i) and type (ii) forms gives the expression for~$b_1(u,z)$.

The functional forms in~$\mathcal{I}_1$ are associated with scalar aromas with one marked leaf (containing the symbol~$\Circled{1}$).
They are bijective to the scalar aromas with one marked node of degree one less: delete the marked leaf and mark the node to which it points (the process is invertible). Therefore, we deduce~$| \mathcal{I}_1^N| = |\Omega_{0,1}^{N-1}|$ and this gives the result for~$c_1(z)$.
\end{proof}

Note that
$$ \Omega_{1,1}^{N-1}\cong \Omega_{0,1}^{N-1}\cong \mathcal{I}_1^{N}$$
and that all three spaces are enumerated by the mapping patterns on~$N$ (or~$N+1$) points with one marked node.

\begin{remark}
As row 1 of the augmented bicomplex is exact, its alternating sum of dimensions is zero. This is verified directly:
\begin{align*}
\big(\sum_{n=0}^\infty (-1)^n |\Omega_{n,1}|\big) - |\mathcal{I}_1|
&= b_1(-1,z)-c_1(z) \\
& =b_0(-1,z)\frac{t(z)^2}{(1-t(z))^2} - \frac{a(z)t(z)z}{(1-t(z))^2} \\
&= \frac{a(z)z}{t(z)} \frac{t(z)^2}{(1-t(z))^2} - \frac{a(z)t(z)z}{(1-t(z))^2}\\
&= 0.
\end{align*}
\end{remark}

\begin{table}[ht]
\begin{center}
\begin{tabular}{|r|rrrrr||rrrrrr|}
\hline
& \multicolumn{5}{c||}{First row:~$|\Omega_{n}^N|$}
& \multicolumn{5}{c}{Second row:~$|\Omega_{n,1}^N|$}
&~$|\mathcal{I}_1^N|$\\
\hline
$N$ & \multicolumn{5}{c||}{$n$} & \multicolumn{5}{c}{$n$} & \\
 & 4 & 3 & 2 & 1 & 0				& 4 & 3 & 2 & 1 & 0 & \\
 \hline
 1 & 0 & 0 & 0 & 1 & 1			& 0 & 0 & 0 & 1 & 1 & 0 \\
 2 & 0 & 0 & 0 & 2 & 3 			& 0 & 0 & 1 & 4 & 4 & 1 \\
 3 & 0 & 0 & 1 & 6 & 7 			& 0 & 0 & 4 & 15 & 15 & 4\\
 4 & 0 & 0 & 3 & 16 & 19 			& 0 & 1 & 16 & 52 & 52 & 15\\
 5 & 0 & 0 & 11 & 45 & 47 			& 0 & 5 & 57 & 175 & 175 & 52 \\
 6 & 0 & 2 & 33 & 121 & 130 		& 0 & 22 & 197 & 571 & 571 & 175 \\
 7 & 0 & 7 & 102 & 338 & 343 		& 2 & 85 & 654 & 1838 & 1838 & 571\\
 8 & 0 & 29 & 298 & 929 & 951 		& 11 & 310 & 2137 & 5834 & 5834 & 1838\\
 9 & 1 & 99 & 878 & 2598 & 2615 	& 53 & 1078 & 6859 & 18363 & 18373 & 5834 \\
\hline
\end{tabular}
\caption{\label{tab:dims}Dimensions of the bottom two rows of the augmented aromatic bicomplex for orders 1 to 9.}
\end{center}
\end{table}


The following result extends Theorem~\ref{thm:divs} to the divergence-free case.
\begin{theorem}
\label{theorem:counting_div_free}
Let~$\widetilde{b}_p(u,z)$,~$\widetilde{c}_p(z)$, and~$\widetilde{s}(z)$ be the divergence-free analogues of the generating functions~$b_p(z)$,~$c_p(z)$, and~$s(z)$.
Then
\begin{align*}
\widetilde{b}_0(u,z)&= \frac{z b_0(u,z)}{t(z)},\\
\widetilde{b}_1(u,z) &= z b_0(u,z) \frac{t(z) + u - u t(z))}{(1-t(z))^2},\\
\widetilde{c}_1(z) &= z c_1(z),\\
\widetilde{s}(z) &= z+\frac{z s(z)}{t(z)} = z+\frac{z(a(z)t(z)-\mathring{a}(z))}{t(z)}.
\end{align*}
\end{theorem}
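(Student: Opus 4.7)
\noindent The plan is to adapt the enumeration arguments of Theorem~\ref{theorem:counting_standard} to the divergence-free setting, using as the central input the bijection $za(z)=t(z)(1+\overline{a}(z))$ (equivalently $1+\overline{a}(z)=za(z)/t(z)$) already established. Since the divergence-free condition removes forests containing a $1$-loop aroma and rooted trees contain no self-loops, only the scalar aroma factor must be replaced by its non-self-looped counterpart; the rooted-tree factors are untouched.

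\noindent The formulas for $\widetilde{b}_0$, $\widetilde{b}_1$ and $\widetilde{c}_1$ then follow by direct substitution in the derivations of Theorem~\ref{theorem:counting_standard}. For $\widetilde{b}_0(u,z)$, I would replace $a(z)$ in $b_0(u,z)=a(z)\exp(\sum_k(-1)^{k-1}u^k t(z^k)/k)$ by $za(z)/t(z)$, yielding $\widetilde{b}_0(u,z)=zb_0(u,z)/t(z)$. For $\widetilde{b}_1(u,z)$, I would revisit the type~(i)/type~(ii) split: type~(ii) (covertex in a rooted tree) requires only replacing the scalar aroma factor, contributing $uzb_0(u,z)/(1-t(z))$. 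Type~(i) (covertex in an aroma) requires enumerating non-self-looped scalar aromas with one marked node. The case bijections of Theorem~\ref{theorem:counting_standard} remain valid, and the cycle-length restriction $\ge 2$ translates (via ``delete the cycle edge pointing to the marked node'') into rooted trees whose marked node lies at depth $\ge 1$; case~1 then contributes $w(z)-t(z)=t(z)^2/(1-t(z))$ and case~2 contributes $(t(z)^2/(1-t(z)))\,w(z)=t(z)^3/(1-t(z))^2$. Their sum telescopes to $t(z)^2/(1-t(z))^2$ for connected non-self-looped aromas with marked node, giving $za(z)t(z)/(1-t(z))^2$ for the scalar version after multiplication by $1+\overline{a}(z)$. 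Adding types~(i) and~(ii) yields the stated $\widetilde{b}_1$. The identity $\widetilde{c}_1(z)=z\widetilde{b}_1(0,z)=zc_1(z)$ follows immediately because the bijection $\mathcal{I}_1^N\leftrightarrow\Omega_{0,1}^{N-1}$ of Theorem~\ref{theorem:counting_standard} (delete the leaf covertex, mark its target) trivially preserves the no-$1$-loop condition.

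\noindent The delicate part will be the last identity $\widetilde{s}(z)=z+zs(z)/t(z)$. By Theorem~\ref{theorem:exact_div_free_complex}, the divergence-free horizontal sequence is exact for $N\neq 1$, so the telescoping argument of Theorem~\ref{theorem:counting_standard} yields $|\widetilde{\Psi}^N|=\sum_{k\ge 2}(-1)^k|\widetilde{\Omega}_k^N|$ for $N\ge 2$. For $N=1$ the sequence fails to be exact, and one must add the direct contribution $|\widetilde{\Psi}^1|=1$ from $\atree1101$ by hand. Summing these contributions gives
\[
\widetilde{s}(z)=z+\widetilde{b}_0(-1,z)-\widetilde{b}_0(0,z)+[u^1]\widetilde{b}_0(u,z).
\]
Substituting $\widetilde{b}_0(u,z)=zb_0(u,z)/t(z)$, together with $b_0(-1,z)=za(z)/t(z)$ (from the functional equation $t(z)=z\exp(\sum_k t(z^k)/k)$, as used in Theorem~\ref{theorem:counting_standard}) and $[u^1]b_0(u,z)=a(z)t(z)$, the right-hand side becomes $z+z^2a(z)/t(z)^2-za(z)/t(z)+za(z)$. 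Using the identity $\mathring{a}(z)=a(z)(t(z)-z)/t(z)=a(z)-za(z)/t(z)$ and $s(z)=a(z)t(z)-\mathring{a}(z)$ then produces the claimed formula. The main obstacle is this final algebraic reduction combined with the $N=1$ exception, which must be handled separately; the rest of the proof is a mechanical specialization of Theorem~\ref{theorem:counting_standard}.
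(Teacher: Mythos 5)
Your proposal is correct and follows essentially the same route as the paper: replace the scalar-aroma factor $a(z)$ by its non-self-looped counterpart $za(z)/t(z)$ throughout the enumerations of Theorem~\ref{theorem:counting_standard}, and obtain $\widetilde{s}$ from the exactness of the divergence-free bicomplex via the alternating sum of dimensions, with the non-exact case $N=1$ contributing the extra $z$. The one place where you genuinely diverge is the enumeration of marked non-self-looped scalar aromas (needed for the type~(i) forms and for $\widetilde{\Omega}_{0,1}$): the paper uses the bijection $\widetilde{\Omega}_{0,1}^N\cong\Omega_{0,1}^{N-1}$ (append a new node, redirect all self-loops to it, move the mark), whereas you redo the cycle decomposition with the cycle-length-$\geq 2$ restriction; both correctly yield $za(z)t(z)/(1-t(z))^2$. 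Your explicit algebraic reduction for $\widetilde{s}$, together with the normalization $1+\overline{a}(z)=za(z)/t(z)$ (which quietly repairs the constant-term bookkeeping in Eq.~\eqref{eq:aring}), checks out against the tabulated dimensions, and is a step the paper leaves implicit.
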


\begin{proof}
In each case we need to enumerate the non-self-looped elements of the aromatic bicomplex. The loops occur only in the scalar components. Recall that the non-self-looped scalars,~$\tOmega_{0}$, are enumerated by~$\overline{a}(z) = z a(z)/t(z)$. This gives the result for~$\widetilde{b}_0$ and, using Theorem~\ref{theorem:exact_div_free_complex},~$\widetilde{s}(z)$. 

For the second row, we first consider~$\tOmega_{0,1}^N$, the scalar aromas with~$N$ nodes, no self-loops, and one marked node indicated by~$\Circled{1}$. These are enumerated by the self-functions of~$\{1,\dots,N-1\}$ with one marked node. That is,~$\tOmega_{0,1}^N\cong \Omega_{0,1}^{N-1}$. The construction is as follows. Consider an element of~$\Omega_{0,1}^{N-1}$, that is, a directed graph
with~$N-1$ nodes, each of which has exactly one outgoing edge, with one marked node. Add a new node with an outgoing edge going to the marked node, and redirect all self-loops to the new node; then, move the mark to the new node. This gives a marked non-self-looped scalar aroma of degree~$N$. The process is invertible: given a marked scalar with no self-loops, redirect the edges that point to the marked node to themselves, mark the node pointed to by the marked node, and delete the marked node.
Therefore~$\tOmega_{0,1}^N$ has generating function~$z a(z)t(z)/(1-t(z))^2$.

For the rest of the second row,~$\tOmega_{n,1}$, recall the two types of forms, type (i) (scalar is marked) and type (ii) (tree is marked).
For type (i),  that the only change in the divergence-free case is that the scalars must be non-self-looped, as just enumerated. For type (ii), we combine the three components of non-self-looped scalars (enumerated by~$a(z)$),~$n-1$ unmarked rooted trees (enumerated by~$u b_0(u,z)/a(z)$), and one marked rooted tree (enumerated by~$t(z)/(1-t(z))$). The product of these three, plus the contribution from the forms of type (i), gives the result for~$b_1(u,z)$.

The elements of~$\widetilde{\mathcal{I}}_1^N=I(\widetilde{\Omega}_{0,p}^N)$ are linear combinations of scalar aromas with~$N$ nodes, one marked leaf, and no self-loops.
As in the general case, these are bijective to the scalar aromas with one marked node of degree one less: delete the marked leaf and mark the node to which it points. This gives the result for~$\widetilde{c}_1(z)$.
\end{proof}

Note that we now have {\em five} isomorphic spaces,
$$ \Omega_{1,1}^{N-1}\cong \Omega_{0,1}^{N-1}\cong \mathcal{I}_1^{N}\cong \widetilde{\mathcal{I}}_1^{N+1}
\cong\tOmega_{0,1}^N.$$
Each space is enumerated by the self-functions of~$\{1,\dots,N-1\}$ with one marked node (and generating
function~$z a(z)t(z)/(1-t(z))^2$), but in a different way in each case.

We remark that the second row of the divergence-free augmented aromatic bicomplex is not exact.

\subsection{Bases of the kernel and image of~$d_H$ and~$d_H^*$}
\label{section:description_image_kernel_dH}

In this subsection, we work specifically with~$d_H\colon \Omega_{1}\to \Omega_{0}$ and we describe the image and the kernel of~$d_H$ and~$d_H^*$.
We recall that~$d_H^*\colon \Omega_{0}^*\to \Omega_{1}^*$ is the dual map of~$d_H$. We described the dimension of~$\Psi=\Ker d_H$ in Subsection~\ref{section:combinatorics_Robert}. The following result describes the dimensions of~$\im d_H$,~$\Ker d_H^*$, and~$\im d_H^*$.

\begin{theorem}
The scalar divergences~$\im d_H$ have dimension~$|\mathring\Omega_{0}|$.
The conditions~$\Ker d_H^*$ that a scalar must satisfy to be a divergence have dimension~$|\overline{\Omega}_{0}|$.
The conditions~$\im d_H^*$ that a vector must satisfy to be divergence-free have dimension~$|\mathring\Omega_{0}|$.
\end{theorem}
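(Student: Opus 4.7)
The plan is to reduce all three dimension counts to a single application of the rank--nullity theorem, combined with the counting identity from Theorem~\ref{theorem:counting_standard} and the obvious disjoint decomposition of scalar aromas. Since the spaces $\Omega_1^N$ and $\Omega_0^N$ are finite dimensional for each $N$, I would work order by order and then sum.

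First, I would apply rank--nullity to the restriction $d_H\colon \Omega_1^N \to \Omega_0^N$, obtaining
\[
|\Omega_1^N| = \dim\Ker(d_H|_{\Omega_1^N}) + \dim\im(d_H|_{\Omega_1^N}) = |\Psi^N| + \dim \im d_H.
\]
Theorem~\ref{theorem:counting_standard} gives $|\Psi^N| = |\Omega_1^N| - |\mathring\Omega_0^N|$, so $\dim \im d_H = |\mathring\Omega_0^N|$, which is the first claim.

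Next, for the third claim, I would use the standard linear-algebra fact that the rank of a linear map between finite-dimensional spaces equals the rank of its dual, giving $\dim \im d_H^* = \dim \im d_H = |\mathring\Omega_0^N|$. Under the pairing $\Omega_1^* \times \Omega_1 \to \R$, the subspace $\im d_H^* \subset \Omega_1^*$ is precisely the annihilator of $\Ker d_H = \Psi$, i.e. the space of independent linear conditions characterizing divergence-free trees, so the number of such conditions is $|\mathring\Omega_0^N|$.

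Finally, for $\Ker d_H^*$, I would apply rank--nullity to the dual map $d_H^*\colon (\Omega_0^N)^* \to (\Omega_1^N)^*$:
\[
|\Omega_0^N| = \dim\Ker d_H^* + \dim \im d_H^* = \dim \Ker d_H^* + |\mathring\Omega_0^N|.
\]
By definition, $\mathring\Omega_0$ consists of scalar aromas containing at least one $1$-loop and $\overline\Omega_0$ of those with none, so $\Omega_0^N = \mathring\Omega_0^N \sqcup \overline\Omega_0^N$ and $|\Omega_0^N| - |\mathring\Omega_0^N| = |\overline\Omega_0^N|$. Hence $\dim \Ker d_H^* = |\overline\Omega_0^N|$, which is the second claim; under the pairing, $\Ker d_H^* \subset \Omega_0^*$ is the annihilator of $\im d_H$, i.e. the space of independent linear conditions a scalar aroma must satisfy in order to be a divergence.

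There is essentially no obstacle here: the heavy lifting has already been done in Theorem~\ref{theorem:exact_aro_var_bi} (exactness, giving the formula for $|\Psi^N|$) and Theorem~\ref{theorem:counting_standard} (combinatorial enumeration). The only point requiring care is the translation between ``conditions'' on scalars/vectors and annihilator subspaces of the dual, which is standard but should be stated explicitly so the reader sees that $\im d_H^*$ and $\Ker d_H^*$ really are the two spaces of interest.
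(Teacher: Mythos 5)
Your proposal is correct and follows essentially the same route as the paper: the paper's proof likewise invokes the fundamental theorem of linear algebra for $A=d_H$ (rank--nullity, $|\im A|=|\im A^*|$, and the identifications $\im A^*=\mathrm{Ann}(\Ker A)$, $\Ker A^*=\mathrm{Ann}(\im A)$), with the dimension of $\Psi$ supplied by Theorem~\ref{theorem:counting_standard} and the decomposition $|\Omega_0^N|=|\mathring\Omega_0^N|+|\overline\Omega_0^N|$. You have merely spelled out the order-by-order bookkeeping that the paper leaves implicit.
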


\begin{proof}
Recall the fundamental theorem of linear algebra for a linear map~$A\colon V \to W$:
$$
|\im A| = |\im A^*| = \mathrm{rank} A, \quad
|\im A| +|\Ker A| =|V|, \quad
|\im A^*| + |\Ker A^*| =|W|.
$$
The space~$\im A^*= \mathrm{Ann}(\Ker A)$ is the annihilator of the kernel of~$A$ (i.e., the conditions that an element of~$A$ must satisfy in order to lie in the kernel) and~$\Ker A^*
 = \mathrm{Ann}(\im A)$ is the annihilator of the image of~$A$.
Choosing~$A=d_H$ gives the result.
\end{proof}

\begin{remark}
Consider the aromatic tree~$\gamma\backslash e\in \FF_{1}$ obtained by cutting one edge~$e\in E$ of~$\gamma\in \FF_{0}$,~$m_1(\gamma,e)$ the coefficient of~$\gamma$ in~$d_H(\gamma\backslash e)$, and~$m_2(\gamma,e)$ the number of edges~$\widehat{e}\in E$ of~$\gamma$ such that~$\gamma\backslash \widehat{e}=\gamma\backslash e$. Then, for~$\gamma\in \Omega_{0}$,~$d_H^*\gamma^*$ satisfies
$$d_H^*\gamma^*=\sum_{e\in E} \frac{m_1(\gamma,e)}{m_2(\gamma,e)}(\gamma\backslash e)^*.$$
\end{remark}

We now construct bases of~$\Ker d_H$,~$\im d_H$,~$\Ker d_H^*$, and~$\im d_H^*$.
We start with the basis of the solenoidal forms.
\begin{theorem}
\label{theorem:basis_solenoidal_forms}
Let~$\varphi\colon \Omega_{1}\to\mathring\Omega_{0}$ be defined by attaching a self-loop to the root, extending by linearity. Define any total order on~$\TT$, then a basis of~$\Psi=\Ker d_H$ is
$$\BB_\Psi=\{d_H(\phi \varphi(t_2)\dots\varphi(t_{l-1})\varphi(t_{l+1})\dots\varphi(t_{k}) t_1\wedge t_l),\quad
\phi\in \FF_0\cup\{\emptyset\},\quad t_1<\dots<t_k\in \TT
\}.$$
\end{theorem}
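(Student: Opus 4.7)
My plan has three steps. First, each element of $\BB_\Psi$ lies in $\Psi$: by Proposition~\ref{proposition:derivatives_squared}, $d_H^2=0$, so the image of $d_H$ on $\Omega_2$ is contained in $\Ker d_H|_{\Omega_1}=\Psi$, giving containment for free.

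Second, I would reduce the basis claim to a question inside $\Omega_2$. By exactness of the aromatic bicomplex (Theorem~\ref{theorem:exact_aro_var_bi}), $d_H$ induces an isomorphism $\Omega_2/\Img(d_H|_{\Omega_3})\cong\Psi$. Writing $\eta_{\phi,t_1<\dots<t_k,l}=\phi\,\varphi(t_2)\cdots\varphi(t_{l-1})\varphi(t_{l+1})\cdots\varphi(t_k)\,t_1\wedge t_l$ for the pre-images in $\Omega_2$, it suffices to show that these $\eta_\alpha$ project to a basis of $\Omega_2/\Img(d_H|_{\Omega_3})$. The core idea is a canonical-form algorithm: starting from any $\eta\in\Omega_2$, a sequence of moves modulo $\Img(d_H|_{\Omega_3})$ pulls the smallest tree occurring anywhere in $\eta$ (either wedged or hidden inside a $\varphi$-factor of the aroma) into wedged position. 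The key relation comes from expanding $d_H(\phi'\,s\wedge t\wedge u)\in\Omega_2$ for a 3-wedge in $\Omega_3$: it allows the exchange of a tree sitting in a $\varphi$-loop with one of the two wedged trees, at the cost of boundary terms that are already in canonical form. A dimension count based on the P\'olya-product techniques of Subsection~\ref{section:combinatorics_Robert} then matches the number of canonical representatives of order $N$ with $|\Psi^N|=|\Omega_1^N|-|\mathring{\Omega}_{0}^N|$ from Theorem~\ref{theorem:counting_standard}, which together with the reduction secures both independence and spanning.

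The main obstacle will be the canonical-form reduction: $d_H$ applied to a 3-wedge produces many grafting terms through the antisymmetrization, and careful combinatorial bookkeeping (in the style of the computations for Propositions~\ref{proposition:horizontal_homotopy} and~\ref{proposition:divergence-free_horizontal_homotopy}) is needed to ensure that the exchange moves terminate and yield a well-defined canonical form. A natural termination argument is to order configurations by the multiset of trees in wedged position under a lexicographic order induced by the chosen total order on $\TT$; each exchange move should strictly decrease this multiset.
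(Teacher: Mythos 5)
Your overall strategy is sound but genuinely different from the paper's. The paper never works in $\Omega_{2}/\Img(d_H|_{\Omega_{3}})$: it introduces the map $\varphi\colon\Omega_{1}\to\mathring{\Omega}_{0}$ attaching a self-loop to the root, observes that $\varphi$ is surjective so that $\dim\Ker\varphi=|\Omega_{1}|-|\mathring{\Omega}_{0}|$, which equals $\dim\Psi$ by Theorem~\ref{theorem:counting_standard}, writes down an explicit basis of $\Ker\varphi$ from the fibres of $\varphi$ (the differences of the $k$ distinct preimages of a self-looped scalar with $k$ distinct $1$-looped components), and transports it to $\Psi$ via $\phi_l(t_1\varphi(t_l)-\varphi(t_1)t_l)\mapsto d_H(\phi_l\,t_1\wedge t_l)$, surjective by exactness and hence an isomorphism by the dimension count. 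Your reduction to showing that the $\eta_\alpha$ descend to a basis of $\Omega_{2}/\Img(d_H|_{\Omega_{3}})$ is correct, and your final dimension count amounts to the same fibre count the paper uses.

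The gap is in the canonical-form reduction, and both specific mechanisms you propose for it fail as stated. First, the boundary terms of the three-wedge relation are not ``already in canonical form'': expanding $d_H(\phi'\,u\wedge s\wedge t)$, besides the three terms $\pm\phi'\varphi(u)\,s\wedge t\mp\phi'\varphi(s)\,u\wedge t\pm\phi'\varphi(t)\,u\wedge s$ coming from grafting a root onto itself, one gets terms in which a detached root is grafted onto a node of another tree or of the aroma $\phi'$; when the target node lies in a $1$-loop component $\varphi(w)$ of $\phi'$, that component becomes $\varphi(w')$ for a strictly larger tree $w'$, and the resulting form can again have its smallest distinguished tree hidden in a $\varphi$-factor, i.e.\ it is not canonical. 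Second, your termination measure does not decrease: grafting $u$ onto a node of the wedged tree $s$ replaces the wedged multiset $\{s,t\}$ by $\{s',t\}$ with $s'$ strictly larger than $s$, so the multiset of wedged trees can go up under an exchange move. The repair is to induct on the number of $1$-loops of the aroma instead: every term of $d_H(\phi'\,u\wedge s\wedge t)$ other than the three displayed ones has strictly fewer $1$-loops than $\phi'\varphi(u)\,s\wedge t$, while the two terms $\phi'\varphi(s)\,u\wedge t$ and $\phi'\varphi(t)\,u\wedge s$ are already canonical whenever $u$ is the smallest distinguished tree. With that replacement your argument closes and yields an alternative proof; without it, the spanning step --- which carries all the content of the theorem beyond the counting --- is not established.
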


\begin{proof}
The map~$\varphi$ is surjective: given any self-looped scalar, removing one of the self-loops gives a pre-image. Therefore,~$|\Ker\varphi| = |\Ker d_H|$.
Let us first determine~$\Ker\varphi$. Consider a self-looped scalar whose distinct self-looped connected components are~$\varphi(t_1),\dots, \varphi(t_k)$ for trees~$t_1<\dots<t_k$; it can be written~$\phi\varphi(t_1)\dots\varphi(t_k)$ where~$\phi$ is a scalar. Its distinct preimages under~$\varphi$ are~$\phi \varphi(t_1) \dots t_l \dots \varphi(t_k)$ for~$l=1,\dots,k$. Therefore, the kernel of~$\varphi$ restricted to the span of these preimages has
dimension~$k-1$ and we consider
$$ \{\phi (t_1 \varphi(t_2)-\varphi(t_1)t_2)\varphi( t_3)\dots \varphi(t_k),\dots,
\phi(t_1 \varphi(t_k) - \varphi(t_1)t_k) \varphi(t_2)\dots\varphi(t_{k-1})\}$$
as a basis of~$\Ker\varphi$.
We now map~$\Ker\varphi$ to~$\Ker d_H$ by 
$$\phi_l(t_1 \varphi(t_l) - \varphi(t_1)t_l)
\mapsto
d_H(\phi_l t_1\wedge t_l),\quad \text{where} \quad \phi_l=\phi \varphi(t_2)\dots\varphi(t_{l-1})\varphi(t_{l+1})\dots\varphi(t_{k}),$$
extending by linearity. From exactness, the map is surjective. As~$|\Ker\varphi| = |\Ker d_H|$, it is an isomorphism.
\end{proof}

\begin{remark}
\label{remark:basis_div_free_solenoidal}
One could wonder whether the set~$\{d_H \wedge \gamma, \gamma\in \FF_2\}$ is a basis of~$\Ker(d_H)$. This is not the case in general.
For~$N=6$, one finds for instance the following identity
\begin{align*}
d_H&(
\wedge\atree1001 \atree2101 \atree3102
+\wedge \atree2001 \atree3102 \atree1101
+\wedge \atree3001 \atree1101 \atree2101
+\wedge \atree1101 \atree5105
+\wedge \atree3101 \atree3102
+\wedge \atree5107 \atree1101
\\&+\wedge \atree5104 \atree1101
+\wedge \atree2101 \atree4104
+\wedge \atree2101 \atree4103
+\wedge \atree2002 \atree3102 \atree1101
+\wedge \atree3002 \atree1101 \atree2101
+\wedge \atree3003 \atree1101 \atree2101
)=0.
\end{align*}
\end{remark}

The following result shows that the divergences are a graph over the self-looped scalars.
\begin{theorem}
\label{thm:divs}
For~$\alpha\in\mathring{\Omega}_{0}$, let~$k(\alpha)$ be the number of self-loops in~$\alpha$, and 
$\rho(\alpha)$ be the non-self-looped scalar obtained from~$\alpha$ as the sum of the redirection of all~$1$-loops to other nodes in all possible ways.
Then the map
$$ \mathring{\Omega}_{0} \to \im d_H,\quad \alpha \mapsto \alpha + (-1)^{k(\alpha)-1}\rho(\alpha)$$
is an isomorphism, and generates a basis of~$\im d_H$.
\end{theorem}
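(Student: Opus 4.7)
The plan is to leverage the dimension count $|\mathring{\Omega}_{0}| = |\im d_H|$ established in the preceding theorem and reduce the statement to two claims: (a) every element $\alpha + (-1)^{k(\alpha)-1}\rho(\alpha)$ lies in $\im d_H$, and (b) the linear extension $T\colon \Span(\mathring{\Omega}_{0}) \to \Omega_0$ is injective. Granting both, $T$ maps into $\im d_H$ with the correct dimension and hence bijects onto it, and the images of the basis $\mathring{\Omega}_0$ form a basis of $\im d_H$.

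For (a), I would produce a pre-image $P(\alpha) \in \Omega_1$ with $d_H P(\alpha) = \alpha + (-1)^{k(\alpha)-1}\rho(\alpha)$ by induction on $k = k(\alpha)$. The base case $k = 1$ is almost tautological: if $v$ is the unique self-looped node, then the tree $\gamma$ obtained by deleting the self-loop at $v$ and promoting $v$ to the root satisfies $d_H \gamma = D^v \gamma = \alpha + \rho(\alpha)$, the first summand arising from the self-attachment $v \to v$ and the rest from attachments $v \to u$ with $u \neq v$. For $k \geq 2$, I pick one distinguished self-loop (say at $v_1$), form the associated tree $\gamma_1$, and introduce the \emph{partial} redirection $\rho_{\{1\}}(\alpha)$ that moves only the self-loop at $v_1$; then $d_H \gamma_1 = \alpha + \rho_{\{1\}}(\alpha)$, where each summand of $\rho_{\{1\}}(\alpha)$ is an aroma with exactly $k-1$ self-loops. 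Setting $P(\alpha) = \gamma_1 - P(\rho_{\{1\}}(\alpha))$ with $P$ extended linearly, applying the inductive hypothesis $d_H P(\beta) = \beta + (-1)^{k-2}\rho(\beta)$ to each summand $\beta$ of $\rho_{\{1\}}(\alpha)$, and using the combinatorial identity $\sum_\beta \rho(\beta) = \rho(\alpha)$ (which expresses that independently choosing a target for $v_1$ and then targets for the remaining $k-1$ self-loops is the same as choosing targets for all $k$ self-loops simultaneously), a telescoping of the $\rho_{\{1\}}(\alpha)$ contributions yields $d_H P(\alpha) = \alpha + (-1)^{k-1}\rho(\alpha)$, as required.

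For (b), let $\pi\colon \Omega_0 \to \Span(\mathring{\Omega}_0)$ be the linear projection killing the non-self-looped basis aromas. Since $\rho(\alpha)$ is by construction non-self-looped, $\pi(T(\alpha)) = \alpha$, so $\pi \circ T = \mathrm{id}$ on $\Span(\mathring{\Omega}_0)$ and $T$ is injective. Combined with (a) and the dimension count, this gives the isomorphism and basis claims at once.

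I expect the only real obstacle to be the combinatorial bookkeeping in the inductive step, specifically the interchange of summations underpinning $\sum_\beta \rho(\beta) = \rho(\alpha)$ and the associated sign tracking; the dependence of the construction on the arbitrary choice of the distinguished self-loop $v_1$ is harmless, since one only needs to exhibit a single pre-image and the target $T(\alpha)$ is itself symmetric in the self-loops of $\alpha$.
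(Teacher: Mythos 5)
Your proposal is correct, and it shares the overall skeleton of the paper's proof: both arguments (i) show that $\alpha+(-1)^{k(\alpha)-1}\rho(\alpha)$ lies in $\im d_H$ by a telescoping induction over the self-loops, (ii) obtain injectivity from the fact that the map is the identity plus a non-self-looped perturbation (your $\pi\circ T=\mathrm{id}$ is exactly the paper's ``each self-looped scalar appears once in the image''), and (iii) conclude via the dimension count $|\im d_H|=|\mathring{\Omega}_{0}|$. Where you genuinely diverge is in step (i): the paper deduces $\alpha-\Eulerop_v\alpha\in\Img(d_H)$ from the Euler-operator decomposition of Proposition~\ref{prop:higher_Euler_operators_Omega} and then composes the operators $\Eulerop_v$ over the self-looped nodes, whereas you bypass the Euler operators entirely and build an explicit preimage $P(\alpha)\in\Omega_1$ by deleting one distinguished self-loop, promoting its node to a root, and recursing on the partially redirected aromas. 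Your route is more elementary (it needs only the definition of $d_H$ as a grafting sum, not Lemma~\ref{lemma:Leibniz_divergence} or Proposition~\ref{prop:higher_Euler_operators_Omega}) and has the bonus of producing the preimage explicitly, which matches the integration-by-parts picture in the remark following the theorem and the algorithm of Subsection~\ref{section:IBP}; what it costs is the combinatorial bookkeeping you already flag, namely checking that only the reattachment $v_1\to v_1$ recreates $\alpha$ in $d_H\gamma_1$ (true, since grafting elsewhere strictly decreases the number of self-loops) and the Fubini-type identity $\sum_\beta c_\beta\,\rho(\beta)=\rho(\alpha)$, both of which hold with the multiplicities counted as sums over target nodes. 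The only cosmetic point worth adding is that the isomorphism is meant gradewise in the order $N$, which is automatic since both $T$ and $d_H$ preserve $N$.
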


\begin{proof}
Let~$\mathring{V}$ be the set of nodes with self-loops of~$\alpha$.
From Proposition~\ref{prop:higher_Euler_operators_Omega}, we deduce that for~$v\in\mathring{V}$,~$\alpha-\Eulerop_v \alpha\in \Img(d_H)$.
If we have two nodes~$v,w\in\mathring{V}$, then~$\alpha-\Eulerop_v \alpha\in \Img(d_H)$ and~$\Eulerop_v \alpha-\Eulerop_w \Eulerop_v \alpha\in \Img(d_H)$, so that~$\alpha-\Eulerop_w \Eulerop_v \alpha\in \Img(d_H)$.
By applying iteratively this process, we find that
$$\alpha-\prod_{v\in\mathring{V}} \Eulerop_v \alpha=\alpha + (-1)^{k(\alpha)-1}\rho(\alpha)\in \Img(d_H).$$
As each self-looped scalar appears once in the image, the map is injective.
The map is an isomorphism as the domain and codomain have the same dimension.
\end{proof}

\begin{remark}
The operation~$\rho$ in Theorem~\ref{thm:divs} corresponds to removing all self-loops in~$\alpha$ by repeated integration by parts, as illustrated in the following example on elementary differentials:
\begin{align*}
f^i_i f^j_j &= (f^i f^j_j)_i - f^i f^j_{ij} \\
&= (f^i f^j_j)_i - (f^i f^j_i)_j+f^i_j f^j_i \\
&= (f^i f^j_j - f^j f^i_j)_i + f^i_j f^j_i.
\end{align*}
That is,~$f^i_i f^j_j-f^i_j f^j_i$ is a divergence.
We describe this comparison with integration by parts further in Subsection~\ref{section:IBP}.
\end{remark}

\begin{corollary}
\label{corollary:self_loop_is_divergence}
No non-trivial combination of non-self-looped scalars in~$\Omega_0$ is a divergence.
\end{corollary}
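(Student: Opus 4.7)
The plan is to derive this directly from Theorem~\ref{thm:divs} by exploiting the direct sum decomposition $\Omega_0 = \overline{\Omega}_0 \oplus \mathring{\Omega}_0$ (modulo the empty aroma) into non-self-looped and self-looped scalars. Every divergence lies in $\mathrm{Im}(d_H)$, and by the isomorphism $\alpha \mapsto \alpha + (-1)^{k(\alpha)-1}\rho(\alpha)$ from Theorem~\ref{thm:divs}, an arbitrary element of $\mathrm{Im}(d_H)$ can be written uniquely as
\[
\omega = \sum_{\alpha \in \mathring{\Omega}_0} c_\alpha \bigl( \alpha + (-1)^{k(\alpha)-1} \rho(\alpha)\bigr),
\]
where only finitely many coefficients $c_\alpha$ are nonzero.

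Next I would split $\omega$ according to the decomposition $\Omega_0 = \overline{\Omega}_0 \oplus \mathring{\Omega}_0$. Since $\rho(\alpha) \in \overline{\Omega}_0$ by construction (all $1$-loops have been redirected), the self-looped component of $\omega$ is exactly $\sum_\alpha c_\alpha \alpha$, while the non-self-looped component is $\sum_\alpha c_\alpha (-1)^{k(\alpha)-1} \rho(\alpha)$.

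Now suppose $\omega \in \overline{\Omega}_0$, i.e.\ its self-looped component vanishes: $\sum_\alpha c_\alpha \alpha = 0$. Because the self-looped scalars $\{\alpha\}_{\alpha \in \mathring{\Omega}_0}$ form a basis of $\mathring{\Omega}_0$, this forces $c_\alpha = 0$ for every $\alpha$, and therefore $\omega = 0$. This gives the claim.

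There is no real obstacle here beyond invoking Theorem~\ref{thm:divs}: the proof is essentially a one-line consequence of the fact that the self-looped part of the isomorphism is the identity on $\mathring{\Omega}_0$, so projecting $\mathrm{Im}(d_H)$ onto the self-looped summand is already injective, leaving no room for a nonzero divergence to live entirely in $\overline{\Omega}_0$.
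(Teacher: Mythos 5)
Your proof is correct and follows exactly the route the paper intends: Corollary~\ref{corollary:self_loop_is_divergence} is stated as an immediate consequence of Theorem~\ref{thm:divs}, whose content (``the divergences form a graph over the self-looped scalars'') is precisely your observation that the self-looped component of the basis element $\alpha + (-1)^{k(\alpha)-1}\rho(\alpha)$ is $\alpha$ itself, so projecting $\im d_H$ onto $\mathring{\Omega}_0$ is injective. Nothing to add.
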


\begin{corollary} The conditions to be a divergence~$\Ker d_H^*$ are a graph over the dual of the non-self-looped forms in~$\Omega_0$.
\end{corollary}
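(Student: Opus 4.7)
The plan is to derive the splitting $\Omega_0 = \overline{\Omega}_0 \oplus \im d_H$ from the preceding results, then dualize.

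First, I would combine Theorem~\ref{thm:divs} with Corollary~\ref{corollary:self_loop_is_divergence}. The basis $\{\alpha + (-1)^{k(\alpha)-1}\rho(\alpha) : \alpha \in \mathring{\Omega}_0\}$ of $\im d_H$ provided by Theorem~\ref{thm:divs} is triangular with respect to the natural splitting $\Omega_0 = \overline{\Omega}_0 \oplus \mathring{\Omega}_0$: its $\mathring{\Omega}_0$-component is exactly $\alpha$ while $\rho(\alpha) \in \overline{\Omega}_0$. Projecting onto $\mathring{\Omega}_0$ along $\overline{\Omega}_0$ sends this basis to the basis $\mathring{\Omega}_0$ itself, so the projection is an isomorphism. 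Equivalently, $\im d_H \cap \overline{\Omega}_0 = \{0\}$ (which is Corollary~\ref{corollary:self_loop_is_divergence}) and the dimensions match, yielding $\Omega_0 = \overline{\Omega}_0 \oplus \im d_H$.

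Second, I would dualize. The dual basis decomposition $\Omega_0^* = \overline{\Omega}_0^* \oplus \mathring{\Omega}_0^*$ identifies $\mathring{\Omega}_0^*$ with $\overline{\Omega}_0^{\perp}$, the annihilator of $\overline{\Omega}_0$ in $\Omega_0^*$. Since $\Omega_0$ is the internal direct sum of $\overline{\Omega}_0$ and $\im d_H$, the corresponding annihilators are complementary subspaces of $\Omega_0^*$, so $\Omega_0^* = \mathring{\Omega}_0^* \oplus \Ker d_H^*$. Hence the projection $\Omega_0^* \to \overline{\Omega}_0^*$ along $\mathring{\Omega}_0^*$ restricts to an isomorphism on $\Ker d_H^*$, which is exactly the assertion that $\Ker d_H^*$ is a graph over $\overline{\Omega}_0^*$.

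For concreteness, the graphing map $\overline{\Omega}_0^* \to \mathring{\Omega}_0^*$ can be read off the basis: given $\psi \in \overline{\Omega}_0^*$, the unique lift $\widetilde{\psi} \in \Ker d_H^*$ must vanish on each $\alpha + (-1)^{k(\alpha)-1}\rho(\alpha)$, which forces $\widetilde{\psi}(\alpha) = (-1)^{k(\alpha)}\psi(\rho(\alpha))$ for every $\alpha \in \mathring{\Omega}_0$. The argument presents no real obstacle beyond invoking the triangular structure of the basis of $\im d_H$ given by Theorem~\ref{thm:divs}; the rest is standard linear duality applied to a direct sum decomposition.
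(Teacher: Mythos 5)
Your proposal is correct and follows essentially the same route the paper intends: it deduces the statement from Theorem~\ref{thm:divs} (the triangular basis of $\im d_H$ over $\mathring{\Omega}_0$) together with the standard duality $\Ker d_H^*=\mathrm{Ann}(\im d_H)$, which is exactly the ``rows of $y-Ax$'' reasoning the paper uses when it realizes the graph explicitly in the following theorem. Your version merely spells out the intermediate splitting $\Omega_0=\overline{\Omega}_0\oplus\im d_H$ and its dual, and the explicit lift $\widetilde{\psi}(\alpha)=(-1)^{k(\alpha)}\psi(\rho(\alpha))$ matches the paper's map $\pi$.
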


In the following theorem, this graph is realized explicitly.

\begin{theorem}
Let~$\widehat{E}\subset E$ be a set of edges of the scalar aroma~$\beta\in \Omega_0$ and let~$\beta\backslash \widehat{E}$ be~$\beta$ with edges~$\widehat{E}$ replaced by self-loops. Let~$m(\beta,\widehat{E})$ be the number of ways that redirecting self-loops of~$\beta\backslash \widehat{E}$ results in~$\beta$.
Let~$\pi\colon\Omega_{0}\to\Omega_{0}$ be defined by
$$ \pi(\beta)=\sum_{\widehat{E}\subset E}(-1)^{|\widehat{E}|} m(\beta,\widehat{E}) \beta\backslash \widehat{E}$$
Then
$$ \{\pi(\beta), \beta\in\overline{\Omega}_{0}^*\}$$
is a basis of~$\mathrm{Ann}(\im A)$, the conditions that a scalar must satisfy to be a divergence.
\end{theorem}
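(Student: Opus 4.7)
The plan is to apply elementary duality on top of the explicit basis of $\im d_H$ provided by Theorem~\ref{thm:divs}, namely $b_\alpha = \alpha + (-1)^{k(\alpha)-1}\rho(\alpha)$ for $\alpha \in \mathring{\Omega}_{0}$. The preceding theorem gives $\dim \mathrm{Ann}(\im d_H) = |\overline{\Omega}_{0}|$, so it suffices to exhibit $\{\pi(\beta) : \beta \in \overline{\Omega}_{0}\}$ as a linearly independent family contained in the annihilator, under the natural identification $\Omega_{0}^{*} \cong \Omega_{0}$.

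Linear independence is immediate from the structure of $\pi(\beta)$. The term of $\pi(\beta)$ corresponding to $\widehat{E} = \emptyset$ equals $\beta$ with coefficient $m(\beta,\emptyset) = 1$, while every other contribution $\beta\setminus\widehat{E}$ with $\widehat{E} \neq \emptyset$ carries at least one self-loop and so lies in $\mathring{\Omega}_{0}$. Composing $\beta \mapsto \pi(\beta)$ with the canonical projection $\Omega_{0} \to \overline{\Omega}_{0}$ along $\mathring{\Omega}_{0}$ therefore yields the identity on $\overline{\Omega}_{0}$, which forces the $\pi(\beta)$ to be linearly independent. Since the family has cardinality $|\overline{\Omega}_{0}| = \dim \mathrm{Ann}(\im d_H)$, the theorem will follow once we establish the annihilation property.

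For annihilation, I would compute $\langle \pi(\beta), b_\alpha \rangle$ and split the pairing into a contribution from $\alpha$ and one from $\rho(\alpha)$. Since $\rho(\alpha) \in \overline{\Omega}_{0}$ and the only non-self-looped term of $\pi(\beta)$ is $\beta$ itself, the second piece equals $(-1)^{k(\alpha)-1}$ times the coefficient of $\beta$ in $\rho(\alpha)$, i.e., the number of redirections of the self-loops of $\alpha$ producing $\beta$. The first piece is $(-1)^{k(\alpha)}\sum_{\widehat{E} \,:\, \beta\setminus\widehat{E} = \alpha} m(\beta, \widehat{E})$, since $|\widehat{E}| = k(\alpha)$ is forced by the condition $\beta\setminus\widehat{E} = \alpha$. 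The two contributions cancel exactly when
\[
\sum_{\widehat{E}\,:\,\beta\setminus\widehat{E} = \alpha} m(\beta, \widehat{E}) \;=\; [\beta]\,\rho(\alpha).
\]

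The main obstacle is this combinatorial identity, which I would prove by exhibiting a canonical bijection between redirections of the self-loops of $\alpha$ producing $\beta$ and pairs $(\widehat{E}, r)$, where $\widehat{E} \subset E(\beta)$ satisfies $\beta\setminus\widehat{E} = \alpha$ and $r$ is a redirection of the self-loops of $\beta\setminus\widehat{E}$ producing $\beta$. The forward map sends a redirection of $\alpha$ to the set $\widehat{E}$ of newly created edges of $\beta$ together with the induced $r$; the inverse reads any $(\widehat{E}, r)$ back as a redirection of $\alpha$'s self-loops via the tautological identification of the self-loops of $\beta\setminus\widehat{E}$ with those of $\alpha$. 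This bijection is precisely the ``cut and redirect'' duality underlying Theorem~\ref{thm:divs}, so no new combinatorics is required beyond a careful bookkeeping of how $\widehat{E} \subset E(\beta)$ is counted (with $m(\beta, \widehat{E})$ compensating the automorphism redundancies of $(\beta, \widehat{E})$), after which the identity holds on the nose.
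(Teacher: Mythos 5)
Your proposal is correct and takes essentially the same route as the paper: both arguments rest on Theorem~\ref{thm:divs} to realize $\im d_H$ as a graph over the self-looped scalars $\mathring{\Omega}_{0}$, obtain the dimension of the annihilator from the preceding counting theorem, and identify $\pi(\beta)$ with the row of $y-Ax$ indexed by $\beta\in\overline{\Omega}_{0}$. The only difference is one of explicitness — you spell out the linear independence via the $\widehat{E}=\emptyset$ term and isolate the combinatorial identity $\sum_{\widehat{E}\,:\,\beta\backslash\widehat{E}=\alpha} m(\beta,\widehat{E})=[\beta]\rho(\alpha)$ together with its cut-and-redirect bijection, whereas the paper asserts this matching of coefficients directly.
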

\begin{proof}
The construction is directly related to that in Theorem~\ref{thm:divs}. The conditions for~$(x,y)$ to lie on the graph~$\{x,Ax)\colon x\in \mathbb{R}^n\}\subset \R^n\times\R^m$ are~$y-Ax=0$,~$A\in\R^{m\times n}$. A basis for these conditions is given by the rows of~$y-Ax$, where Theorem
\ref{thm:divs} gives the columns of~$A$. That is, for each non-self-looped scalar~$\beta$ we need to determine the coefficient of~$\beta$ in~$\rho(\alpha)$ for each self-looped scalar~$\alpha$. This is the expression for~$\pi$: the term~$\widehat{E}=\emptyset$ gives~$\beta$, and the terms from non-empty sets of edges~$\widehat{E}$ give the~$\alpha$'s that can give rise to~$\beta$.
\end{proof}

Finally, we present a basis of~$\im d_H^*$. It is quite straightforward, as we can find a suitable subspace on which~$d_H^*$ is injective.

\begin{theorem}
The set
$$\{d_H^*\phi^*, \phi\in\mathring{\Omega}_{0}\}$$
is a basis of~$\im d_H^*$, the conditions that a form in~$\Omega_{0}$ must satisfy to be divergence free.
\end{theorem}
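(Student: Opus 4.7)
The plan is to combine three ingredients: the dimension count, a direct inclusion, and linear independence. By the earlier rank theorem, $\dim \im d_H^{*}=|\mathring{\Omega}_{0}|$, which matches the cardinality of the proposed set. The inclusion $d_H^{*}\phi^{*}\in \im d_H^{*}$ is tautological. So everything reduces to showing that the map
\[
\mathring{\Omega}_{0}\longrightarrow \Omega_{1}^{*},\qquad \phi\longmapsto d_H^{*}\phi^{*}
\]
is injective, or equivalently that the restriction of $d_H^{*}$ to $\mathrm{Span}(\mathring{\Omega}_{0})^{*}\subset \Omega_{0}^{*}$ has trivial kernel.

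The key observation is that $\Ker d_H^{*}=\mathrm{Ann}(\im d_H)$, and Theorem \ref{thm:divs} supplies an explicit basis of $\im d_H$, namely
\[
\bigl\{\alpha+(-1)^{k(\alpha)-1}\rho(\alpha):\ \alpha\in \mathring{\Omega}_{0}\bigr\},
\]
whose two summands live in complementary subspaces: $\alpha\in \mathring{\Omega}_{0}$ while $\rho(\alpha)\in \overline{\Omega}_{0}$, since $\rho$ redirects every self-loop. I would then take an arbitrary element $\eta^{*}=\sum_{\phi\in\mathring{\Omega}_{0}} c_\phi\,\phi^{*}$ of $\mathrm{Span}(\mathring{\Omega}_{0})^{*}$ assumed to lie in $\Ker d_H^{*}$ and pair it with each basis vector above. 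The contribution of $\rho(\alpha)$ drops out because $\phi^{*}(\rho(\alpha))=0$ whenever $\phi$ is self-looped and $\rho(\alpha)$ is not, leaving $\eta^{*}\bigl(\alpha+(-1)^{k(\alpha)-1}\rho(\alpha)\bigr)=c_\alpha$. Setting this to zero for every $\alpha\in\mathring{\Omega}_{0}$ forces $\eta^{*}=0$, so the restriction of $d_H^{*}$ is injective.

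Combining injectivity on $\mathrm{Span}(\mathring{\Omega}_{0})^{*}$ with the dimension equality $|\mathring{\Omega}_{0}|=\dim \im d_H^{*}$ shows that $\{d_H^{*}\phi^{*}:\phi\in\mathring{\Omega}_{0}\}$ is a linearly independent family of the right cardinality inside $\im d_H^{*}$, hence a basis. There is no substantive obstacle here: the entire argument is driven by the fact that $\im d_H$ already admits the graph-basis of Theorem \ref{thm:divs}, which neatly splits $\Omega_{0}$ into a self-looped part (indexing the basis) and a non-self-looped part (containing $\rho$). The only point that requires a moment's care is to ensure that one dualizes with respect to the forest basis so that $\phi^{*}$ truly annihilates everything in $\overline{\Omega}_{0}$; once that convention is fixed, the rest is immediate.
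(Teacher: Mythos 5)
Your proof is correct and takes essentially the same route as the paper: both reduce the claim to injectivity of $d_H^*$ on the span of the self-looped duals, and both derive that injectivity from Theorem~\ref{thm:divs} (the divergences form a graph over the self-looped scalars). The paper states this abstractly as ``$\mathrm{pr}\circ d_H$ is surjective, hence its dual is injective,'' whereas you unwind the same fact by pairing a putative kernel element with the explicit graph basis $\alpha+(-1)^{k(\alpha)-1}\rho(\alpha)$; these are dual formulations of one and the same argument.
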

\begin{proof}

For any linear map~$A\colon V\to W$,~$\langle A^*w^*,v\rangle = \langle w^*,A v\rangle = 0$ for all~$w^*\in W^*$ when~$v\in\Ker A$. 
Thus, divergence-free vectors do satisfy the given conditions. Furthermore, the dimension of the set is correct. It remains to show that the set is linearly independent. This is the same as showing that~$d_H^*|_{\mathring{\Omega}_{0}^*}$ is injective.

Let~$\mathrm{pr}\colon\Omega_{0}\to\mathring\Omega_{0}$ be the natural projection to the self-looped scalars. From Theorem~\ref{thm:divs}, the divergences form a graph over the self-looped scalars. That is,~$\mathrm{pr}\circ d_H$ is surjective. Therefore its dual~$d_H^*|_{\mathring{\Omega}_{0}^*}$ is injective.
\end{proof}

\subsection{Integration by parts of aromatic forests}
\label{section:IBP}

The horizontal homotopy operator is often described in the differential geometry literature as an integration by parts operator.
The concept of integration by parts of trees was also introduced in the context of stochastic numerical analysis in~\cite{Laurent20eab,Laurent21ocf} on exotic aromatic B-series (see also~\cite{Bronasco22ebs}). We show in this section that a similar integration by parts process can be adapted in the context of aromatic forms to define a different horizontal homotopy operator on~$\Omega_{0,p}$.

Let~$\gamma\in\Omega_{0,p}^N$ a linear combination of forests, let~$\tau\in \FF_{0,p}^N$ one of these forests and~$v$ a vertex of~$\tau$ on a 1-loop. We denote~$a(\tau)$ the coefficient of~$\tau$ in~$\gamma$, and~$\theta_v(\tau)$ the forest~$\tau$ where we remove the edge linking~$v$ to itself and transform~$v$ into a root.
The alternative horizontal homotopy operator~$\widehat{h}_H$ on~$\Omega_{0,p}$ is given by the following algorithm on~$\Omega_{0,p}^N$, and extended to~$\Omega_{0,p}$ by linearity.
\begin{algorithm}[H]
\renewcommand{\thealgorithm}{Homotopy operator~$\widehat{h}_H$}
\caption{}
\begin{algorithmic}
\STATE Given~$\gamma\in\Omega_{0,p}^N$, initialize~$\widehat{h}_H\gamma=0$ and~$\widehat{\gamma}=\gamma-\abs{\gamma}^{-1}\Eulerop \gamma$.
\WHILE{there is a 1-loop in a forest~$\tau$ of~$\widehat{\gamma}$ on a vertex~$v$}
	\STATE~$\widehat{h}_H\gamma\leftarrow \widehat{h}_H\gamma+a(\tau) \theta_v(\tau)$,
	\STATE~$\widehat{\gamma}\leftarrow \widehat{\gamma} - a(\tau) d_H \theta_v(\tau)$.
\ENDWHILE
\RETURN~$\widehat{h}_H\gamma$
\end{algorithmic}
\end{algorithm}

Note that each iteration in the algorithm reduces the number of 1-loops by one. Thus, the algorithm always terminates.
We emphasize that the result of the algorithm is independent of the order in which we detach the~$1$-loops. This is not the case in the similar algorithm proposed in~\cite{Laurent20eab}, as there is an extra term involved in the integration by parts process.

\begin{theorem}
For~$\gamma\in\Omega_{0,p}$, the output~$\widehat{h}_H\gamma$ of the algorithm is the horizontal homotopy operator, up to a divergence-free term, that is,
$$d_H(h_H\gamma - \widehat{h}_H\gamma)=0.$$
\end{theorem}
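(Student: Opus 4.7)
The plan is to track an invariant of the algorithm, identify its initial value with $d_H h_H\gamma$, and then invoke Corollary~\ref{corollary:self_loop_is_divergence} to conclude.

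First, I would show by induction on the iterations that throughout the algorithm the identity
\[ \widehat{\gamma} + d_H\widehat{h}_H\gamma = \gamma - \tfrac{1}{|\gamma|}\Eulerop\gamma \]
is preserved. It holds after initialization by construction; each loop iteration adds $a(\tau)\theta_v(\tau)$ to $\widehat{h}_H\gamma$ and subtracts the matching $a(\tau)d_H\theta_v(\tau)$ from $\widehat{\gamma}$, leaving the sum unchanged. The algorithm terminates because the total number of $1$-loops sitting on vertices is a nonnegative integer that strictly decreases at each step.

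Next, I would rewrite the right-hand side as $d_H h_H\gamma$. Starting from the decomposition $|\gamma|\gamma = \sum_{q=0}^\infty D^q \Eulerop^q\gamma$ of Proposition~\ref{prop:higher_Euler_operators_Omega} and using the factorization $D^q = \tfrac{1}{q}D \cdot D^{q-1}$ from~\eqref{equation:identity_composition_derivatives},
\[ \gamma - \tfrac{1}{|\gamma|}\Eulerop\gamma = \tfrac{1}{|\gamma|}\sum_{q=1}^\infty D^q\Eulerop^q\gamma = d_H\Bigl(\tfrac{1}{|\gamma|}\sum_{q=1}^\infty\tfrac{1}{q}D^{q-1}\Eulerop^q\gamma\Bigr) = d_H h_H\gamma, \]
recognizing the formula for $h_H$ from Theorem~\ref{theorem:exact_variational_chain}. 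At termination the invariant yields
\[ d_H(h_H\gamma - \widehat{h}_H\gamma) = \widehat{\gamma}_{\text{final}}, \]
which is a divergence with no $1$-loops on vertices.

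Finally, I would apply Corollary~\ref{corollary:self_loop_is_divergence}: no nonzero combination of non-self-looped scalars in $\Omega_0$ lies in $\Img(d_H)$, so $\widehat{\gamma}_{\text{final}} = 0$, giving the claim. The main obstacle is the case $p \geq 1$, for which Corollary~\ref{corollary:self_loop_is_divergence} is not directly available: one would need to extend the basis of divergences given by Theorem~\ref{thm:divs} to $\Omega_{0,p}$, establishing that $\Img(d_H|_{\Omega_{1,p}})$ is a graph over the self-looped forms in $\Omega_{0,p}$. The construction should proceed analogously via higher Euler operators, but is not explicitly carried out in the excerpt.
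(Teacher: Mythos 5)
Your proof is essentially the paper's own argument: the same algorithm invariant $\widehat{\gamma}+d_H\widehat{h}_H\gamma=\gamma-\tfrac{1}{|\gamma|}\Eulerop\gamma$, the same identification of the right-hand side with $d_H h_H\gamma$ via Proposition~\ref{prop:higher_Euler_operators_Omega} and the formula for $h_H$ from Theorem~\ref{theorem:exact_variational_chain}, and the same appeal to Corollary~\ref{corollary:self_loop_is_divergence} to conclude that the terminal $\widehat{\gamma}$, being a $1$-loop-free divergence, vanishes. The caveat you raise about $p\geq 1$ is a fair observation, since Corollary~\ref{corollary:self_loop_is_divergence} is stated only for $\Omega_0$, but the paper's proof invokes it in exactly the same way without further comment, so your argument matches the published one.
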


\begin{proof}
After the algorithm terminates,~$\widehat{\gamma}$ is given by
$$
\widehat{\gamma}=\gamma-\frac{1}{\abs{\gamma}}\Eulerop \gamma - d_H \widehat{h}_H \gamma=d_H(h_H\gamma -\widehat{h}_H \gamma),
$$
where~$\widehat{\gamma}$ does not contain any 1-loop and where we used Theorem~\ref{theorem:exact_variational_chain}.
We deduce from Proposition~\ref{prop:higher_Euler_operators_Omega} that~$\widehat{\gamma}\in \Img(d_H)$. According to Corollary~\ref{corollary:self_loop_is_divergence}, we find~$\widehat{\gamma}=0$.
\end{proof}

We now have two different ways to compute the horizontal homotopy operator on~$\Omega_{0,p}$. The first one, presented in Subsection~\ref{section:Euler_operators}, uses the Euler operators. The second one, in the spirit of~\cite{Laurent20eab}, is based on the repeated use of detaching and grafting operations on specific nodes.
We emphasize that the expressions of the homotopy operator given by these two methods are different in general, but are always equal up to a divergence-free term.
The two homotopy operators can produce both concise and tedious outputs, and they outperform each other in this manner on different forests. We refer the reader to Table~\ref{table:comparison_homotopy} for some examples. This difference in the number of terms increases rapidly with the order; for instance, for the form~$\gamma=\atree1001\atree1001\atree3101$,~$h_H \gamma$ has 6 terms, while~$\widehat{h}_H \gamma$ has 26 terms. On the other hand, it is possible to find examples where~$\widehat{h}_H~$ produces less terms than~$h_H$.
It would be interesting to find a homotopy operator with a minimal number of terms in the output, or a procedure to simplify the outputs of a homotopy operator in the spirit of \cite[Sect.\ts IV.B]{Anderson89tvb}.
Moreover, it is not known whether a similar approach in the divergence-free context could yield a different homotopy operator. This is matter for future work.

\begin{table}[!htb]
	\setcellgapes{3pt}
	\centering
	\begin{tabular}{|c|c|c|}
	\hline
	$\gamma\in\Omega_0$ &~$h_H\gamma$ &~$\widehat{h}_H\gamma$ \\
	\hhline{|=|=|=|}
	$\atree1001$ &~$\atree1101$ &~$\atree1101$ \\
	\hhline{|=|=|=|}
	$\atree2001$ &~$0$ &~$0$ \\
	\hline
	$\atree2002$ &~$\atree2101$ &~$\atree2101$ \\
	\hline
	$\atree1001\atree1001$ &~$\atree1001\atree1101$ &~$\atree1001\atree1101$ \\
	\hhline{|=|=|=|}
	$\atree3001$ &~$\frac{1}{6}\atree2001\atree1101+\frac{1}{6}\atree1001\atree2101-\frac{1}{6}\atree3101-\frac{1}{6}\atree2002\atree1101~$ &~$\frac{1}{3}\atree1001\atree2101- \frac{1}{3}\atree2002\atree1101$ \\
	\hline
	$\atree3002$ &~$\frac{1}{6}\atree3101+\frac{1}{6}\atree2002\atree1101-\frac{1}{6}\atree2001\atree1101-\frac{1}{6}\atree1001\atree2101$ &~$\frac{1}{3}\atree2002\atree1101- \frac{1}{3}\atree1001\atree2101$ \\
	\hline
	$\atree3003$ &~$\atree3102$ &~$\atree3102$ \\
	\hline
	$\atree3004$ &~$\frac{2}{3}\atree2001\atree1101 +\frac{1}{3}\atree3101$ &~$\atree3101+\frac{2}{3}\atree1001\atree2101-\frac{2}{3}\atree2002\atree1101$ \\
	\hline
	$\atree2001 \atree1001$ &~$0$ &~$0$ \\
	\hline
	$\atree2002 \atree1001$ &~$\frac{1}{3}\atree2002\atree1101+\frac{2}{3}\atree1001\atree2101$ &~$\atree2002\atree1101+\frac{2}{3}\atree2001\atree1101-\frac{2}{3}\atree3101~$ \\
	\hline
	$\atree1001 \atree1001 \atree1001$ &~$\atree1001 \atree1001 \atree1101$ &~$\atree1001 \atree1001 \atree1101$ \\
	\hline
	\end{tabular}
	\caption{Comparison of the horizontal homotopy operators on~$\Omega_0$ for the first orders.}
	\label{table:comparison_homotopy}
	\setcellgapes{1pt}
\end{table}


\subsection{Explicit description of volume-preserving aromatic integrators}
\label{section:application_vp_integrators}

It is known that the only volume-preserving consistent B-series method is the exact flow~\cite{Chartier07pfi,Iserles07bsm}.
In~\cite{MuntheKaas16abs}, the question of the existence of a volume-preserving aromatic method is raised, where an aromatic method is a one-step integrator that has an expansion as an aromatic B-series.
In~\cite{Bogfjellmo19aso}, a methodology to create pseudo-volume-preserving interators is proposed, by substituting in a standard Runge-Kutta method the vector field~$f$ with an aromatic B-series.
We give in this subsection an explicit expression of the form of a volume-preserving aromatic method, and we use it to prove that there does not exist any aromatic Runge-Kutta integrator and to discuss the form of a volume-preserving aromatic B-series method.

Consider a consistent one-step integrator~\eqref{equation:one-step_integrator} for solving the differential equation~\eqref{equation:ODE} with the assumption~$\Div(f)=0$.
We assume the integrator~\eqref{equation:one-step_integrator} has an expansion in aromatic B-series given by the linear coefficient map~$a\colon \widetilde{\Omega}_1 \rightarrow \R$; that is, its (formal) Taylor expansion has the form
$$
\Phi(y,h)=y+F(B(a))(hf),
\quad
B(a)=\sum_{\tau\in\FF_1} \frac{a(\tau)}{\sigma(\tau)}\tau,$$
where~$\sigma(\tau)$ is the cardinal of the set of automorphisms on the set of nodes~$V$ of~$\tau$ that leave~$\tau$ unchanged (see~\cite{Bogfjellmo19aso}).
We call such a method an aromatic B-series method. If in addition the integrator reduces to a standard Runge-Kutta method when choosing a vector field~$f$ that satisfies~$F(\gamma)(f)=0$ for all~$\gamma\in \FF_1\setminus \TT$, we call the integrator an aromatic Runge-Kutta method.

An aromatic B-series method can be seen as the exact solution of the modified ODE~\eqref{equation:modified_ODE}, and the modified flow is given by the aromatic B-series~$B(b)$ satisfying~$B(a)=B(b)\triangleright B(e)$. The operation~$\triangleright$ is the substitution of B-series  and~$b\colon \widetilde{\Omega}_1\rightarrow \R$ is the coefficient map of the modified flow.
It is known that~$a$ and~$b$ satisfy~$b \star e=a$, where~$\star$ is the substitution of B-series coefficients~\cite{Calaque11tih,Chartier10aso,Bogfjellmo19aso}.
The map~$e$ is the coefficient of the exact flow of~\eqref{equation:ODE}. Its expression is given for instance in \cite[Chap.\ts III]{Hairer06gni} for Butcher trees. It is extended to the aromatic trees by~$e(\gamma)=0$ if~$\gamma$ is composed of at least an aroma.
The question raised in~\cite{MuntheKaas16abs} is the following: can we find an aromatic B-series method such that
$d_H B(b)=0$, that is, such that the modified B-series~$B(b)$ is solenoidal.
Note that choosing~$a=e$ yields a simple solution to the problem, but there does not exist any reasonable numerical method whose coefficient map is given by the exact flow coefficient~$e$.

The main motivation for considering aromatic B-series instead of standard B-series comes from the following negative result.
\begin{theorem}[\cite{Chartier07pfi,Iserles07bsm}]
\label{theorem:Iserles_theorem}
The solenoidal combinations of rooted trees satisfy
$$\Span(\TT)\cap \Psi=\emptyset, \quad \Span(\TT)\cap \widetilde{\Psi}=\Span(\atree1101).$$
In particular, the only volume-preserving consistent B-series method is the exact flow.
\end{theorem}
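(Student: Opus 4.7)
The key construction is a decomposition of the divergence of a Butcher tree into its self-looped and non-self-looped parts. For $\tau \in \TT$ with root $r$, write
\begin{equation*}
d_H \tau = \varphi(\tau) + R(\tau),
\end{equation*}
where $\varphi(\tau) = D^{r\to r}\tau \in \mathring{\Omega}_{0}$ is the aroma obtained by grafting the root to itself (producing the unique 1-loop), and $R(\tau) = \sum_{u \in V \setminus \{r\}} D^{r\to u}\tau \in \overline{\Omega}_{0}$ collects the remaining terms. Since $\tau$ has no pre-existing 1-loop, grafting $r$ to any $u \neq r$ creates a cycle of length $\geq 2$ with no new self-loops, so this decomposition respects the direct sum $\Omega_{0}^N = \mathring{\Omega}_{0}^N \oplus \overline{\Omega}_{0}^N$ for $N \geq 1$.

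For the first identity, suppose $\gamma = \sum_{\tau} a_{\tau}\tau \in \Span(\TT)$ satisfies $d_H\gamma = 0$ in $\Omega_0$. Projecting onto $\mathring{\Omega}_{0}$ yields $\sum a_{\tau}\varphi(\tau) = 0$. The map $\varphi\colon \TT \to \mathring{\Omega}_{0}$ is injective: in $\varphi(\tau)$ the unique 1-loop vertex is canonically determined, and deleting its self-loop recovers $\tau$. Hence $\{\varphi(\tau)\}_{\tau \in \TT}$ is linearly independent, forcing $a_\tau = 0$, so $\Span(\TT) \cap \Psi = \emptyset$.

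For the second identity, the quotient $\widetilde{\Omega}_{0} = \Omega_0/\AA_0$ kills every self-looped aroma, so the condition $d_H\gamma = 0$ in $\widetilde{\Omega}_{0}$ becomes $\sum a_{\tau} R(\tau) = 0$ in $\overline{\Omega}_{0}$. Observe that $R(\atree1101) = 0$ (empty sum), which already accounts for $\atree1101 \in \widetilde{\Psi}$. It remains to show that $\{R(\tau) : \tau \in \TT,\, |V_\tau| \geq 2\}$ is linearly independent. I would do this by induction on the height $h(\tau)$, peeling off trees from tallest to shortest: for each height $h$ appearing in the support, exhibit a canonical aroma in $R(\tau)$ that cannot appear in $R(\tau')$ for any other $\tau'$. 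The base case is the linear tree $L_N$ of order $N$, whose $R$-sum contains the pure $N$-cycle $C_N$; since removing any edge of $C_N$ returns $L_N$, the aroma $C_N$ forces $a_{L_N} = 0$, and analogous "distinguished cycle + tail" aromas handle more branched trees of smaller height in turn.

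The B-series consequence is then immediate: a consistent B-series integrator has modified vector field $B(b) \in \Span(\TT)$, and volume-preservation under the standing assumption $\Div(f) = 0$ translates to $d_H B(b) = 0$ in $\widetilde{\Omega}_{1}$. Hence $B(b) \in \Span(\atree1101)$, and consistency fixes $b(\atree1101) = 1$, giving $B(b) = \atree1101$, i.e., $\widetilde f = f$, so the integrator is the exact flow. The main obstacle is the linear independence of $\{R(\tau)\}_{|V_\tau|\geq 2}$ in the divergence-free case: the clean projection trick available in the standard setting disappears once we quotient by $\AA_0$, so one must run the combinatorial peeling argument on non-self-looped aromas of maximal cycle length explicitly.
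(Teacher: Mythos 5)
Your treatment of the first identity is correct and is essentially the paper's own argument: the paper deduces $\Span(\TT)\cap \Psi=\emptyset$ from Theorem~\ref{thm:divs} by projecting $d_H(\sum_i c_i t_i)$ onto the self-looped scalars $\mathring{\Omega}_{0}$, which gives $\sum_i c_i\varphi(t_i)$ with the $\varphi(t_i)$ pairwise distinct basis aromas. Your decomposition $d_H\tau=\varphi(\tau)+R(\tau)$ with $\varphi(\tau)=D^{r\to r}\tau$ and the injectivity of $\varphi$ on $\TT$ reproduce exactly this. The final deduction about B-series methods also matches the standard reasoning.

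The second identity is precisely the part the paper does \emph{not} reprove (it is taken from the cited references), and it is where your proposal has a genuine gap. Your reduction is the right one: modulo the forests with a $1$-loop, the condition becomes $\sum_\tau a_\tau R(\tau)=0$ in $\overline{\Omega}_{0}$, and since $R(\atree1101)=0$ one must show that $\{R(\tau):\tau\in\TT,\ \abs{\tau}\geq 2\}$ is linearly independent. However, the induction you sketch --- process trees by decreasing height and use the deepest-leaf graft $D^{r\to u}\tau$ as a distinguished witness --- already fails at order $4$. The two trees $[[\bullet,\bullet]]$ and $[[\bullet],\bullet]$ both have height $2$, and for both of them the deepest-leaf graft yields the \emph{same} aroma, namely the directed $3$-cycle with a single pendant vertex (the directed $3$-cycle is vertex-transitive, so it does not matter which cycle node carries the pendant); this aroma moreover also appears in $R$ of the bamboo $[[[\bullet]]]$. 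Hence at the height-$2$ stage there is no ``distinguished cycle plus tail'' separating these two trees, and the actual independence at order $4$ must be read off from the $2$-cycle aromas (two pendants on the same cycle node versus one pendant on each node), i.e.\ from grafts onto non-deepest nodes. A complete proof needs a finer device than the pair (height, deepest-leaf aroma) --- for instance a total order on $\TT$ together with an injection from trees to non-self-looped aromas making the coefficient matrix triangular, which is in effect what the cited works construct. You flag this obstacle yourself, but as written the peeling argument does not close, so the divergence-free half of the statement remains unproved in your proposal.
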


Note that in the standard context, Theorem~\ref{theorem:Iserles_theorem} is a consequence of Theorem~\ref{thm:divs}.
Indeed, let~$v=\sum_i c_i t_i$,~$t_i\in\mathcal{T}$, be a combination of rooted trees. As the divergences are graphs over the self-looped scalars (see Theorem~\ref{thm:divs}),~$d_H v=0$ if the coefficient of each self-looped scalar in~$d_H v$ is zero. But~$\mathrm{pr} d_H v=\sum_i c_i \theta(t_i)$, where~$\mathrm{pr}\colon\Omega_{0}\to\mathring\Omega_{0}$ is the natural projection to the self-looped scalars, and the self-looped scalars~$\theta(t_i)$ are linearly independent.
%
%

Following Theorem~\ref{theorem:Iserles_theorem}, one is interested in finding a class of non-trivial volume-preserving consistent aromatic B-series methods.
We deduce from the previous discussion and Theorems \ref{theorem:exact_aro_var_bi}, \ref{theorem:exact_div_free_complex}, and \ref{theorem:Kernel_divergence_free} the following explicit description of the coefficients of an aromatic volume-preserving integrator.
\begin{theorem}
If~$B(a)$ is the aromatic B-series of a consistent volume-preserving integrator, then there exists~$\eta\in \widetilde{\Omega}_2$ such that the modified flow is a B-series of the form~$B(\atree1101+ d_H \eta)$ and~$B(a)$ is given by the substitution
$$B(a)=(\atree1101+ d\eta)\triangleright B(e).$$
More precisely, there exists a coefficient map~$\alpha\colon \widetilde{\Omega}_2\rightarrow \R$ such that~$a$ is given by
\begin{equation}
\label{equation:explicit_vp_B-series}
a=(\atree1101^*+\alpha d_H^* A_\sigma)\star e,\quad \text{where} \quad A_\sigma\gamma=\frac{1}{\sigma(\gamma)}\gamma.
\end{equation}
\end{theorem}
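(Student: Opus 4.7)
The plan is to combine the standard backward-error analysis of aromatic B-series with the exactness of the divergence-free aromatic bicomplex (Theorem~\ref{theorem:exact_div_free_complex}) and then dualize. Any consistent aromatic B-series method $\Phi(y,h) = y + F(B(a))(hf)$ is formally the exact flow of a modified ODE whose vector field is itself an aromatic B-series $B(b)$, with coefficient maps related by $a = b \star e$ (see~\cite{Calaque11tih,Chartier10aso,Bogfjellmo19aso}). Consistency forces $b(\atree1101) = 1$, so the order-one part of $B(b)$ is $\atree1101$. Under the hypothesis $\Div(f) = 0$, volume preservation of $\Phi$ is equivalent to $\Div(\widetilde{f}) = 0$, which in the aromatic language is precisely $d_H B(b) = 0$ in $\widetilde{\Omega}_0$; that is, $B(b) \in \widetilde{\Psi}$ is solenoidal.

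Next, decompose $B(b) = \sum_{N \geq 1} B(b)^{(N)}$ by total node count. The order-one piece is $\atree1101$ by consistency. For each $N \geq 2$, Theorem~\ref{theorem:exact_div_free_complex} yields $\Ker(d_H|_{\widetilde{\Omega}_1^N}) = \Img(d_H|_{\widetilde{\Omega}_2^N})$ (recall that $d_H$ preserves the node count, since grafting a root onto another vertex neither creates nor destroys nodes), so there exists $\eta_N \in \widetilde{\Omega}_2^N$ with $B(b)^{(N)} = d_H \eta_N$. Setting $\eta = \sum_{N \geq 2} \eta_N \in \widetilde{\Omega}_2$ gives the factorization $B(b) = \atree1101 + d_H \eta$, and substitution into $B(a) = B(b) \triangleright B(e)$ yields the first claim. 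The coefficient description then follows by dualization: writing $\eta = \sum_\gamma \alpha(\gamma)\, A_\sigma \gamma$ defines a linear map $\alpha \colon \widetilde{\Omega}_2 \to \R$, and the contribution of $d_H \eta$ to the coefficient of each tree $\tau$ in the expansion $B(b) = \sum_\tau (b(\tau)/\sigma(\tau))\tau$ is exactly $(\alpha d_H^* A_\sigma)(\tau)$, using the dual operator $d_H^*$ introduced in Subsection~\ref{section:description_image_kernel_dH}. Combined with the order-one contribution $\atree1101^*$, this gives $b = \atree1101^* + \alpha d_H^* A_\sigma$, and plugging into $a = b \star e$ produces~\eqref{equation:explicit_vp_B-series}.

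The main difficulty is purely notational rather than conceptual: one must keep careful track of the symmetry factors $\sigma(\gamma)$ and $\sigma(\tau)$ when translating between the primal B-series $B(b) = \sum_\tau (b(\tau)/\sigma(\tau))\tau$ and its dual coefficient vector $b \in \widetilde{\Omega}_1^*$, which is precisely the role of the scaling operator $A_\sigma$ in the stated formula. Beyond this bookkeeping, the argument reduces to a direct application of Theorem~\ref{theorem:exact_div_free_complex} together with the standard substitution calculus of aromatic B-series, so no further creativity is required.
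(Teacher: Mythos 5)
Your proposal is correct and follows essentially the same route as the paper: the paper likewise deduces this theorem directly from the backward-error-analysis relation $a = b\star e$, the equivalence of volume preservation with $d_H B(b)=0$, and the exactness of the divergence-free aromatic bicomplex (Theorems~\ref{theorem:exact_aro_var_bi}, \ref{theorem:exact_div_free_complex}, and~\ref{theorem:Kernel_divergence_free}), with the order-by-order application of exactness and the $A_\sigma$ bookkeeping being exactly the intended (and in the paper unwritten) details you have filled in.
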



For the first orders, the B-series of a volume-preserving aromatic B-series method has the form:
\begin{align*}
B(a)&=\atree1101
+\frac{1}{2}\atree2101
+\frac{1}{6}\atree3102+\Big(\frac{1}{6}-\frac{1}{2}\alpha(\wedge\atree1101 \atree2101)\Big)\atree3101+\frac{1}{2}\alpha(\wedge\atree1101 \atree2101)\atree2002 \atree1101\\
&+
\frac{1}{24}\atree4102
+\Big(\frac{1}{8}-\frac{1}{2}\alpha(\wedge\atree1101 \atree3102)-\alpha(\wedge\atree1101 \atree3101)-\frac{1}{2}\alpha(\wedge\atree1101 \atree2101)\Big)\atree4104\\&
+\Big(\frac{1}{24}-\frac{1}{2}\alpha(\wedge\atree1101 \atree3102)+\frac{1}{2}\alpha(\wedge\atree1101 \atree3101)-\frac{1}{4}\alpha(\wedge\atree1101 \atree2101)\Big)\atree4103
+\frac{1}{2}\alpha(\wedge\atree1101 \atree3102)\atree3003 \atree1101
+\frac{1}{2}\alpha(\wedge\atree1101 \atree2101)\atree2002 \atree2101\\&
+\Big(\frac{1}{24}-\frac{1}{2}\alpha(\wedge\atree1101 \atree3101)-\frac{1}{4}\alpha(\wedge\atree1101 \atree2101)\Big)\atree4101
+\Big(\frac{1}{2}\alpha(\wedge\atree1101 \atree3102)+\alpha(\wedge\atree1101 \atree3101)+\frac{1}{2}\alpha(\wedge\atree1101 \atree2101)\Big)\atree3002 \atree1101
+\dots
\end{align*}
Note that the coefficients of the bamboo trees (or tall trees)~$\BB\TT=\{\atree1101,\atree2101,\atree3102,\dots\}$ coincide with the ones of the exact flow. This fact has been noticed for standard B-series in particular in~\cite{Kang95vpa} (see also \cite[Lemma IV.3.2]{Hairer06gni}).
We deduce from this observation the following result.
\begin{theorem}
\label{theorem:no_aromatic_RK}
An aromatic Runge-Kutta method cannot be volume-preserving.
\end{theorem}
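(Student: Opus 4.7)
The strategy is to combine the observation preceding the theorem---that a volume-preserving aromatic B-series method shares its coefficients on every bamboo tree $\tau_k \in \BB\TT$ with those of the exact flow---with the classical elementary weight formula for Runge-Kutta methods applied to the bamboo trees, then derive a contradiction using Cayley--Hamilton.

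First, I would make the reduction explicit. Let $\Phi$ be an $s$-stage aromatic Runge-Kutta method with coefficients $A \in \R^{s \times s}$ and $b \in \R^s$. Choosing a vector field $f$ for which $F(\gamma)(f)=0$ for every $\gamma \in \FF_1 \setminus \TT$ makes the aromatic B-series collapse to the standard B-series of the reduced Runge-Kutta method. Since each bamboo tree $\tau_k$ lies in $\TT$, comparing coefficients shows $a(\tau_k) = k!\, b^{T} A^{k-1}\mathbf{1}$ (with the $1/\sigma$ normalisation used in the definition of $B(a)$).

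Second, I would apply the explicit description~\eqref{equation:explicit_vp_B-series}, or equivalently the expansion displayed just before the theorem, to conclude that if $\Phi$ is volume-preserving then $a(\tau_k) = e(\tau_k) = 1$ for $k=1$, $1/2$ for $k=2$, $1/6$ for $k=3$, and in general $a(\tau_k) = 1$ (since each $\tau_k$ has $\sigma(\tau_k)=1$ and $\gamma(\tau_k)=k!$, and the bamboo coefficients in $B(e)$ and in $B(a)$ coincide). This forces, for every $k \geq 1$,
\begin{equation*}
b^{T} A^{k-1}\mathbf{1} \;=\; \frac{1}{k!}.
\end{equation*}

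Third, I would obtain a contradiction from the fact that any sequence $u_k := b^{T} A^{k-1}\mathbf{1}$ produced by a fixed matrix $A \in \R^{s\times s}$ satisfies a linear recurrence of order at most $s$ with constant coefficients (Cayley--Hamilton). Hence $u_k = \sum_{j} p_j(k)\,\lambda_j^{k}$ where the $\lambda_j$ are the eigenvalues of $A$ and the $p_j$ are polynomials. Such a sequence either vanishes identically from some index on (when $A$ is nilpotent and $b^{T}A^{s}\mathbf{1}=0$) or has $|u_{k+1}/u_k|$ bounded below by a positive constant along some infinite subsequence. Neither behaviour is compatible with $u_k = 1/k!$, for which $u_{k+1}/u_k = 1/(k+1) \to 0$ while no $u_k$ vanishes.

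The main obstacle is the first step: one must carefully unpack the paper's notion of ``aromatic Runge--Kutta method'' to verify that, although the method's B-series may involve arbitrary aromatic trees, its coefficient on each aroma-free bamboo tree $\tau_k$ really is given by the classical scalar $b^{T}A^{k-1}\mathbf{1}$ of the reduced Runge--Kutta method. Once this identification is secured, the remainder is routine linear algebra, and the impossibility of matching the super-exponentially decaying sequence $1/k!$ with a finite-order linear recurrence yields the theorem.
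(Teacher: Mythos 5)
Your proposal follows essentially the same route as the paper: volume-preservation forces the coefficients of all bamboo trees to agree with those of the exact flow (because no solenoidal form of order greater than one contains a bamboo tree), and a Runge--Kutta method cannot achieve this. The two halves are weighted differently, though: the paper's proof spends its effort justifying the bamboo-coefficient rigidity (showing $d_H^*$ vanishes on $\BB\TT$), which you instead import from the ``Note'' preceding the theorem, while you supply a detailed Cayley--Hamilton/linear-recurrence argument for the sub-claim that $b^TA^{k-1}\mathbf{1}=1/k!$ cannot hold for all $k$ --- a classical fact (the stability function is rational and cannot equal the exponential) that the paper simply asserts. Both versions are correct; yours is the more self-contained on the Runge--Kutta side, and, like the paper, it leaves the reduction from aromatic to standard elementary weights at the level of ``elementary differentials of distinct forests are independent.''
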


\begin{proof}
The only bamboo tree that appears in solenoidal forms is~$\atree1101$.
Indeed~$d_H^*$ vanishes on~$\BB\TT$ as the only solenoidal forms where a bamboo tree can appear are of the form~$d_H\wedge \tau_1 \tau_2$ where~$\tau_1$,~$\tau_2\in \BB\TT$, and no bamboo tree appears in these forms.
According to the identity~\eqref{equation:explicit_vp_B-series}, the B-series of an aromatic volume-preserving method has to coincide with the exact flow~$B(e)$ on the bamboo trees~$\BB\TT$.
A Runge-Kutta integrator cannot be exact on all bamboo trees. As the aromatic forests represent different elementary differentials (see~\cite{Iserles07bsm}), any aromatic integrator that reduces to a standard Runge-Kutta integrator when sending the aromas to zero cannot be volume-preserving.
\end{proof}

The methodology proposed in \cite[Sect.\ts 7]{Bogfjellmo19aso} to obtain volume-preservation of high order and the approach in \cite[Sect.\ts 9]{MuntheKaas16abs} give classes of aromatic integrators that can preserve volume up to a high-order, but that cannot be volume-preserving.
To build an aromatic volume-preserving method, it is fundamental to start with an ansatz that is exact on bamboo trees (that is, the method is exact for linear problems). A natural guess is to consider aromatic integrators that reduce to exponential Rosenbrock integrators (see, for instance,~\cite{Berland05bsa,Hochbruck05eer,Luan13ebs}) when sending the aromas to zero.
This calls for future works that study the substitution law and the variational bicomplex directly on aromatic exponential B-series, in order to find a volume-preserving aromatic B-series method.

\section{Conclusion and future work}

In this work, we introduced a new algebraic object, called the aromatic bicomplex, for the study of aromatic forms. We studied the exactness of the bicomplex in the standard case and in the divergence-free case. To this end, we introduced the Euler operators, the homotopy operators, as well as an augmented bicomplex.
The algebraic properties we proved have concrete consequences on the numerical analysis of volume-preserving integrators. They allow to describe completely the solenoidal forms and the B-series of an aromatic volume-preserving method. In particular, we proved that there are no volume-preserving aromatic Runge-Kutta methods.

Many theoretical and applied questions arise from the present work.
Following the results of Subsection~\ref{section:application_vp_integrators}, it would be interesting to rewrite the substitution and divergence operations in the context of exponential B-series, in order to find an aromatic exponential volume-preserving method.

The integration by parts of (exotic) aromatic forests is a new operation that has applications in stochastic numerical analysis and in the study of volume-preserving integrators. To the best of our knowledge, few works study the structure of (exotic) aromatic forests equipped with the integration by parts process. In particular, there is no explicit expression for the output of the integration by parts process in the stochastic setting~\cite{Laurent20eab}. An exact formula would greatly benefit the creation of high-order methods for solving ergodic SDEs.

There is a considerable literature on the variational bicomplex and the De Rham cohomology (see~\cite{Anderson89tvb} and references therein). It would be interesting to generalise some of the existing results in the context of aromatic forms.
For instance, one could try to find simpler expressions for the homotopy operators (see Subsection~\ref{section:IBP}), to find an augmented bicomplex in the divergence-free case (see Subsection~\ref{section:augmented_bicomplex}).
Two major applications of the variational bicomplex are the Noether's theorems and the study of the Laplace-De Rham operator
$\Delta=d_H d_H^* +d_H^* d_H.$
It would be interesting to see how these results translate to aromatic forms.

%
%
%
%
%
%
%
%
%
%
%
%

\bigskip

\noindent \textbf{Acknowledgements.}\
The work of Adrien Laurent was supported by the Research Council of Norway through project 302831 “Computational Dynamics and Stochastics on Manifolds” (CODYSMA).
Robert McLachlan acknowledges the support of the Simons Foundation and the hospitality of the Isaac Newton Institute for the Mathematical Sciences through their program ``Geometry, compatibility and structure preservation in computational differential equations'', where part of this work was conducted.

\bibliographystyle{abbrv}
\bibliography{Ma_Bibliographie}

\vskip-1ex
\begin{appendices}

\section{First solenoidal forms}
\label{section:solenoidal_forms}

We write the generators of the solenoidal forms~$\widetilde{\Psi}^N$ in the divergence-free case for the first orders in Table \ref{figure:first_generators}. As a consequence of Theorem~\ref{theorem:Kernel_divergence_free}, we find all the generators by computing~$d_H\gamma$ for~$\gamma\in \widetilde{\FF}_2^N$, and by adding the trivial tree~$\atree1101$. For $N\leq 5$, we observe that they form a basis of the solenoidal forms (see Remark~\ref{remark:basis_div_free_solenoidal}).
Note how no bamboo trees appear in the solenoidal forms, as discussed in Subsection~\ref{section:application_vp_integrators}.

\begin{table}[!htb]
	\setcellgapes{3pt}
	\centering
	\begin{tabular}{|c|c|l|}
	\hline
	$N$ &~$\gamma\in \widetilde{\FF}_2^N$ & \multicolumn{1}{c|}{Solenoidal form~$2d_H\wedge\gamma$} \\
	\hhline{|=|=|=|}
	$1$ &  &~$\atree1101$ \\
	\hline
	$3$ &~$\atree1101 \atree2101$ &~$\atree2002 \atree1101-\atree3101$ \\
	\hline
	$4$ &~$\atree1101 \atree3102$ &~$\atree3002 \atree1101+\atree3003 \atree1101-\atree4103-\atree4104$ \\
	\cline{2-3}
	 &~$\atree1101 \atree3101$ &~$2\atree3002 \atree1101+\atree4103-2\atree4104-\atree4101$\\
	\hline
	$5$ &~$\atree1101 \atree4102$ &~$\atree4002 \atree1101 +\atree4003 \atree1101 +\atree4004 \atree1101 -\atree5102-\atree5104-\atree5105$ \\
	\cline{2-3}
	 &~$\atree1101 \atree4103$ &~$\atree4005 \atree1101+2\atree4003 \atree1101+\atree5102-2\atree5104-\atree5103-\atree5106$\\
	\cline{2-3}
	 &~$\atree1101 \atree4104$ &~$\atree4002 \atree1101+\atree4006 \atree1101+\atree4003 \atree1101+\atree5104-\atree5105-\atree5106-\atree5107-\atree5108~$\\
	\cline{2-3}
	 &~$\atree1101 \atree4101$ &~$3\atree4005 \atree1101 +\atree5103-3\atree5108-\atree5101$\\
	\cline{2-3}
	 &~$\atree2101 \atree3102$ &~$\atree3002 \atree2101+\atree3003 \atree2101+\atree5105-\atree2002 \atree3102-\atree5104-\atree5107$\\
	\cline{2-3}
	 &~$\atree2101 \atree3101$ &~$2\atree3002 \atree2101+\atree5102+\atree5106-\atree2002 \atree3101-2\atree5105-\atree5108$\\
	\cline{2-3}
	 &~$\atree2002 \atree1101 \atree2101$ &~$\atree2002 \atree2002 \atree1101+2 \atree4002 \atree1101-\atree2002 \atree3101 -2 \atree3002 \atree2101$\\
	\hline
	\end{tabular}
	\caption{Generators of the solenoidal forms~$\widetilde{\Psi}^N$ for the first orders~$N$.}
	\label{figure:first_generators}
	\setcellgapes{1pt}
\end{table}

\section{The aromatic bicomplex for the first orders}
\label{section:examples_aromatic_bicomplex}

We present in Figures \ref{figure:ex_bicomplex_1_2} and \ref{figure:ex_bicomplex_3} the augmented aromatic bicomplex for~$N=1$,~$2$,~$3$ in the standard case, as defined in Subsection~\ref{section:intro_bicomplex}. The divergence-free aromatic bicomplex is deduced from it by deleting the~$1$-loops, and the extra column on the right.
We give a basis of each space, and we omit for conciseness the trivial spaces surrounding the bicomplex, and the wedge~$\wedge$ when writing the aromatic forms in the diagrams.
Note that the alternate sum of dimensions in each horizontal and vertical sequence, and in the Euler-Lagrange complex adds up to zero, as a consequence of Theorem~\ref{theorem:exact_aro_var_bi} and Theorem~\ref{theorem:exact_generalized_bicomplex}.

\begin{figure}[!ht]
$$\begin{tikzcd}
\atree1111 \arrow{r}{d_H} & \atree1011 \arrow{r}{I} & 0\\
\atree1101 \arrow{r}{d_H} \arrow{u}{d_V} & \atree1001 \arrow{u}{d_V} \arrow{ru}{\delta_V} & 
\end{tikzcd}
\qquad
\begin{tikzcd}
\atree1111 \atree1121 \arrow{r}{d_H} & \atree2123,\atree1011 \atree1121 \arrow{r}{d_H} & \atree2021 \arrow{r}{I} & 0 \\
\atree1101 \atree1111 \arrow{r}{d_H} \arrow{u}{d_V} & \atree2111,\atree2112,\atree1001 \atree1111,\atree1011 \atree1101 \arrow{r}{d_H} \arrow{u}{d_V} & \atree2012,\atree2013,\atree2011,\atree1001 \atree1011 \arrow{r}{I} \arrow{u}{d_V} & \atree2013 \arrow{u}{\delta_V} \\
0 \arrow{r}{d_H} \arrow{u}{d_V} & \atree2101,\atree1001 \atree1101 \arrow{r}{d_H} \arrow{u}{d_V} & \atree2001,\atree2002,\atree1001\atree1001 \arrow{u}{d_V} \arrow{ru}{\delta_V} & 
\end{tikzcd}$$
\caption{The augmented aromatic bicomplex for~$N=1$ and~$N=2$. The wedges are omitted for conciseness.}
\label{figure:ex_bicomplex_1_2}
\end{figure}
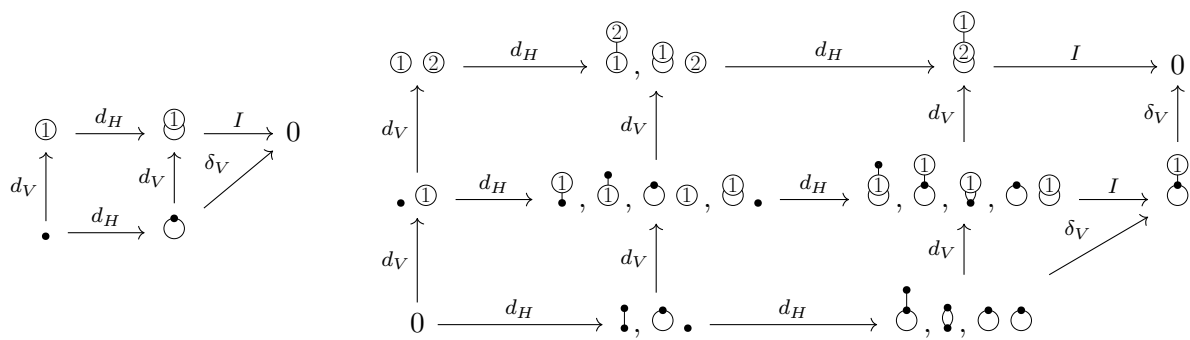


\begin{figure}[!ht]
\begin{center}
\rotatebox{90}{
$$\begin{tikzcd}[ampersand replacement=\&]
\atree1111 \atree1121 \atree1131 \arrow{r}{d_H} \& \atree1111 \atree2131,\atree1011 \atree1121 \atree1131 \arrow{r}{d_H} \& \atree3131,\atree1011 \atree2131,\atree2021 \atree1131 \arrow{r}{d_H} \& \atree3032,\atree3033,\atree3031,\atree2021 \atree1031 \arrow{r}{I} \& 
\begin{array}{l}
\atree3032+\atree3033,2\atree3032\\+\atree2021 \atree1031
\end{array} \\
\atree1101 \atree1111 \atree1121 \arrow{r}{d_H} \arrow{u}{d_V} \& 
\begin{array}{l}
\atree1101 \atree2123,\atree1111 \atree2121,\atree1111 \atree2124,\\\atree1001 \atree1111 \atree1121,\atree1011 \atree1101 \atree1121
\end{array}
 \arrow{r}{d_H} \arrow{u}{d_V} \& 
 \begin{array}{l}
\atree3124,\atree3123,\atree3122,\atree3125,\\\atree1001 \atree2123,\atree1011 \atree2121,\atree1011 \atree2124,\\\atree2021 \atree1101,\atree2013 \atree1121,\atree2012 \atree1121,\\\atree2011 \atree1121,\atree1001 \atree1011 \atree1121
\end{array}
  \arrow{r}{d_H} \arrow{u}{d_V} \& 
\begin{array}{l}
\atree3021,\atree3022,\atree3023,\atree3024,\atree3025,\atree3026,\\\atree3027,\atree3028,\atree2021 \atree1001,\\\atree2013 \atree1021,\atree2012 \atree1021,\atree2011 \atree1021
\end{array}
  \arrow{r}{I} \arrow{u}{d_V} \& 
\begin{array}{l}
\atree3021+2\atree3028-\atree2021 \atree1001\\-\atree3024,
\atree3021-\atree3022+\atree3024,\\
\atree3025,\atree3022-\atree2013 \atree1021
\end{array}
 \arrow{u}{\delta_V} \\
0 \arrow{r}{d_H} \arrow{u}{d_V} \& 
\begin{array}{l}
\atree1101\atree2111,\atree1101\atree2112,\\\atree1111\atree2101,\atree1001 \atree1101\atree1111
\end{array}
 \arrow{r}{d_H} \arrow{u}{d_V} \& 
\begin{array}{l}
\atree3111,\atree3113,\atree3114,\atree3115,\\\atree3112,\atree1011 \atree2101,\atree1001 \atree2112,\atree1001 \atree2111,\\\atree2012 \atree1101,\atree2013 \atree1101,\atree2001 \atree1111,\atree2011 \atree1101,\\\atree2002 \atree1111,\atree1001 \atree1011 \atree1101,\atree1001 \atree1001 \atree1111
\end{array}
 \arrow{r}{d_H} \arrow{u}{d_V} \& 
\begin{array}{l}
\atree3019,\atree3018,\atree3017,\atree3016,\atree3014,\atree3015,\\\atree3013,\atree3011,\atree3012,\atree2001 \atree1011,\\\atree2012 \atree1001,\atree2013 \atree1001,\atree2002 \atree1011,\\\atree2011 \atree1001,\atree1001 \atree1001 \atree1011
\end{array}
  \arrow{r}{I} \arrow{u}{d_V} \& 
\begin{array}{l}
  \atree3019,\atree3016,\\\atree3012,\atree2013 \atree1001
\end{array}
  \arrow{u}{\delta_V} \\
0 \arrow{r}{d_H} \arrow{u}{d_V} \& \atree1101 \atree2101 \arrow{r}{d_H} \arrow{u}{d_V} \& 
\begin{array}{l}
\atree3102,\atree3101,\atree1001 \atree2101,\atree2001 \atree1101,\\\atree2002 \atree1101,\atree1001 \atree1001 \atree1101
\end{array}
 \arrow{r}{d_H} \arrow{u}{d_V} \& 
\begin{array}{l}
\atree3001,\atree3002,\atree3003,\atree3004,\\\atree2001 \atree1001,\atree2002 \atree1001,\atree1001 \atree1001 \atree1001
\end{array}
 \arrow{u}{d_V} \arrow{ru}{\delta_V} \& 
\end{tikzcd}$$
}
\end{center}
\caption{The augmented aromatic bicomplex for~$N=3$. The wedges are omitted for conciseness.}
\label{figure:ex_bicomplex_3}
\end{figure}
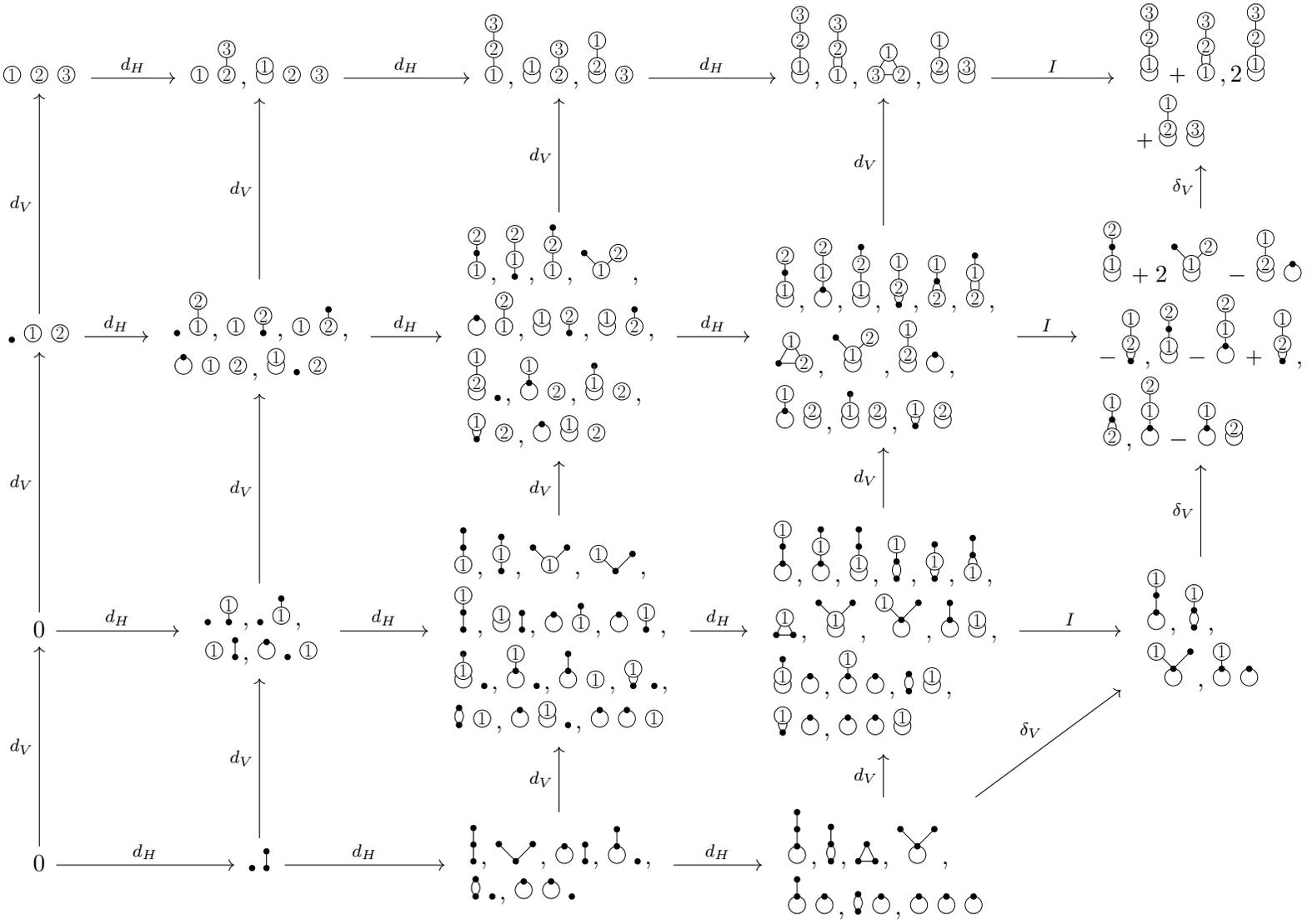

\end{appendices}

\end{document}